\title[The Quantum Variance of the Modular Surface] {The Quantum Variance of the Modular Surface \\
}
\author[\sc P. Sarnak and P. Zhao, Appendix by M. Woodbury]{\sc Peter Sarnak
and Peng Zhao
\\
Appendix by Michael Woodbury}
\date{Feb. 9, 2018}
\address
{Department of Mathematics, Princeton University and IAS,
Princeton, NJ 08540}
\email {sarnak@math.princeton.edu}
\address
{Department of Mathematics, Columbia University,
New York, NY 10027}
\email {woodbury@math.columbia.edu}
\address
{Department of Mathematics, University of Connecticut,
Storrs, CT 06269}
\email {peng.2.zhao@uconn.edu}
\newcommand{\sgn}{\operatorname{sgn}}
\newtheorem{theorem}{Theorem}
\newtheorem{corollary}{Corollary}
\newtheorem{lemma}{Lemma}
\newtheorem{proposition}{Proposition}
\newtheorem{remark}{Remark}
\newcommand{\W}{\mathcal{W}}
\newcommand{\DS}[1]{\pi_{{\rm dis}}^{#1}}
\newcommand{\GL}{\mathrm{GL}}
\newcommand{\SL}{\mathrm{SL}}
\DeclareMathOperator{\vol}{vol}
\newcommand{\PGL}{\mathrm{PGL}}
\newcommand{\SO}{\mathrm{SO}}
\DeclareMathOperator{\Ad}{Ad}
\DeclareMathOperator{\Ind}{Ind}
\newcommand{\abs}[1]{\left| #1 \right|}
\newcommand{\R}{\mathbb R}
\newcommand{\C}{\mathbb C}
\newcommand{\Z}{\mathbb Z}
\newcommand{\A}{\mathbb A}
\newcommand{\smatr}[4]{\left(\begin{smallmatrix} #1 & #2 \\ #3 & #4\end{smallmatrix}\right)}
\newcommand{\matr}[4]{\left(\begin{array}{cc} #1 & #2 \\ #3 & #4\end{array}\right)}
\newcommand{\bs}{\backslash}
\newcommand{\wt}{\widetilde}
\DeclareMathOperator{\re}{Re}
\begin{document}

\maketitle

\begin{abstract}
The variance of observables of quantum states of the Laplacian on the modular surface is calculated in the semiclassical limit. It is shown that this hermitian form is diagonalized by the irreducible representations of the modular quotient and on each of these it is equal to the classical variance of the geodesic flow after the insertion of a subtle arithmetical special value of the corresponding $L$-function.
\end{abstract}

\section{Introduction}

Let  $G=PSL(2,\mathbb{R})$,  $\Gamma=PSL(2,\mathbb{Z})$ and $X=\Gamma\backslash\mathbb{H}$ be the  modular
surface. $X$ is a hyperbolic surface of finite area and it has a large discrete spectrum for the Laplacian (see \cite{He} and \cite{Sarnak0}). The corresponding eigenfunctions can be diagonalized and we denote these Hecke-Maass forms by 
$\phi_j$, $j=1, 2,\cdots$. They are real valued and satisfy
\begin{eqnarray}
\Delta\phi_j+\lambda_j\phi_j=0,\quad T_n\phi_j=\lambda_{j}(n)\phi_j
\end{eqnarray}
and we normalize them by
\begin{eqnarray}
\int_X\phi_j(z)^2 dA(z)=1.
\end{eqnarray}
Here $dA$ is the normalized hyperbolic area form and write $\lambda_j=\frac14+t_j^2$. If $\lambda>0$ then it is known that such a $\phi$ is a cusp form \cite{He}. $\phi_j$ has a Fourier expansion,
\begin{eqnarray}
\phi_j(z)=\sum_{n\neq 0}\frac{c_j(|n|)}{\sqrt{|n|}}W_{0,it_j}(4\pi |n|y)e(nx),
\end{eqnarray}
where $W_{0,it_j}$ is the Whittaker function. $X$ carries a further symmetry induced by the orientation reversing isometry $z\rightarrow -\overline{z}$ of $\mathbb{H}$ and our $\phi_j$'s are either even or odd with respect to this symmetry $r$
\begin{eqnarray}
\phi_j(rz)=\epsilon_j\phi_j(z), \quad \quad \epsilon_j=\pm 1.
\end{eqnarray}
Correspondingly 
\begin{eqnarray}
c_j(n)=\epsilon_jc_j(-n).
\end{eqnarray}

The Iwasawa decomposition of $g\in G$ takes the form 
\begin{eqnarray}
g=n(x)a(y)k(\theta)
\end{eqnarray}
where
$$n(x)=\left(\begin{array}{cc} 1& x\\
0& 1\end{array}\right), a(y)=\left(\begin{array}{cc} y^{\frac12}& 0\\
0& y^{-\frac12}\end{array}\right), k(\theta)=\left(\begin{array}{cc} \cos\theta& \sin\theta\\
-\sin\theta& \cos\theta\end{array}\right).$$
$\mathbb{H}$ may be identified with $G/K$ where $K=SO(2)/(\pm I)$ and then 
$\Gamma\backslash G$ is identified with the unit tangent space or phase space for the geodesic flow on $X$. The objects whose fluctuations we study in this paper are the Wigner distributions $d\omega_j$ on $\Gamma\backslash G$. These are quadratic functionals of the $\phi_j$'s and are given by (see the recent paper \cite{AZ} for a detailed description of these distributions as well as their basic invariance properties),

\begin{eqnarray}d\omega_j=\phi_j(z)\sum_{k\in\mathbb{Z}}\phi_{j,k}(z)e^{-2ik\theta}d\omega\end{eqnarray}
where $$d\omega=\frac{dx dy}{y^2}\frac{d\theta}{2\pi}.$$

Here the $\phi_{j,k}$ are the shifted Maass cusp forms of weight $k$, normalized such that $\|\phi_{j,k}\|_2=1$ by raising and lowering operators, $E_+$ and $E_-$ respectively, where \cite{J94}
$$E_+=e^{-2i\theta}(2iy\frac{\partial}{\partial x}+2y\frac{\partial}{\partial y}+i\frac{\partial}{\partial \theta}),$$
$$E_-=e^{2i\theta}(2iy\frac{\partial}{\partial x}-2y\frac{\partial}{\partial y}+i\frac{\partial}{\partial \theta}).$$ They are eigenfunctions of the Casimir operator $\Omega$, which acts on $C^{\infty}(\Gamma\backslash G)$. 

The basic question concerning the $\omega_j$'s is their behavior in the semi-classical limit $t_j\rightarrow \infty$. Lindenstrauss \cite{Lindenstrauss} and Soundararajan \cite{sou} have shown that for an ``observable'' $\psi\in C(\Gamma\backslash G)$ 
\begin{eqnarray}
\omega_j(\psi)\rightarrow\frac{1}{\vol(\Gamma\backslash G)}\int_{\Gamma\backslash G}\psi(g)dg,\quad \mathrm{as} \quad j\rightarrow \infty
\end{eqnarray}
where $dg$ is normalized Haar measure (i.e. a probability measure), this is the so called ``QUE'' property. 

It is known after Watson \cite{Wat1} and Jakobson \cite{J94} that the generalized Lindel\"of Hypothesis implies that if 
\begin{eqnarray}
\int_{\Gamma\backslash G}\psi(g)dg=0
\end{eqnarray}
then, for $\epsilon>0$
\begin{eqnarray}
\omega_j(\psi)\ll_{\epsilon}t_j^{-\frac12+\epsilon}
\end{eqnarray}

For the rest of the paper we will assume that the mean value of $\psi$ is 0, i.e. (9) holds. The main result below is the determination of the quantum variance, namely the mean-square of the $\omega_j(\psi)$'s. These are computed for special observables (ones depending only on $z\in X$) in \cite{Luo2} where the $\phi_j$'s are replaced by holomorphic forms, and in \cite{Z} for the $\omega_j$'s at hand. The extension to the general observable that is carried out here is substantially more complicated and intricate. It comes with a reward in that the answer on the phase space is conceptually much more transparent and elegant.

The variance sums
\begin{eqnarray}
S_{\psi}(T):=\sum_{t_j\leq T}|\omega_j(\psi)|^2
\end{eqnarray}
were introduced by Zelditch who showed (in much greater generality) that $S_{\psi}(T)=O(\frac{T^2}{\log T})$ \cite{Ze}. Corresponding to (10) we expect that in our setting $S_{\psi}(T)$ will be at most $T^{1+\epsilon}$, since by Weyl's law \cite{Sel}, $\sum_{t_j\le T}1\sim \frac{T^2}{12}$.  To each $\phi_j$ is associated its standard $L$-function $L(s,\phi_j)$ as well as its symmetric-square $L$-function, $L(s, \mathrm{sym}^2\phi_j)$. These and the other $L$-functions $L(s,\pi)$ that arise below have analytic continuations to $\mathbb{C}$ with a functional equation relating $s$ to $1-s$. Our notation is that $L(s,\pi)$ is the finite part and $\Lambda(s.\pi)$ the completed $L$-function. While $L(1,\pi)$ is nonzero and depends mildly on $\pi$, $L(\frac12,\pi)$ is a very subtle and much studied arithmetical invariant. For technical as well as arithmetical reasons it is natural to include weights in the variance sums (11). The ``harmonic'' weights $L(1,\mathrm{sym}^2\phi_j)$ satisfy $$t_j^{-\epsilon}\ll_{\epsilon}L(1,\mathrm{sym}^2\phi_j)\ll_{\epsilon}t_j^{\epsilon},$$ for $\epsilon>0$ (\cite{hl}, \cite{Iw90}) and they have a limiting distribution (\cite{Luo6}). In the end we can remove these harmonic weights as we do in Section 5 but for now we include them.

\begin{theorem}
Denote by $A_0(\Gamma\backslash G)$ the space of smooth right $K$-finite functions on $\Gamma\backslash G$ which are of mean 0 and of rapid decay. There is a sesquilinear form $Q$ on $A_0(\Gamma\backslash G)\times A_0(\Gamma\backslash G)$ such that
\begin{eqnarray}
& &
\lim_{T\rightarrow\infty}\frac1T\sum_{t_j\leq T}L(1,\mathrm{sym}^2\phi_j)\omega_j(\psi_1)\overline{\omega}_j(\psi_2)= Q(\psi_1,\psi_2).
\end{eqnarray}
\end{theorem}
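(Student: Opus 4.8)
The plan is to reduce the quantum variance to a spectral sum over the automorphic spectrum of $\Gamma\backslash G$ via a period-formula identity, and then to extract the limit using a smoothed Weyl law together with subconvexity-type bounds that control the off-diagonal. First I would fix $\psi_1,\psi_2 \in A_0(\Gamma\backslash G)$ and, using right $K$-finiteness, decompose each into finitely many isotypic pieces $\psi_\ell = \sum_k \psi_{\ell,k}$ of pure $K$-weight; since $\omega_j(\psi)$ pairs the weight-$k$ part of $\psi$ with $\phi_j \cdot \phi_{j,-k}$, it suffices to understand, for each fixed pair of weights, the triple-product periods $\langle \psi_{1,k}, \phi_j \overline{\phi_{j,k'}} \rangle$. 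The key input here is the Watson/Ichino-type formula expressing $|\langle \Psi, \phi_j \otimes \overline{\phi_j}\rangle|^2$ (for $\Psi$ a fixed automorphic form, Eisenstein or cuspidal, generating an irreducible representation) in terms of a central $L$-value $L(\tfrac12, \Psi \times \phi_j \times \phi_j)$ times explicit archimedean factors. Spectrally expanding $\psi_1$ and $\overline{\psi_2}$ along an orthonormal basis of $L^2(\Gamma\backslash G)$ — cusp forms $\Psi_m$ of varying weight, the constant function (killed by the mean-zero hypothesis), and the continuous spectrum $E(\cdot, \tfrac12 + it)$ — turns $\sum_{t_j \le T} \omega_j(\psi_1)\overline{\omega_j(\psi_2)}$ into a sum over $m$ (and an integral over $t$) of $\langle \psi_1, \Psi_m\rangle \overline{\langle \psi_2, \Psi_m\rangle}$ weighted by $\sum_{t_j \le T} h(t_j) L(\tfrac12, \Psi_m \times \phi_j \times \phi_j)$ for appropriate archimedean weights $h$ depending on the weights of $\psi_1,\psi_2$ and on $\Psi_m$.

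Next I would evaluate the inner sum $\sum_{t_j \le T} h(t_j)\, L(\tfrac12, \Psi_m \times \phi_j^2)$ asymptotically in $T$. Opening the $L$-function by its approximate functional equation and unfolding the sum over $j$ via the Kuznetsov trace formula converts this into a main (diagonal) term plus Kloosterman-sum contributions. The diagonal term produces, for each fixed $\Psi_m$, a constant multiple of $T$ — the constant being precisely the arithmetic special $L$-value advertised in the abstract, namely (a normalized value of) $L(\tfrac12, \Psi_m)$ or $L(\tfrac12, \mathrm{sym}^2 \phi_j \times \Psi_m)$-type quantity averaged out — times the classical archimedean variance integral coming from the weight functions $h$; this is where the ``classical variance of the geodesic flow times a special $L$-value'' structure emerges, and it defines $Q(\psi_1,\psi_2)$ as $\sum_m (\text{special value}) \langle \psi_1,\Psi_m\rangle\overline{\langle\psi_2,\Psi_m\rangle} + (\text{continuous analogue})$. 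I would then define $Q$ by this absolutely convergent spectral expansion — convergence being guaranteed by the rapid decay of $\psi_1,\psi_2$, which forces their spectral coefficients $\langle \psi_\ell, \Psi_m\rangle$ to decay faster than any polynomial in the Laplace eigenvalue of $\Psi_m$, dominating the polynomial growth of the special values and archimedean factors.

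The main obstacle — and the bulk of the technical work — is showing that the off-diagonal Kloosterman contributions, and the tail of the spectral sum over $\Psi_m$ with large parameter, are $o(T)$ uniformly enough to interchange limits. This requires: (i) a subconvex or at least Lindelöf-on-average bound for $L(\tfrac12, \Psi_m \times \phi_j^2)$ as $j$ varies, obtained by estimating the Kloosterman terms from Kuznetsov with care about the oscillation of the Bessel/archimedean transforms; (ii) uniformity of these bounds in the parameter of $\Psi_m$ (spectral or Eisenstein), so that summation against the rapidly decaying coefficients of $\psi_1,\psi_2$ is legitimate and the error survives the sum; and (iii) separately handling the continuous-spectrum contribution, where the relevant $L$-value degenerates to $|\zeta(\tfrac12 + it)|^2$-type factors and one must invoke mean-value estimates for the zeta function on the critical line. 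A secondary subtlety is that the archimedean weight functions $h = h_{k,k'}$ arising from the Whittaker-function integrals are not positive and not compactly supported, so the stationary-phase analysis producing the classical variance integral must be done with explicit control of error terms; matching the resulting archimedean integral with the classical variance of the geodesic flow (as computed, e.g., via the decomposition of $L^2$ of the flow into irreducibles) is the conceptual payoff that makes the final form of $Q$ transparent.
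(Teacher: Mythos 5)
There is a genuine gap, and it sits at the very first step of your reduction. When you spectrally expand $\psi_1$ and $\overline{\psi_2}$ over an orthonormal basis $\{\Psi_m\}$ of automorphic forms, the product $\omega_j(\psi_1)\overline{\omega_j(\psi_2)}$ becomes a \emph{double} sum $\sum_{m,m'}\langle\psi_1,\Psi_m\rangle\overline{\langle\psi_2,\Psi_{m'}\rangle}\,\omega_j(\Psi_m)\overline{\omega_j(\Psi_{m'})}$. The Watson--Ichino formula only expresses the absolute value squared $|\omega_j(\Psi_m)|^2$ (the case $m=m'$) as a central $L$-value; for $m\neq m'$ with $\Psi_m,\Psi_{m'}$ generating distinct irreducibles, the product $\omega_j(\Psi_m)\overline{\omega_j(\Psi_{m'})}$ is a product of two periods whose individual signs/phases are not determined by any $L$-value, so your ``sum over $m$ weighted by $\sum_{t_j\le T}h(t_j)L(\frac12,\Psi_m\times\phi_j\times\phi_j)$'' silently discards these cross terms. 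They cannot be discarded: Cauchy--Schwarz only bounds each cross term by $O(T)$, the same order as the main term, so one needs genuine cancellation in $j$ coming from sign changes of the periods. Establishing that cancellation is precisely the content of the diagonalization of $Q$ (Theorem 2 of the paper), which is proved via the Hecke self-adjointness $Q(T_p\xi,\eta)=Q(\xi,T_p\eta)$ and multiplicity one --- and that argument in turn \emph{requires} the explicit Kloosterman-sum expression for $Q$ obtained from the direct proof of Theorem 1. Your proposal therefore assumes the hardest output of the paper (the spectral diagonalization) in order to prove its first theorem.

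The paper's actual route avoids this circularity: it realizes the observables as incomplete Poincar\'e series $P_{h,m,2k}$, unfolds $\omega_j(P_{h,m,2k})$ into sums of Hecke eigenvalues $\lambda_j(q^2+qm/d)$ against explicit Whittaker--Bessel archimedean weights, inserts the harmonic weights $u(t_j/T)L(1,\mathrm{sym}^2\varphi_j)$, and applies Kuznetsov directly to the resulting bilinear sum. Note two further points of divergence from your sketch: (i) in this direct computation the off-diagonal Kloosterman terms are \emph{not} error terms --- after Poisson summation and stationary phase they contribute a second piece of size $T$ to the main term (the $S_c$-sum in formula (34)), so the limit exists only because diagonal and off-diagonal are evaluated together; your expectation that ``the diagonal term produces the main term'' is only correct in the later eigenvalue computation (Proposition 6), not in the existence proof; (ii) the harmonic weights must be removed at the end by the mollifier/short-Dirichlet-series argument of Section 5, a step your outline omits. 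The Watson--Ichino machinery you describe is exactly what the paper uses, but only in Section 4, after orthogonality has been established, to evaluate $Q$ on each single irreducible.
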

We call $Q$ the quantum variance. The proof of Theorem 1 proceeds by proving the existence of the limit which comes with  an explicit but formidable expression for $Q$ see (34) of section 2. It involves infinite sums over arithmetic-geometric terms (twisted Kloosterman sums) and it appears very difficult to read any properties of $Q$ directly from (34). For example even that $Q$ is not identically zero (which is the case so that the exponent of $T$ in the theorem is the correct one) is not clear. Using some apriori invariance properties of $Q$ as well as some others that are derived from special cases of general versions of the daunting expression (34) allows us to eventually
 diagonalize $Q$.

In order to describe the result we need some more notation. The fluctuations of an observable $\psi\in C_0(\Gamma\backslash G)$ under the classical motion $\mathcal{G}_t$ by geodesics was determined in  \cite{Ra} and \cite{c}, and it asserts that as $T$ goes to infinity,
\begin{eqnarray}
  \frac{1}{\sqrt{T}}\int_0^T \psi(\mathcal{G}_t(g))dt
\end{eqnarray}
 as a random variable on $ \Gamma\backslash G$ becomes Gaussian with mean zero and variance $V$ given by
\begin{eqnarray}
\quad\quad V(\psi_1,\psi_2)=\int_{-\infty}^{\infty}\int_{\Gamma\setminus
G}\psi_1\left(g\left(\begin{array}{cc} e^{\frac{t}{2}}
& 0\\
0& e^{-\frac{t}{2}}\end{array}\right)\right)\overline{\psi_2(g)}dg
dt.
\end{eqnarray}

Note that (14) converges due to  the rapid decay of correlations for the geodesic flow. The correspondence principle suggests, and it has been conjectured in \cite{ef}, that for chaotic systems such as the one at hand, the quantum fluctuations are also Gaussian with a variance which agrees with the classical one in (14).

The distributions $\omega_j$ enjoy some invariance properties that are inherited by $Q$ and which are critical for its determination. The first is that $\omega_j$ is asymptotically invariant under time reversal, see Section 3. 
If $w$ is the involution of $\Gamma\backslash G$ given by 
\begin{eqnarray}
\Gamma g\rightarrow \Gamma g\left(\begin{array}{cc} 0
& 1\\
-1& 0\end{array}\right)
\end{eqnarray}
then 
\begin{eqnarray}
Q(w\psi_1,\psi_2)=Q(\psi_1,w\psi_2)=Q(\psi_1,\psi_2)
\end{eqnarray}

The second symmetry is special to $X$ and follows from (4);
\begin{eqnarray}
r\omega_j=\omega_j,  \quad Q(r\psi_1,\psi_2)=Q(\psi_1,r\psi_2)=Q(\psi_1,\psi_2)
\end{eqnarray}
So if the quantum variance is to be compared with the classical variance then it should be to the symmetrized form 
 \begin{eqnarray}
 V^{\mathrm{sym}}(\psi_1,\psi_2):=V(\psi_1^{\mathrm{sym}},\psi_2^{\mathrm{sym}})
 \end{eqnarray}
where 
 \begin{eqnarray}
\psi^{\mathrm{sym}}:=\frac 14\sum_{h\in H}h\psi
 \end{eqnarray}
for $H=\{1,w,r,wr\}$.

These same symmetries arose in connection with the arithmetic measures on $\Gamma\backslash G$ studied in \cite{Luo0}. In fact the arithmetic variance $B$ introduced in that paper turns out as we will show, to be very close to our quantum variance $Q$. We employ freely some of the techniques and notations in \cite{Luo0}.

The classical variance $V$ is diagonalized by the decomposition of $L_{\mathrm{cusp}}^2(\Gamma\backslash G)$ into irreducible representations under right translations by $G$. For simplicity we will restrict ourselves to examining $Q$ on $L_{\mathrm{cusp}}^{2}(\Gamma\backslash G)$, the continuous spectrum can be investigated similarly. We have 
 \begin{eqnarray}
L_{\mathrm{cusp}}^2(\Gamma\backslash G)=\bigoplus_{j=1}^{\infty}W_{\pi_j},
\end{eqnarray}
where $W_{\pi_j}$'s are irreducible cuspidal automorphic representations, each also invariant under the Hecke algebra. The $\pi_j$'s come in two types, the discrete series $W_{\pi_j^k}$, $k$ even, $j=1,2,\cdots, d_k$, $d_k$ being the dimension of the space of holomorphic  and antiholomorphic forms of weight $k$, and the spherical representations $\pi_j^0$ (see \cite{Luo0}). Thus
\begin{eqnarray}
L_{cusp}^2(\Gamma\backslash G)&=&\sum_{j=1}^{\infty}W_{\pi_j^0}\oplus\sum_{k\geq12}\sum_{j=1}^{d_k}\left(W_{\pi_j^k}\oplus W_{\pi_j^{-k}}\right)\nonumber\\
&:=&\sum_{j=1}^{\infty}U_{\pi_j^0}\oplus\sum_{k\geq12}\sum_{j=1}^{d_k}U_{\pi_j^k}\label{decomp}
\end{eqnarray}
where $d_k$ is either $[k/12]$ or $[k/12]+1$ depending if $k/2=1$ mod 6 or not. 


We can finally state our main result,
\begin{theorem}
Both $V^{\mathrm{sym}}$ and $Q$ are diagonalized by the orthogonal decomposition (21) and on each summand $U_{\pi_j^k}$, we have
\begin{eqnarray}
Q|_{U_{\pi_j^k}}=L(\frac12,\pi_j^k)V^{\mathrm{sym}}|_{U_{\pi_j^k}}.
\end{eqnarray}
\end{theorem}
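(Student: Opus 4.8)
The plan is to prove Theorem 2 by combining three ingredients: the explicit formula (34) for $Q$ together with an analysis of its behavior under the Hecke and $G$-actions; the parallel structure of the classical variance $V^{\mathrm{sym}}$, which is already known to be diagonalized by (21) with explicit diagonal entries (essentially Rankin--Selberg type integrals / the formulas in \cite{Luo0}); and the Watson-type formula identifying the relevant triple-product period with $L(\tfrac12,\pi_j^k)$. First I would establish that $Q$ respects the decomposition (21): the invariance $Q(w\psi_1,\psi_2)=Q(\psi_1,\psi_2)$ and $Q(r\psi_1,\psi_2)=Q(\psi_1,\psi_2)$ from (17)--(18), together with the fact that each $\omega_j$ is an eigendistribution of the Hecke operators and transforms under $K$ in a controlled way (each $\omega_j$ is a sum of weight-$2k$ pieces), forces $Q(\psi_1,\psi_2)=0$ whenever $\psi_1\in U_{\pi}$ and $\psi_2\in U_{\pi'}$ lie in inequivalent summands. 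Here I would use that the $U_{\pi_j^k}$ are precisely the common eigenspaces of the Hecke algebra together with the Casimir $\Omega$, and that $Q$ is built from the $\omega_j(\psi)$'s which are Hecke-equivariant; any off-diagonal pairing would produce, in the limiting sum over $t_j$, an oscillating arithmetic factor that cannot survive — concretely this is read off from the shape of (34) as a (regularized) sum of products of Hecke eigenvalues against a fixed kernel, which is orthogonal across distinct $\pi$'s.

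Second, having reduced to a single summand $U_{\pi_j^k}$, both $Q$ and $V^{\mathrm{sym}}$ restrict to scalar multiples of the same $G$-invariant (or, on a discrete series piece, weight-respecting) Hermitian pairing on that irreducible space, simply because an irreducible unitary representation carries a one-dimensional space of invariant Hermitian forms up to scalar, and both $Q$ and $V^{\mathrm{sym}}$ are such forms by the invariance properties already recorded (for $Q$) and by (15) (for $V$). So it remains to compute the single ratio
\begin{equation}
c_{\pi_j^k}\;:=\;\frac{Q|_{U_{\pi_j^k}}}{V^{\mathrm{sym}}|_{U_{\pi_j^k}}}
\end{equation}
and to show $c_{\pi_j^k}=L(\tfrac12,\pi_j^k)$. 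This I would do by evaluating both forms on one convenient test vector in $U_{\pi_j^k}$ — the natural choice is (an appropriate shift of) the form $\psi$ attached to $\pi_j$ itself, so that $\omega_i(\psi)$ becomes a triple product $\langle \phi_i\, \overline{\phi_i},\, \psi\rangle$. Then $\frac1T\sum_{t_i\le T}|\omega_i(\psi)|^2$ is, up to the known archimedian and combinatorial factors, an average of $|L(\tfrac12,\pi_i\times\pi_i\times\pi_j)|$-type quantities; by Watson's formula this triple-product $L$-value factors as $L(\tfrac12,\pi_j)\,L(\tfrac12,\mathrm{sym}^2\pi_i\times\pi_j)$ divided by symmetric-square factors, and the mean value over $t_i\le T$ of the second factor (together with the local weights) reproduces exactly the classical variance integral (14) after symmetrization — this last step is the moment computation already carried out in the scalar case in \cite{Z}, \cite{Luo2}, extended here to general weight. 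Matching constants then isolates the clean factor $L(\tfrac12,\pi_j^k)$.

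The main obstacle I expect is the second half: extracting from the formidable expression (34) — twisted Kloosterman sums summed against the kernel — the precise main term and showing it equals $L(\tfrac12,\pi_j)$ times the classical variance, with all archimedian/gamma-factor bookkeeping correct across all weights $k$ simultaneously. The off-diagonal vanishing (first step) is comparatively soft, following from representation-theoretic orthogonality once the Hecke-equivariance of $Q$ is made precise; and the "one-dimensionality of invariant forms" reduction is standard. But the constant $L(\tfrac12,\pi_j)$ is, as the introduction stresses, invisible in (34) directly, so it must be coaxed out via the triple-product/Watson identity on a well-chosen vector and a Rankin--Selberg unfolding, and the delicate point is verifying that the non-arithmetic part of the resulting average is *exactly* $V^{\mathrm{sym}}$ and not merely proportional to it — i.e., pinning down the normalizing constant to be $1$. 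I would handle this by testing the identity against a second, independent vector (or against the classical limit where QUE degenerates to equidistribution) to fix the constant, and by invoking the apriori invariance of $Q$ under the full symmetry group $H$ to rule out spurious scalar ambiguities.
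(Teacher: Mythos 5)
Your overall architecture agrees with the paper's: off-diagonal vanishing via Hecke considerations, reduction to a scalar on each $U_{\pi_j^k}$ by a multiplicity-one statement for invariant Hermitian forms, and evaluation of that scalar by testing on a generating Hecke eigenform, invoking Watson/Ichino so that the triple product value factors as $\Lambda(\frac12,\pi_j^k)\Lambda(\frac12,\pi_j^k\otimes\mathrm{sym}^2\varphi_i)$ and averaging the second factor by the approximate functional equation and Kuznetsov. The genuine gap is in your first step, which you call ``comparatively soft.'' It is not, and the mechanism you propose fails: the Wigner distributions $\omega_j$ are \emph{not} eigendistributions of the Hecke operators acting on the observable. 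Since $\omega_j$ is built from the products $\phi_j\bar{\phi}_{j,k}$ and $T_n$ is not multiplicative on products, $\omega_j(T_n\psi)$ is not a Hecke eigenvalue times $\omega_j(\psi)$; there is no term-by-term ``oscillating arithmetic factor,'' and the orthogonality across distinct $\pi$'s cannot be read off from (34) by inspection. What is true --- and what the paper must prove --- is the self-adjointness $Q(T_p\xi,\eta)=Q(\xi,T_p\eta)$ of the \emph{limiting} form only. This occupies most of Section 3: one applies $T_p$ to the Poincar\'e series via $T_nP_{m,k}=\sum_{d\mid(m,n)}(n/d)^{k-1}P_{mn/d^2,k}$ and then verifies a long case-by-case list of identities among the twisted Kloosterman sums $S_c$ appearing in the explicit formula (the paper likens this to a fundamental-lemma comparison). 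Only after this does multiplicity one for cusp forms on $GL_2$ give the vanishing of $Q$ on inequivalent summands.

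Two smaller omissions. First, for the reduction on a single summand you invoke ``$G$-invariant'' pairings, but $Q$ is not $G$-invariant; it is invariant only under the diagonal geodesic flow together with $w$ and $r$, and it is the Luo--Rudnick--Sarnak uniqueness statement for forms with exactly these invariances that applies. Moreover the geodesic-flow invariance of $Q$ is itself not free: since the variance lives at a scale finer than the individual bound (10), one needs the Anantharaman--Zelditch expansion (36), together with the quantum ergodicity bound (39) and partial summation, to see that the error terms do not contribute at scale $T$. Second, the explicit limit of Proposition 1 carries the harmonic weights $L(1,\mathrm{sym}^2\varphi_j)$, so an unweighting argument (Section 5, via short Dirichlet polynomials and an almost-orthogonality estimate) is required before one reaches the unweighted statement of Theorem 2; your proposal does not address this. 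Your suggestion of fixing the overall constant by testing a second vector is unnecessary in the paper's treatment, where the normalizations $\langle f,f\rangle=1$ and the explicit archimedean factors of the Appendix pin the constant directly.
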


\begin{remark}
The precise meaning in Theorem 2 is that it holds when evaluated on any $\psi_1, \psi_2$ in $L_{cusp}^2(\Gamma\backslash G)\cap A_0(\Gamma\backslash G)$.
\end{remark}
\begin{remark}
The theorem asserts that the quantum variance is equal to the classical variance after inserting the ``correction factor" of $L(\frac12,\pi)$ on each irreducible subspace. As we have noted $Q$ is very close to the arithmetic variance $B$ in \cite{Luo0}. Comment (1.4.6) of that paper indicates heuristically why one might expect this to be so. However our proof that these Hermitian forms are essentially the same goes through a very different route.
\end{remark}
\begin{corollary} 
On removing the harmonic weights in (12) the resulting normalization constant in (22) for the variance is multiplied by a further positive number $C(\pi)$, which is a product of local densities;
$$C(\pi)=\frac{1}{\zeta(2)}\prod_p\left(1-\frac{\lambda_{\pi}(p)}{p^{3/2}(1+p^{-1})}\right)$$
where $\lambda_{\pi}(p)$ is the (normalized) eigenvalue of the Hecke operator $T_p$ on $\pi$.
\end{corollary}
We outline briefly the proofs of Theorem 1 and 2 and the contents of the  paper. Section 2 is devoted to the proof of Theorem 1. The variance sums are studied for functions in $A_0(\Gamma\backslash G)$, all of which are realized by Poincare series. 
The harmonic weight facilitates the use of the Petersson-Kuznetzov formula and the weights are only removed at the end. This technique was introduced in \cite{Luo7} and used in subsequent investigations \cite{J97}, \cite{Luo2} and \cite{Z} with progressively more complicated answers. The present case is given in Section 2 equation (34) and is (as we have noted) very complicated. We have to pass through versions of it as it is the only way that we know of proving the existence of the limit at this scale and we also need to use these formulae later to prove (23) below.

The rest of the paper, Sections 3 and 4 are concerned with diagonalizing $Q$. A key role is played by the asymptotic invariance of $\omega_j$ under the geodesic flow $\mathcal{G}_t$ on $\Gamma\backslash G$. This alone does not suffice to get the corresponding invariance property for $Q$, since we are working at the level slightly sharper than the bounds (10). To this end the recent results of Anantharaman and Zelditch \cite{AZ} clarify the exact error terms in the invariance properties of $\omega_j$ under $\mathcal{G}_t$. This together with well known multiplicity one results for linear functionals on irreducible representations of $G$, which are $\mathcal{G}_t$, $w$ and $r$ invariant, reduce the determination of $Q$ to $Q(\xi, \eta)$, where $\xi$ and $\eta$ are vectors which generate the irreducible $\pi_j^k$ and $\pi_{j'}^{k'}$ respectively (see \cite{Luo0}). If $\pi_j^k\neq\pi_{j'}^{k'}$, we need to show that $Q(\xi,\eta)=0$. This is done by establishing a self-adjointness property of $Q$ with respect to the finite Hecke operators $T_p$. Namely that for such $\xi$ and  $\eta$,
\begin{eqnarray}
Q(T_p\xi,\eta)=Q(\xi,T_p\eta)
\end{eqnarray}
The proof of this is given in Propositions 4 and 5 and requires one to prove several of identities for the corresponding twisted Kloosterman sums. This is similar to the analysis in applications of the trace formula to prove spectral identities, after comparisons of orbital integrals (the fundamental lemma as it is known in general). With (23) the vanishing of $Q(\xi,\eta)$, when $\pi_j^k\neq\pi_{j'}^{k'}$ follows from the multiplicity one theorem for automorphic cusp forms on $GL_2$. Finally when $\pi_j^k=\pi_{j'}^{k'}$ the sum (12) may be analyzed using Watson's triple product formula \cite{Wat1} and its generalization by Ichino \cite{ichino} together with techniques from averaging special values of $L$-functions over families. One needs an explicit form of these triple product identities for forms which are ramified at infinity. This is provided in Appendix A.  This  leads to the explicit evaluation of $Q(\xi,\eta)$, and in particular it introduces the magic factor of $L(\frac 12, \pi)$. Finally in Section 5, we remove the  harmonic weights and derive Corollary 1.

\section{Poincar\'{e} Series}
In this section we calculate the quantum variance sum of the weight $2k$
incomplete Poincar\'{e} series against $d\omega_j$ on $\Gamma\backslash G$.

 Let $h(t)$ be a smooth function on $(0,\infty)$ with compact support. 
On $C^{\infty}(0,\infty)$, define $\|\cdot\|_A$ by 
$$\| h\|_A=\max_{\substack{0\le i\le A , t\in(0,\infty)\\-A\le j\le A}}\Big|\frac{h^i(t)}{t^j}\Big|$$
For $m\in\mathbb{Z}$, define the incomplete Poincar\'{e} series of weight $-2k$:
$$P_{h,m,2k}(z,\theta)=e^{2ik\theta}\sum_{\gamma\in\Gamma_\infty\backslash\Gamma}h(y(\gamma z))(\epsilon_{\gamma}(z))^{2k}e(mx(\gamma
z)),$$
where \(\displaystyle\epsilon_{\gamma}(z)=\frac{cz+d}{|cz+d|}\) for \(\displaystyle\gamma=\left(\begin{array}{cc} *& *\\
c& d\end{array}\right)\).
For $m=0$, it becomes the incomplete Eisenstein series of the same weight.

On $\Gamma\backslash G$, define the Wigner distributioon $$d\omega_j=\varphi_j(z)\sum_{k\in\mathbb{Z}}\varphi_{j,k}(z)e^{-2ik\theta}d\omega$$
where $$d\omega=\frac{dx dy}{y^2}\frac{d\theta}{2\pi}.$$
 $\varphi_j$ is the $j$-th Hecke-Maass eigenform with the
corresponding Laplacian eigenvalue \(\displaystyle\lambda_j=\frac{1}{4}+t_j^2\),
Hecke eigenvalues $\lambda_j(n)$ and we normalize
$\|\varphi_j\|_2=1$. $\varphi_{j,k}(z)$ are shifted Maass cusp forms of weight $2k$, $\varphi_{j,k}(z)e^{-2ik\theta}$ is an eigenfunction of Casimir operator 
$$\Omega=y^2\left(\frac{\partial^2}{\partial
x^2}+\frac{\partial^2}{\partial y^2}\right)+y\frac{\partial^2}{\partial x\partial \theta}=\Delta+y\frac{\partial^2}{\partial x\partial \theta}$$
with the same eigenvalue \(\displaystyle \frac 14+t_j^2\) for every $k$. ($\Omega$ acts as $\Delta_{2k}=\Delta-2iky\frac{\partial}{\partial x}$ on weight $2k$ forms.)

We fix an even function $u(t)$ be analytic in the strip $|\mathrm{Im}t|<\frac12$ and  real analytic  on $\mathbb{R}$
satisfying $u^{(n)}(t)\ll(1+|t|)^{-N}$ for any $n>0$ and  large
$N$, and $u(t)\ll t^{N}$ when $t\rightarrow0$, for arbitrarily large $N$. And we assume $\int_{\mathbb{R}}u(t) dt=1.$

We have the following
\begin{proposition} \label{prop:1} For $h_1$, $h_2\in C_c^{\infty}(0,\infty)$, $m_1, m_2, k_1, k_2\in\mathbb{Z}$, and $P_{h_1,m_1,2k_1}$, $P_{h_2,m_2,2k_2}$ satisfying (9), there is a sesquilinear form $Q$ as in Theorem 1, such that 
\begin{eqnarray}
&&\lim_{T\rightarrow\infty} \frac1T\sum_{j\ge 1}u\left(\frac{t_j}{T}\right)L (1,\mathrm{sym}^2\varphi_j)\omega_j(P_{h_1,m_1,2k_1})\overline{\omega}_j(P_{h_2,m_2,2k_2})
\nonumber\\
&=& Q(P_{h_1,m_1,2k_1},P_{h_2,m_2,2k_2}).\nonumber
\end{eqnarray}

Moreover, there is a constant A and C (depending on $k_1$, $k_2$) such that the sesquilinear form Q satisfies
$$|Q(P_{h_1,m_1,2k_1},P_{h_2,m_2,2k_2})|\le C((|m_1|+1)(|m_2|+1))^A\|h_1\|_A\|h_2\|_A.$$
\end{proposition}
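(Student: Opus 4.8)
The plan is to unfold the quantum variance sum via the Petersson–Kuznetsov formula and track the main term and error terms explicitly. First I would open up the Wigner distribution: since $\varphi_{j,k}$ is obtained from $\varphi_j$ by raising/lowering operators, the pairing $\omega_j(P_{h_1,m_1,2k_1})$ becomes an integral over $\Gamma\backslash G$ of the incomplete Poincar\'e series against $\varphi_j \bar\varphi_{j,k_1}$, which by standard unfolding against $\Gamma_\infty\backslash\Gamma$ reduces to a Fourier-coefficient expression: a Mellin/Fourier transform of $h_1$ against the product of Whittaker functions, multiplied by the Hecke eigenvalue $\lambda_j(m_1)$ times the Fourier coefficient $c_j(1)$ (or $c_j$ of the relevant shift). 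The weight $L(1,\mathrm{sym}^2\varphi_j)$ is exactly the factor that turns $|c_j(1)|^2$ into the harmonic weight appearing in the Kuznetsov formula, so after inserting it the sum over $j$ with the test function $u(t_j/T)$ is in the right shape to apply Kuznetsov (in its weight-$2k$ or spectral form, summing over the full cuspidal and Eisenstein spectrum on the appropriate congruence data).

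Second, after applying Kuznetsov, the diagonal term contributes the main term: it produces an integral of the form $\int u(t) \,(\text{archimedean transform of }h_1,h_2)\, dt$ times a delta in the Hecke/Fourier indices, and after dividing by $T$ and letting $T\to\infty$ this converges — by the rapid decay and normalization of $u$ — to a finite limit which I would take as the definition of $Q(P_{h_1,m_1,2k_1},P_{h_2,m_2,2k_2})$. The off-diagonal term is an infinite sum of (twisted) Kloosterman sums weighted by Bessel-type transforms of $h_1, h_2$ and of $u$; here I would show, using Weil's bound for Kloosterman sums together with the decay of the Bessel transforms coming from the smoothness and support of $h_1,h_2$ and the concentration of $\hat u$, that this contribution is $o(T)$ (indeed a power saving), so it does not affect the limit. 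This is the same mechanism used in \cite{Luo7}, \cite{Luo2}, \cite{Z}; the novelty is only the extra $\theta$-integration and the weight shifts $k_1, k_2$, which enter the archimedean integrals but not the arithmetic of the Kloosterman sums.

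For the quantitative bound on $Q$, I would simply keep track of the dependence on $m_1, m_2, k_1, k_2$ and the Sobolev-type norms $\|h_i\|_A$ throughout. The unfolding introduces $\lambda_j(m_i)$, whose average size over the family is controlled (Rankin–Selberg / the $L(1,\mathrm{sym}^2)$ weight again), giving polynomial growth $(|m_i|+1)^A$; the archimedean transforms of $h_i$ are bounded by $\|h_i\|_A$ times a fixed function of $k_i$ once $A$ is large enough, using integration by parts in the Mellin/Bessel integrals; and the $t$-integral against $u$ converges absolutely. Collecting these gives the stated inequality
$$|Q(P_{h_1,m_1,2k_1},P_{h_2,m_2,2k_2})|\le C\big((|m_1|+1)(|m_2|+1)\big)^A\|h_1\|_A\|h_2\|_A$$
with $A,C$ depending only on $k_1,k_2$. (Note the typo in the statement: the second factor should be $|m_2|+1$.)

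The main obstacle, as the authors themselves flag, is bookkeeping rather than a single hard estimate: the explicit form of $Q$ emerging from Kuznetsov is ``formidable'' — an infinite sum over moduli of twisted Kloosterman sums against products of Bessel-transforms — and the delicate point is to organize the off-diagonal analysis so that the $T^{-1}$-normalized sum genuinely converges (one is working at the critical scale where Zelditch's $T^2/\log T$ and the conjectural $T^{1+\epsilon}$ meet), and simultaneously to extract uniform-in-$(m_i,k_i,h_i)$ control. The convergence of the limit is precisely what forces one to pass through these versions of the formula, and the same expressions must survive intact because they are reused in Sections 3–4 to establish the Hecke self-adjointness (23); so the estimates must be done in a form robust enough for that later use, not merely strong enough to prove existence of the limit here.
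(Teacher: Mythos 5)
Your overall framework --- unfold the Poincar\'e series against $\Gamma_\infty\backslash\Gamma$, write $\omega_j(P_{h,m,2k})$ as $\frac{1}{L(1,\mathrm{sym}^2\varphi_j)}\sum_{d\mid m}\sum_q \lambda_j(q^2+qm/d)\,\tilde H(t_j,d,q,m)$ using $|c_j(1)|^2=2/L(1,\mathrm{sym}^2\varphi_j)$, and then apply Kuznetsov to the $j$-sum --- is exactly the paper's route, and your remark that the weights $k_1,k_2$ only affect the archimedean integrals is correct (the paper handles these by the recurrence $A_{k+1}(s)=-2kA_k(s)+2A_k(s+1)-[(k-\tfrac12)^2+t_j^2]A_{k-1}(s)$ and induction on $k$). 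But there is a decisive error in your treatment of the off-diagonal. You assert that the Kloosterman contribution is $o(T)$ (indeed a power saving) by Weil's bound plus decay of the Bessel transforms, so that $Q$ is defined by the diagonal term alone. This is false, and it is the crux of the proposition. The frequencies are $n_i=q_i(q_i+m_i/d_i)$ with $q_i\asymp t_j\le T$, so the Bessel argument $4\pi\sqrt{n_1n_2}/c$ is of size $T^2/c$ and the transform $\int J_{2it}(x)\,t\,\tilde h(t)/\cosh(\pi t)\,dt$ sits squarely in the oscillatory regime; no amount of Weil cancellation in a single Kloosterman sum kills the $\asymp T^2$ pairs $(q_1,q_2)$. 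What the paper actually does is evaluate this transform by the Fourier--Fresnel identity $\widehat{(J_{2it}-J_{-2it})/\sinh(\pi t)}(y)=-i\cos(x\cosh(\pi y))$, extract the phase $e_c\!\left(-(2q_1q_2+\tfrac{m_2q_1}{d_2}+\tfrac{m_1q_2}{d_1})\right)$ from the Taylor expansion of $x$, and perform Poisson summation in $q_1,q_2$ modulo $c$. The outcome is a genuine main term of order $T$: the sum over $c$ of the complete twisted sums $S_c=\sum_{a,b\bmod c}S(a(a+\tfrac{m_1}{d_1}),b(b+\tfrac{m_2}{d_2});c)\,e_c(-(2ab+\tfrac{m_2a}{d_2}+\tfrac{m_1b}{d_1}))$ against an explicit archimedean integral, which converges because the \emph{complete} sums $S_c$ exhibit cancellation in $c$, not because the individual Kloosterman sums are small.

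Consequently your argument does not establish the limit: the step ``off-diagonal $=o(T)$'' fails, and even if one granted it, the resulting form would be only the diagonal piece $Q_D$, not the quantum variance. The non-diagonal piece $Q_{ND}$ is structurally essential in the rest of the paper --- the Hecke self-adjointness $Q(T_p\xi,\eta)=Q(\xi,T_p\eta)$ of Propositions 4 and 5 is proved by separate combinatorial identities for $Q_D$ and $Q_{ND}$ (via Sali\'e-sum evaluations of $S_c$), and the $L(\tfrac12,\pi)$ factor in Theorem 2 cannot emerge from the diagonal alone. The quantitative bound in terms of $\|h_i\|_A$ and $(|m_i|+1)^A$ then also has to be extracted from both the diagonal and the $S_c$-sum by integration by parts in the double integrals, not merely from the diagonal term.
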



\begin{proof}
We prove the proposition for weight $-2k$, $k>0$ and it is analogous for functions of weight $2k$ (the case of $k_1=k_2=0$ being dealt with in \cite{Z}). Let $m_1m_2\neq 0$, without loss of generality, we  assume $m_1, m_2\in \mathbb{N}$. By the Iwasawa decomposition and unfolding we have
\begin{eqnarray} 
\omega_j(P_{h,m,2k}) & = &
\int_{\Gamma\backslash
G}(e^{2ik\theta}\sum_{\gamma\in\Gamma_\infty\backslash\Gamma}h(y(\gamma z))(\epsilon_{\gamma}(z))^{2k}e(mx(\gamma
z)))d\omega_j \nonumber \\
&=& \int_{\Gamma_{\infty}\backslash
\mathbb{H}}h(y)e(mx)\varphi_j(z)\varphi_{j,k}(z)d\mu(z)
  \end{eqnarray}
Apply the Fourier expansion of  $\varphi_{j,k}(z)$ \cite{J94},
$$
\varphi_{j,k}(z)=(-1)^k\Gamma(1/2+it_j)\sum_{n\neq 0}\frac{c_j(|n|)W_{\sgn(n)k,it_j}(4\pi|n|y)e(nx)}{\sqrt{|n|}\Gamma(\frac12+\sgn(n)k+it_j)},
$$
and $$\varphi_j(z)=\sum_{n\neq 0}\frac{c_j(|n|)W_{0,it_j}(4\pi|n|y)e(nx)}{\sqrt{|n|}\Gamma(\frac12+it_j)}.$$
From the relation $c_j(n)=c_j(1)\lambda_j(n)$ and the well-known multiplicativity of  Hecke eigenvalues
$$\lambda_j(n)\lambda_j(m)=\sum_{d|(n,m)}\lambda_j\left(\frac{mn}{d^2}\right),$$
we have
\begin{eqnarray}
\omega_j(P_{h,m,2k})  &=&4\pi(-1)^k\Gamma(\frac12+it_j)c_j(1)\sum_{d|m}\sum_{q\neq0,-\frac md}\frac{c_j(q^2+\frac{qm}{d})}{\sqrt{|1+\frac{m}{qd}|}}\nonumber\\
 &&\int_0^{\infty}\frac{W_{\sgn(q)k,it_j}(y)}{\Gamma(\frac12+\sgn(q)k+it_j)}W_{0,it_j}\left(y\Big|1+\frac{m}{qd}\Big|\right)h\left(\frac{y}{4\pi|qd|}\right)\frac{dy}{y^2}.\label{4}
\end{eqnarray}
 Let $H(s)$ be the Mellin transform of $h(y)$,
 $$H(s)=\int_0^{\infty}h(y)y^{-s}\frac{dy}{y}.$$
By  the Mellin inversion,
$$h(y)=\frac{1}{2\pi i}\int_{\sigma-i\infty}^{\sigma+i\infty}H(s)y^s ds,$$
for $\sigma>1$, the inner integral (25) can be written as 
$$\frac{1}{2\pi i}\int_{\sigma-i\infty}^{\sigma+i\infty}\frac{H(s)}{|4\pi qd|^s}\int_0^{\infty}y^{s-2}\frac{W_{\sgn(q)k,it_j}(y)}{\Gamma(\frac12+\sgn(q)k+it_j)}W_{0,it_j}\left(y\Big|1+\frac{m}{qd}\Big|\right)dy ds$$
Since $W_{0,\mu}(y)=\sqrt{y/\pi}K_{\mu}(y/2)$, we can denote the inner integral as
$$A_k(s)=\int_0^{\infty}y^{s-\frac 32}W_{\sgn(q)k,it_j}(2y)K_{it_j}\left(y\Big|1+\frac{m}{qd}\Big|\right)dy$$
 When $k=0$, the integral involves a product of two $K$-Bessel functions, which was evaluated by Luo-Sarnak \cite{Luo7}. Jakobson  \cite{J97} evaluated $A_1(s)$ using the standard properties of $K$-Bessel and Whittaker functions, 
 $$W_{1,it_j}=\sqrt{\frac 2\pi}(y^{\frac32}K_{it_j}(y)-y^{\frac12}(\frac12+it_j)K_{it_j}(y)+y^{\frac32}K_{it_j+1}(y))$$
 in which one gets $$A_1(s)=A_0(s+1)-(\frac12+it_j)A_0(s)+\sqrt{\frac{2}{\pi}}B(s)$$
where $$B(s)=\int_0^{\infty}y^{s}K_{it_j+1}(y)K_{it_j}\left(y\Big|1+\frac{m}{qd}\Big|\right)dy$$
Hence, 
\begin{eqnarray}
\quad\quad\quad\sqrt{\frac{\pi}{2}}A_1(s)&=&2^{s-2}\Gamma\left(\frac{s+1+2it_j}{2}\right)\Gamma\left(\frac{s+1-2it_j}{2}\right)|1+\frac{m}{qd}|^{it_j}\nonumber\\
&& \int_0^1\tau^{\frac{s-1}{2}}(1-\tau)^{\frac{s-1}{2}}(1+\frac{2\tau m}{qd}+\tau(\frac{m}{qd})^2)^{-\frac{s+1}{2}-it_j} d\tau\nonumber\\
&&-(\frac12+it_j)2^{s-3}\Gamma\left(\frac{s+2it_j}{2}\right)\Gamma\left(\frac{s-2it_j}{2}\right)|1+\frac{m}{qd}|^{it_j}\nonumber\\
&& \int_0^1\tau^{\frac{s-2}{2}}(1-\tau)^{\frac{s-2}{2}}(1+\frac{2\tau m}{qd}+\tau(\frac{m}{qd})^2)^{-\frac{s}{2}-it_j} d\tau\nonumber\\
&&+2^{s-2}\Gamma\left(\frac{s+2+2it_j}{2}\right)\Gamma\left(\frac{s-2it_j}{2}\right)|1+\frac{m}{qd}|^{it_j}\nonumber\\
&& \int_0^1\tau^{\frac{s-2}{2}}(1-\tau)^{\frac{s}{2}}(1+\frac{2\tau m}{qd}+\tau(\frac{m}{qd})^2)^{-\frac{s}{2}-1-it_j} d\tau
\end{eqnarray}
Similarly, we can obtain $A_{-1}(s)$ by the formula
$$A_{-1}(s)=\frac{A_0(s+1)}{\frac14+t_j^2}+\frac{A_0(s)}{\frac12-it_j}-\sqrt{\frac{2}{\pi}}\frac{B(s)}{\frac14+t_j^2}.$$

Then plug $A_1(s)$ and $A_{-1}(s)$ into (25) and by Stirling formula, Mellin inversion and the fact that \cite{Luo6} $$|c_j(1)|^2=\frac{2\cosh \pi t_j}{L(1,\textrm{sym}^2\varphi_j)},$$ 
we have
 \begin{eqnarray}
 \omega_j(P_{h,m,2})  &=&\frac{1}{L(1,\textrm{sym}^2\varphi_j)}\sum_{d|m}\sum_{q>0}\lambda_j(q^2+\frac{qm}{d})\tilde{H}(t_j,d,q,m)+O(t_j^{-2+\epsilon})\nonumber
 \end{eqnarray}
 

where
$$\tilde{H}(t_j,d,q,m)=\tilde{H}_1(t_j,d,q,m)+\tilde{H}_2(t_j,d,q,m)+\tilde{H}_3(t_j,d,q,m)$$ and
\begin{eqnarray}
\tilde{H}_1(t_j,d,q,m)&=&
\int_0^1 \frac{-(2\pi)^{3/2}(1+\frac{m}{qd})^{it_j-\frac12}}{(1+\frac{2\tau m}{qd}+\tau(\frac{m}{qd})^2)^{it_j}} (\tau(1-\tau)(1+\frac{2\tau m}{qd}+\tau(\frac{m}{qd})^2))^{-\frac 12}\nonumber\\
&& h\left(\frac{t_j\sqrt{\tau(1-\tau)}}{2\pi dq\sqrt{1+\frac{2\tau m}{qd}+\frac{\tau m^2}{(qd)^2}}}\right)d\tau,
 \end{eqnarray}
 
  \begin{eqnarray}
\tilde{H}_2(t_j,d,q,m)&=&
2\int_0^1 \frac{(2\pi)^{3/2}(1+\frac{m}{qd})^{it_j-\frac12}}{(1+\frac{2\tau m}{qd}+\tau(\frac{m}{qd})^2)^{it_j}}(\tau(1-\tau))^{-1}\nonumber\\
 && h\left(\frac{t_j\sqrt{\tau(1-\tau)}}{2\pi dq\sqrt{1+\frac{2\tau m}{qd}+\frac{\tau m^2}{(qd)^2}}}\right)d\tau,
 \end{eqnarray}
 and
  \begin{eqnarray}
\tilde{H}_3(t_j,d,q,m)&=&
\int_0^1 \frac{-(2\pi)^{3/2}(1+\frac{m}{qd})^{it_j-\frac12}}{(1+\frac{2\tau m}{qd}+\tau(\frac{m}{qd})^2)^{it_j}}(\tau(1+\frac{2\tau m}{qd}+\tau(\frac{m}{qd})^2))^{-1}\nonumber\\
 &&\quad\quad \quad\quad h\left(\frac{t_j\sqrt{\tau(1-\tau)}}{2\pi dq\sqrt{1+\frac{2\tau m}{qd}+\frac{\tau m^2}{(qd)^2}}}\right)d\tau,
 \end{eqnarray}

 For $i=1, 2$, we denote 
 \begin{eqnarray}
\omega_j(P_{h,m_i,2})  &=&\frac{1}{L(1,\mathrm{sym}^2\varphi_j)}\sum_{d_i|m_i}\sum_{q_i>0}\lambda_j(q_i^2+\frac{q_im_i}{d_i})(\tilde{H}_{1}(t_j,d_i,q_i,m_i)\nonumber\\
 &&\quad\quad\quad+\tilde{H}_{2}(t_j,d_i,q_i,m_i)+\tilde{H}_{3}(t_j,d_i,q_i,m_i)).
 \end{eqnarray}
 
Now, plug into
$$\sum_{j\ge 1}u\left(\frac{t_j}{T}\right)L (1,\mathrm{sym}^2\varphi_j)\omega_j(P_{h_1,m_1,2})\overline{\omega}_j(P_{h_2,m_2,2})$$

and apply Kuznetsov's formula \cite{Ku} to the inner sum, we obtain
\begin{eqnarray}
& &\sum_{j\ge 1}\lambda_j(q_1(q_1+\frac{m_1}{d_1}))\overline{\lambda_j(q_2(q_2+\frac{m_2}{d_2}))}\frac{1}{L(1,\mathrm{sym}^2\varphi_j)}\widetilde{h}(t_j)\nonumber\\
&=&\frac{\delta_{q_1(q_1+\frac{m_1}{d_1}),q_2(q_2+\frac{m_2}{d_2})}}{\pi^2}\int_{-\infty}^{\infty}t\tanh(\pi
t)\widetilde{h}(t)dt-\frac{2}{\pi}\int_{0}^{\infty}\frac{\widetilde{h}(t)d_{it}(q_1^2+q_1m_1/d_1)}{|\zeta(1+2it)|^2}\nonumber\\
& &d_{it}(q_2^2+q_2m_2/d_2)dt+\frac{2i}{\pi}\sum_c
c^{-1}S(q_1^2+q_1m_1/d_1,q_2^2+q_2m_2/d_2;c)\nonumber\\
&
&\int_{-\infty}^{\infty}J_{2it}(\frac{4\pi\sqrt{(q_1^2+q_1m_1/d_1)(q_2^2+q_2m_2/d_2)}}{c})t\frac{\widetilde{h}(t)}{\cosh(\pi
t)}dt.\nonumber
\end{eqnarray}
Here $$S(m,n;c)=\sum_{ad\equiv 1\mod c}e(\frac{dm+an}{c})$$ is the
Kloosterman sum and
$$d_{it}(n)=\sum_{d_1d_2=n}\left(\frac{d_1}{d_2}\right)^{it}.$$
and $$\tilde{h}(t)=\frac{1}{t^2}\tilde{H}(t,d_1q_1,m_1)\overline{\tilde{H}(t,d_2q_2,m_2)}u\left(\frac{t}{T}\right).$$

Thus, we have
\begin{eqnarray}
&
&\sum_{j\ge 1}u\left(\frac{t_j}{T}\right)L (1,\mathrm{sym}^2\varphi_j)\omega_j(P_{h_1,m_1,2})\overline{\omega}_j(P_{h_2,m_2,2})\nonumber\\
&=&
\frac{\pi^2}{32}\sum_{d_1,d_2,q_1,q_2}\Big(\frac{\delta_{q_1(q_1+\frac{m_1}{d_1}),q_2(q_2+\frac{m_2}{d_2})}}{\pi^2}\int_{-\infty}^{\infty}t\tanh(\pi
t)\widetilde{h}(t)dt\nonumber\\
&
&-\frac{2}{\pi}\int_{0}^{\infty}\frac{\widetilde{h}(t)}{|\zeta(1+2it)|^2}d_{it}(q_1^2+q_1m_1/d_1)d_{it}(q_2^2+q_2m_2/d_2)dt\\
& &+\frac{2i}{\pi}\sum_c
c^{-1}S(q_1^2+q_1m_1/d_1,q_2^2+q_2m_2/d_2;c) \nonumber\\& &\quad
\quad
\quad\int_{-\infty}^{\infty}J_{2it}(\frac{4\pi\sqrt{(q_1^2+q_1m_1/d_1)(q_2^2+q_2m_2/d_2)}}{c})t\frac{\widetilde{h}(t)}{\cosh(\pi
t)}dt\Big) ,
\end{eqnarray}
Next, we will estimate each of these  terms respectively.

First, we treat the diagonal terms. Since   for fixed $m_1$, $m_2,$ $q_1(q_1+\frac{m_1}{d_1})=q_2(q_2+\frac{m_2}{d_2})$ has 
 a uniformly bounded number of solutions  if $m_1/d_1\neq m_2/d_2$, and the
integer solutions to
$q_1(q_1+\frac{m_1}{d_1})=q_2(q_2+\frac{m_2}{d_2})$ are only
$q_1=q_2$ if $\frac{m_1}{d_1}=\frac{m_2}{d_2}$. Thus, the diagonal
terms are
$$\int_{-\infty}^{\infty} \frac {1}{32t} u\left(\frac{t}{T}\right)\sum_{m_1/d_1=m_2/d_2}\sum_{q\geq1}\tilde{H}(t,d_1q,m_1)\overline{\tilde{H}}(t,d_2q,m_2)dt+O(1)$$
where 
\begin{eqnarray}
& &\tilde{H}(t,d_1q,m_1)\overline{\tilde{H}}(t,d_2q,m_2)\nonumber\\
&=&\sum_{i,j=1}^3\tilde{H}_{1i}(t,d_1q,m_1)\tilde{H}_{2j}(t,d_2q,m_2)\nonumber
\end{eqnarray}
Here, we treat the following one of  the nine terms
\begin{eqnarray}
&&\tilde{H}_{11}(t,d_1q,m_1)\tilde{H}_{21}(t,d_2q,m_2)\nonumber\\
&=&\int_0^1\int_0^1
\frac{1}{\tau\eta(1-\tau)(1-\eta)}\cos(\frac{m_1}{d_1q}t(2\tau-1))\cos(\frac{m_2}{d_2q}t(2\eta-1))\nonumber\\
& &h_1\left(\frac{t\sqrt{\tau(1-\tau)}}{\pi d_1q\sqrt{1+\frac{2\tau
m_1}{d_1q}+\frac{\tau
m_1^2}{d_1^2q^2}}}\right)h_2\left(\frac{t\sqrt{\eta(1-\eta)}}{\pi
d_2q\sqrt{1+\frac{2\eta m_2}{d_2q}+\frac{\eta
m_2^2}{d_2^2q^2}}}\right)d\tau d\eta \nonumber
\end{eqnarray}
For $i=1,2$; 
 $h_i$ are
continuous uniformly on $\mathbb{R}$. For the sum over $q$, we
estimate it as

\begin{eqnarray}
& &\sum_{q\geq1}H_1(t,d_1q,m_1)\overline{H}_2(t,d_2q,m_2)
\nonumber\\
&=&\int_0^1\int_0^1\int_0^{\infty}\cos(\frac{m_1}{d_1q}t(2\tau-1))\cos(\frac{m_2}{d_2q}t(2\eta-1))h_1\left(\frac{t\sqrt{\tau(1-\tau)}}{\pi
d_1q\sqrt{1+\frac{2\tau m_1}{d_1q}+\frac{\tau
m_1^2}{d_1^2q^2}}}\right)\nonumber\\
& &h_2\left(\frac{t\sqrt{\eta(1-\eta)}}{\pi d_2q\sqrt{1+\frac{2\eta
m_2}{d_2q}+\frac{\eta m_2^2}{d_2^2q^2}}}\right)dq
\frac{1}{\tau\eta(1-\tau)(1-\eta)}d\tau d\eta +O(T^{-1}) \nonumber
\end{eqnarray}
\begin{eqnarray}
&=&\int_0^1\int_0^1\int_0^{\infty}\cos(\frac{m_1}{d_1q}t(2\tau-1))\cos(\frac{m_2}{d_2q}t(2\eta-1))h_1\left(\frac{t\sqrt{\tau(1-\tau)}}{\pi
d_1q}\right)\nonumber\\
& &h_2\left(\frac{t\sqrt{\eta(1-\eta)}}{\pi d_2q}\right)dq
\frac{1}{\tau\eta(1-\tau)(1-\eta)}d\tau d\eta +O(T^{-1}) \nonumber\\
&=&\frac{t}{\pi}\int_0^1\int_0^1\int_0^{\infty}\frac{\cos(\frac{\pi
m_1}{d_1}\xi(2\tau-1))\cos(\frac{\pi
m_2}{d_2}\xi(2\eta-1))}{\tau\eta(1-\tau)(1-\eta)}h_1\left(\frac{\xi\sqrt{\tau(1-\tau)}}{
d_1}\right)\nonumber
\end{eqnarray}
\begin{eqnarray}&
&h_2\left(\frac{\xi\sqrt{\eta(1-\eta)}}{d_2}\right)\frac{d\xi}{\xi^2}
d\tau d\eta  +O(T^{-1})\nonumber \end{eqnarray}

Similarly, we can evaluate the other 8 terms and  we obtain the main term of the diagonal term is
$$ T\sum_{\frac{m_1}{d_1}=\frac{m_2}{d_2}}\int_0^{\infty}\int_0^{1}\int_0^{1}
\sum_{i,j=1}^3\tilde{h}_{1i}(\xi,m_1,d_1,\tau_1)\tilde{h}_{2j}(\xi,m_2,d_2,\tau_2)+O(1)\label{d}$$
where 
$$\tilde{h}_{i1}(\xi,m_i,d_i,\tau_i)=\frac{\cos(\frac{\pi m_i}{d_i}\xi(2\tau_i-1)}{\sqrt{\tau_i(1-\tau_i)}})h_i(\frac{\xi\sqrt{\tau_i(1-\tau_i)}}{d_i}),$$
$$\tilde{h}_{i2}(\xi,m_i,d_i,\tau_i)=\frac{\cos(\frac{\pi m_i}{d_i}\xi(2\tau_i-1)}{\tau_i(1-\tau_i)})h_i(\frac{\xi\sqrt{\tau_i(1-\tau_i)}}{d_i})$$
$$\tilde{h}_{i3}(\xi,m_i,d_i,\tau_i)=\frac{\cos(\frac{\pi m_i}{d_i}\xi(2\tau_i-1)}{\tau_i})h_i(\frac{\xi\sqrt{\tau_i(1-\tau_i)}}{d_i})$$
for $i=1, 2$.

\vspace{5mm}
For the non-diagonal terms which is the following
\begin{eqnarray}\sum_{\substack{d_1|m_1\\ d_2|m_2}}\sum_{q_1, q_2}\sum_{c\geq 1}\frac{S(q_1(q_1+\frac{m_1}{d_1}),q_2(q_2+\frac{m_2}{d_2}); c)}{c}\quad\quad\quad\quad\quad\quad\quad\quad\quad\quad\quad\nonumber\\ \quad\quad\quad\quad\quad\quad\quad\quad\times \int_\mathbb{R}J_{2it}\left(\frac{4\pi \sqrt{q_1q_2(q_1+\frac{m_1}{d_1})(q_2+\frac{m_2}{d_2})}}{c}\right)
\frac{\tilde{h}(t)t}{\cosh(\pi t)}dt\nonumber\end{eqnarray}
where$$\tilde{h}(t)=\frac{1}{t^2}H_1(t,d_1q_1,m_1)\overline{H_2(t,d_2q_2,m_2)}u\left(\frac{t}{T}\right),$$
$$H_j(t,k,m)=\int_0^1\left(\frac{1+\frac{m}{k}}{1+\frac{2\tau m}{k}+\frac{\tau m^2}{k^2}}\right)^{it}\frac{1}{\tau(1-\tau)}h_j\left(\frac{t\sqrt{\tau(1-\tau)}}{\pi
k\sqrt{1+\frac{2\tau m}{k}+\frac{\tau m^2}{k^2}}}\right)d\tau;$$
 for $j=1,2.$\\

Let
$x=\frac{4\pi\sqrt{q_1q_2(q_1+\frac{m_1}{d_1})(q_2+\frac{m_2}{d_2})}}{c}$,
the inner integral in the non-diagonal terms is

$$I_T(x)=\frac{1}{2}\int_{\mathbb{R}}\frac{J_{2it}(x)-J_{-2it}(x)}{\sinh(\pi t)}\tilde{h}(t)t\tanh \pi t  dt$$
Since $\tanh (\pi t)=$sgn$ (t)+O(e^{-\pi |t|})$ for large $|t|$ and the function $u$ in $\tilde{h}(t)$  localizes $t$ to $T$, we
can remove $\tanh(\pi t)$ by getting a negligible term $O(T^{-N})$
for any $N>0$. (Note: Here we can truncate the $q_1$, $q_2$, $c$ sums as in the bottom of p.15)

Next we apply the Parseval identity and the Fourier
transform in 
\cite{Ba}
$$\left(\frac{\widehat{J_{2it}(x)-J_{-2it}(x)}}{\sinh(\pi
t)}\right)(y)=-i\cos(x\cosh(\pi y)).$$ By the evaluation of the
Fresnel integrals, we have
\begin{eqnarray}
I_T(x)&=&\frac{-i}{2}\int_0^{\infty}u\left(\frac{t}{T}\right)\sqrt{\frac{2}{xy}}\int_0^1\int_0^1\frac{\cos(\frac{m_1}{d_1k}\sqrt\frac{xy}{2}(2\tau-1))\cos(\frac{m_2}{d_2k}\sqrt\frac{xy}{2}(2\eta-1))}{\tau\eta(1-\tau)(1-\eta)}\nonumber \\
&
&h_1\left(\frac{\sqrt{\frac{xy}{2}}\sqrt{\tau(1-\tau)}}{\pi
d_1k\sqrt{1+\frac{2\tau m_1}{d_1k}+\frac{\tau
m_1^2}{d_1^2k^2}}}\right) h_2\left(\frac{\sqrt{\frac{xy}{2}}\sqrt{\eta(1-\eta)}}{\pi
d_2k\sqrt{1+\frac{2\eta m_2}{d_2k}+\frac{\eta
m_2^2}{d_2^2k^2}}}\right)\nonumber \\
&
&d\tau d\eta\cos
(x-y+\frac{\pi}{4})\frac{dy}{\sqrt{\pi y}}\nonumber
\end{eqnarray}

Thus, the non-diagonal terms are equal to

\begin{eqnarray}
&&\frac{-i}{2}\sum_{\substack{d_1|m_1\\ d_2|m_2}}\sum_{q_1,
q_2}\sum_{c\geq
1}\frac{S(q_1(q_1+\frac{m_1}{d_1}),q_2(q_2+\frac{m_2}{d_2});
c)}{c}\int_0^{\infty}u\left(\frac{\sqrt{\frac{xy}{2}}}{T}\right)\sqrt{\frac{2}{xy}}\int_0^1\int_0^1\nonumber\\
&&\frac{\cos(\frac{m_1}{d_1k}\sqrt\frac{xy}{2}(2\tau-1))\cos(\frac{m_2}{d_2k}\sqrt\frac{xy}{2}(2\eta-1))}{\tau\eta(1-\tau)(1-\eta)}h_1\left(\frac{\sqrt{\frac{xy}{2}}\sqrt{\tau(1-\tau)}}{\pi
d_1q_1}\right)\nonumber\\
&&h_2\left(\frac{\sqrt{\frac{xy}{2}}\sqrt{\eta(1-\eta)}}{\pi
d_2q_2}\right)d\tau d\eta\cos (x-y+\frac{\pi}{4})\frac{dy}{\sqrt{\pi
y}}\nonumber
\end{eqnarray}
Since both $h_1(t)$ and $h_2(t)$ satisfy $h_i^{(n)}\ll(1+|t|)^{-N}$
for any $n>0$ and sufficiently large $N$, and $h_i(t)\ll t^{10}$
when $t\rightarrow 0$, the above sum is concentrated on
$$|\frac{\sqrt{\frac{xy}{2}}}{T}|\ll
1$$
$$T^{-\frac{1}{10}}\ll\frac{xy\tau(1-\tau)}{q_1^2}\ll 1$$
$$T^{-\frac{1}{10}}\ll\frac{xy\eta(1-\eta)}{q_2^2}\ll 1$$

Thus we can get the following
range 
$$\sqrt{\frac{xy}{2}}\sim T.$$
Note that here $x\sim q_1q_2c^{-1}$, the ranges for $q_1$, $q_2$,
$c$ are as follows
$$T\sqrt{\tau(1-\tau)}\ll q_1\ll
T^{\frac{21}{20}}\sqrt{\tau(1-\tau)},$$
$$T\sqrt{\eta(1-\eta)}\ll q_2\ll T^{\frac{21}{20}}\sqrt{\eta(1-\eta)},$$
$$c\ll yT^{\frac{1}{10}}$$

 Here by the above relations and
partial integration sufficiently many times, we will get
sufficiently large power of $y$, $q_1$ and $q_2$ occurring in the
denominator, so we get the terms with $c\gg
T^{\frac{1}{10}}$ contribute $O(1)$.

Denote the above sum  as
$$\sum_{\substack{d_1|m_1\\ d_2|m_2}}\sum_{q_1,
q_2}\sum_{c\geq
1}\frac{S(q_1(q_1+\frac{m_1}{d_1}),q_2(q_2+\frac{m_2}{d_2});
c)}{c}J_{q_1,q_2,c}+O(1).$$

Making the change of variable $t=\frac{\sqrt{\frac{xy}{2}}}{T}$, we
get $J_{q_1,q_2,c}$ is
\begin{eqnarray}
&&\frac{2^{\frac32}}{\sqrt{\pi
x}}\int_0^{\infty} u(t)\frac1t
\sin(-x+\frac{2(tT)^2}{x}-\frac{\pi}{4})\int_0^1\int_0^1\frac{\cos(\frac{m_1}{d_1k}tT(2\tau-1))}{\tau(1-\tau)}\nonumber\\
&&\frac{\cos(\frac{m_2}{d_2k}tT(2\eta-1))}{\eta(1-\eta)}h_1\left(\frac{tT\sqrt{\tau(1-\tau)}}{\pi
d_1q_1}\right)h_2\left(\frac{tT\sqrt{\eta(1-\eta)}}{\pi
d_2q_2}\right)d\tau d\eta dt\nonumber
\end{eqnarray}
By Taylor expansion,
\begin{eqnarray}
xi&=& \frac{4\pi
i}{c}\sqrt{q_1q_2(q_1+\frac{m_1}{d_1})(q_2+\frac{m_2}{d_2})}
\nonumber\\
&=&\frac{2\pi
i}{c}(2q_1q_2+\frac{m_2q_1}{d_2}+\frac{m_1q_2}{d_1}+\cdots)\nonumber
\end{eqnarray}
So we can write 
$$J_{q_1,q_2,c}=\Im (e_c(-(2q_1q_2+\frac{m_2q_1}{d_2}+\frac{m_1q_2}{d_1}))f_c(q_1,q_2)),$$
where
\begin{eqnarray}
f_c(q_1,q_2)&=&
e_c(\frac{m_1m_2}{2d_1d_2}-\frac{m_1^2q_2}{4d_1^2q_1}-\frac{m_2^2q_1}{4d_2^2q_2}+\cdots)\frac{2^{\frac32}}{\sqrt{\pi
x}}\int_0^{\infty} u(t)\frac1t
\nonumber\\
& &e^{i(\frac{2(tT)^2}{x}-\frac{\pi}{4})}\int_0^1\int_0^1
\frac{\cos(\frac{m_1}{d_1k}tT(2\tau-1))\cos(\frac{m_2}{d_2k}tT(2\eta-1))}{\tau\eta(1-\tau)(1-\eta)}\nonumber\\
& &h_1\left(\frac{tT\sqrt{\tau(1-\tau)}}{\pi
d_1q_1}\right)h_2\left(\frac{tT\sqrt{\eta(1-\eta)}}{\pi
d_2q_2}\right)d\tau d\eta dt\nonumber
\end{eqnarray}
and we use the notation $e_c(z)=e^{\frac{2\pi i z}{c}}$.

 Reducing
the summation over $q_1,q_2$ into congruence classes mod $c$, we
have,
\begin{eqnarray}
&
&\sum_{q_1,q_2\geq1}S(q_1(q_1+\frac{m_1}{d_1}),q_2(q_2+\frac{m_2}{d_2});c)e_c(-(2q_1q_2+\frac{m_2q_1}{d_2}+\frac{m_1q_2}{d_1}))f_c(q_1,q_2)\nonumber
\end{eqnarray}

\begin{eqnarray}
&=&\sum_{a,b\mod
c}S(a(a+\frac{m_1}{d_1}),b(b+\frac{m_2}{d_2});c)e_c(-(2ab+\frac{m_2a}{d_2}+\frac{m_1b}{d_1}))\nonumber\\
& &\sum_{q_1\equiv a,q_2\equiv b\mod c}f_c(q_1,q_2)\nonumber\\
&=& \frac{1}{c^2}\sum_{u,v \mod c}\sum_{a,b \mod
c}S(a(a+\frac{m_1}{d_1}),b(b+\frac{m_2}{d_2});c)\nonumber\\
&
&e_c(-(2ab+(\frac{m_2}{d_2}+u)a+(\frac{m_1}{d_1}+v)b))(\sum_{q_1,q_2}f_c(q_1,q_2)e_c(-uq_1-vq_2)).\nonumber
\end{eqnarray}
Apply the Poisson summation for the sum in $q_1$, $q_2$ and obtain,
$$\sum_{q_1,q_2}f_c(q_1,q_2)e_c(-uq_1-vq_2)=\sum_{l_1,l_2}\int\int_{\mathrm{R}^2}f_c(q_1,q_2)e((l_1-\frac{u}{c})q_1+(l_2-\frac{v}{c})q_2)dq_1dq_2.$$
We can assume $|u|\leq\frac{c}{2}$, $|v|\leq\frac{c}{2}$, by partial
integration sufficiently many times, we get
$$\sum_{q_1,q_2}f_c(q_1,q_2)e_c(-uq_1-vq_2)=\int\int_{\mathrm{R}^2}f_c(q_1,q_2)e(-\frac{u}{c}q_1-\frac{v}{c}q_2)dq_1dq_2+O(T^{-A})$$
for any $A>1$.\\
 For $(u, v)\neq(0,0)$, by partial integration sufficiently many
times, we  obtain,  for $c\ll T^{\frac{1}{10}}$
$$\int\int_{\mathrm{R}^2}f_c(q_1,q_2)e(-\frac{u}{c}q_1-\frac{v}{c}q_2)dq_1dq_2\ll T^{-A},$$
for any $A>0$. Thus only $(u, v)=(0,0)$ contributes.  For the $c$-summation,  we can also allow
$c\gg T^{\frac{1}{10}}$, since by
partial integration sufficiently many times,
$$\int\int_{\mathrm{R}^2}f_c(q_1,q_2)dq_1dq_2\ll c^{-A}T^{2},$$
for any $A>0$.\\

For fixed $d_i,m_i$ ($i=1,2$), denote
$$S_c=\sum_{a,b\mod
c}S(a(a+\frac{m_1}{d_1}),b(b+\frac{m_2}{d_2});c)e_c(-(2ab+\frac{m_2a}{d_2}+\frac{m_1b}{d_1}))$$

Thus, the non-diagonal contribution is
\begin{eqnarray}
&&\sum_{\substack{d_1|m_1\\ d_2|m_2}}\sum_{c\geq 1}\Im
(\frac{S_c}{c^2}\int\int_{\mathrm{R}^2}f_c(q_1,q_2)dq_1dq_2)+O(1)\nonumber\\
&=&\sum_{\substack{d_1|m_1\\ d_2|m_2}}\sum_{c\geq 1}\Im
(\frac{S_c}{c^2}\int\int_{\mathrm{R}^2}e_c(\frac{m_1m_2}{2d_1d_2}-\frac{m_1^2q_2}{4d_1^2q_1}-\frac{m_2^2q_1}{4d_2^2q_2})\frac{2^{\frac32}}{\sqrt{\pi
x}}\int_0^{\infty}u(t)\frac1t
\nonumber\\
& &e^{i(\frac{2(tT)^2}{x}-\frac{\pi}{4})}\int_0^1\int_0^1
\frac{\cos(\frac{m_1}{d_1q_1}tT(2\tau-1))\cos(\frac{m_2}{d_2q_2}tT(2\eta-1))}{\tau\eta(1-\tau)(1-\eta)}h_1\left(\frac{tT\sqrt{\tau(1-\tau)}}{\pi
d_1q_1}\right)\nonumber\\
& &h_2\left(\frac{tT\sqrt{\eta(1-\eta)}}{\pi d_2q_2}\right)d\tau
d\eta dtdq_1dq_2)+O(1)\nonumber\\
&=&T\sum_{\substack{d_1|m_1\\
d_2|m_2}}\sum_{c\geq 1}\Im
(\frac{S_c\zeta_8}{c^{\frac{3}{2}}}\int\int_{\mathrm{R}^2}e_c(\frac{m_1m_2}{2d_1d_2}-\frac{m_1^2\phi}{4d_1^2\xi}-\frac{m_2^2\xi}{4d_2^2\phi})\frac{2^{\frac32}}{(\xi\phi)^{\frac32}}\nonumber\\
& &e(\xi\phi c) \int_0^1\int_0^1
\frac{\cos(\frac{m_1\xi}{d_1}(2\tau-1))\cos(\frac{m_2\phi}{d_2}(2\eta-1))}{\tau\eta(1-\tau)(1-\eta)}h_1\left(\frac{\xi\sqrt{\tau(1-\tau)}}{\pi
d_1}\right)\nonumber\\
& &h_2\left(\frac{\phi\sqrt{\eta(1-\eta)}}{\pi d_2}\right)d\tau
d\eta d\xi d\phi)+O(1).\nonumber
\end{eqnarray}
Note:   For the last coefficient $T$ comes from another change of variable. The contribution from the higher Taylor coefficients in the definition of $f_c(q_1, q_2)$ are of order roughly $O(1/T)$, hence negligible by partial integration sufficiently many times.  

Thus, we obtain the following asymptotic formula including the diagonal and non-diagonal terms:
\begin{eqnarray}
&&\lim_{T\rightarrow\infty}\frac1T\sum_{j\ge 1}u\left(\frac{t_j}{T}\right)L (1,\mathrm{sym}^2\varphi_j)\omega_j(P_{h_1,m_1,2})\overline{\omega}_j(P_{h_2,m_2,2})\nonumber\\
&=&
\int_0^{\infty}u(t)dt(\sum_{\frac{m_1}{d_1}=\frac{m_2}{d_2}}\int_0^{\infty}\int_0^{1}\int_0^{1}
\sum_{i,j=1}^3\tilde{h}_{1i}(\xi,m_1,d_1,\tau_1)\tilde{h}_{2j}(\xi,m_2,d_2,\tau_2)d\tau_1 d\tau_2 \frac{d\xi}{\xi^2}\label{d}\nonumber\\
&&\\
&&+\sum_{d_1|m_1,d_2|m_2}\sum_{c\geq 1}\int_0^{\infty}\int_0^{\infty}\int_0^{1}\int_0^{1}\Im
\{\frac{S_c\zeta_8}{c^{\frac{5}{2}}}e_c(\frac{m_1m_2}{2d_1d_2}-\frac{m_1^2\xi_1}{4d_1^2\xi_2}-\frac{m_2^2\xi_2}{4d_2^2\xi_1})\nonumber
\end{eqnarray}
\begin{eqnarray}
&&e((d_1d_2)^2\xi_1\xi_2 c)\}\sum_{i,j=1}^3\tilde{h}_{1i}(\xi_1,m_1,d_1,\tau_1)\tilde{h}_{2j}(\xi_2,m_2,d_2,\tau_2)d\tau_1d\tau_2\frac{d\xi_1d\xi_2}{(\xi_1\xi_2)^{3/2}})
\nonumber\label{non}\\
&&\yesnumber
\end{eqnarray}
where 
$$\tilde{h}_{i1}(\xi,m_i,d_i,\tau_i)=\frac{\cos(\frac{\pi m_i}{d_i}\xi(2\tau_i-1)}{\sqrt{\tau_i(1-\tau_i)}})h_i(\frac{\xi\sqrt{\tau_i(1-\tau_i)}}{d_i}),$$
$$\tilde{h}_{i2}(\xi,m_i,d_i,\tau_i)=\frac{\cos(\frac{\pi m_i}{d_i}\xi(2\tau_i-1)}{\tau_i(1-\tau_i)})h_i(\frac{\xi\sqrt{\tau_i(1-\tau_i)}}{d_i})$$
$$\tilde{h}_{i3}(\xi,m_i,d_i,\tau_i)=\frac{\cos(\frac{\pi m_i}{d_i}\xi(2\tau_i-1)}{\tau_i}h_i(\frac{\xi\sqrt{\tau_i(1-\tau_i)}}{d_i})$$
for $i=1, 2$.
 
 In the non-diagonal terms (\ref{non}), $S_c$ is a sum involving Kloosterman sums which is explicitly
 $$S_c=\sum_{a,b \mod
c}S(a(a+\frac{m_1}{d_1}),b(b+\frac{m_2}{d_2});c)e_c(-(2ab+\frac{m_2a}{d_2}+\frac{m_1b}{d_1}))$$

This gives the existence of the limiting variance for the case $k_1=k_2=1$.

Now, by the induction and the recurrence formula 
$$A_{k+1}(s)=-2kA_{k}(s)+2A_{k}(s+1)-[(k-\frac 12)^2+t_j^2]A_{k-1}(s)$$
we can obtain the existence of $B(P_{h_1,m_1,k_1}, P_{h_2,m_2,k_2})$ for any $k_1, k_2\in \mathbb{Z}$. 
Precisely, for the term $[(k-\frac 12)^2+t_j^2]A_{k-1}(s)$, the involving Gamma factors are,
 \begin{eqnarray}
&&\frac{[(k-\frac 12)^2+t_j^2]\Gamma(\frac12+it_j)A_{k-1}(s)}{\Gamma(k+\frac32+it_j)}\nonumber\\
&=&\frac{\Gamma(\frac12+it_j)A_{k-1}(s)}{\Gamma(k-\frac12+it_j)}\cdot\frac{[(k-\frac 12)^2+t_j^2]\Gamma(k-\frac12+it_j)}{\Gamma(k+\frac32+it_j)}\nonumber
 \end{eqnarray}
Thus, we can evaluate using the induction assumption for the first factor and Stirling formula for the second factor.

For the term $kA_{k}(s)$, we can use the similar argument to evaluate. While for the terms involving $A_0(s+k)$ and $B(s+k)$, the Gamma factors are easy to handle since they are simply 
$$\frac{\Gamma(\frac{s+k}{2})^2}{\Gamma(s+k)}, \quad \frac{\Gamma(\frac{s+k}{2})\Gamma(\frac{s+k}{2}+1)}{\Gamma(s+k+1)}$$  

Moreover, by keeping track of the dependence on $h_1$ and $h_2$ and integration by parts in the  double integrals of (\ref{d}) and (\ref{non}), we obtain that there is a constant $A$ (depending on $k_1$, $k_2$), such that the sesquilinear form Q satisfies
\begin{eqnarray}
&&|Q(P_{h_1,m_1,k_1},P_{h_2,m_2,k_2})|\ll_{k_1,k_2} ((|m_1|+1)(|m_2|+1))^A\|h_1\|_A\|h_2\|_A.\nonumber\\
&&\yesnumber
\end{eqnarray}

If any incomplete Poincar\'{e} series in this proposition is
replaced by incomplete Eisenstein  series, i.e. $m_i=0$ with 
mean zero satisfying (9), the proposition is still valid. For the case $m_1=m_2=0$, there is
a slight modification for $Q$ as follows.
\begin{eqnarray}
&&\sum_{d_1,d_2\geq 1}\int_0^{\infty}\int_0^1
h_1\left(\frac{\xi\sqrt{\tau(1-\tau)}}{
d_1}\right)d\tau \int_0^1
h_2\left(\frac{\xi\sqrt{\eta(1-\eta)}}{
d_2}\right)d\eta \frac{d\xi}{\xi^2}\nonumber\\
&=&\int_0^{\infty}\int_0^1 \sum_{d_1\geq
1}h_1\left(\frac{\xi\sqrt{\tau(1-\tau)}}{
d_1}\right)d\tau \int_0^1 \sum_{d_2\geq
1}h_2\left(\frac{\xi\sqrt{\eta(1-\eta)}}{
d_2}\right)d\eta \frac{d\xi}{\xi^2}\nonumber
\end{eqnarray}
By Euler-MacLaurin summation formula, we have

$$\sum_{d_1\geq
1}h_1\left(\frac{\xi\sqrt{\tau(1-\tau)}}{
d_1}\right)=-\int_0^{\infty}b_2(\alpha)H_1\left(\frac{\xi\sqrt{\tau(1-\tau)}}{
\alpha}\right)\frac{d\alpha}{\alpha^2},$$ where $b_2(\alpha)$ is the
Bernoulli polynomial of degree 2, $H_1(x)=(h_1'(x)x^2)'$. For the
sum over $d_2$, we have the similar expression.

\end{proof}

This completes the proof of the existence of the quantum variance for vectors $\psi_1=P_{h_1,m_1,2k_1}$ and $\psi_2=P_{h_2,m_2,2k_2}$ in Theorem 1. To obtain the result for the general $\psi_1$, $\psi_2$ asserted in the Theorem one proceeds by the approximation arguments in Section 4 of \cite{Luo2}, which requires keeping track of the dependence of the remainders in the analysis leading to (33) and (34) above. This is a straightforward generalization and we omit the details. In the next section we derive an explicit version of (33) and (34)   for special Poincar\'{e} series of various weights.

\section{Symmetry Properties of $Q$} 

We begin by showing that the sesquilinear form $Q$ is invariant under the geodesic flow as well as  under time reversal. This is true much more generally as can be seen from the recent work of Anatharaman and Zelditch \cite{AZ} in the context of $\Gamma\backslash\mathbb{H}$ where $\Gamma$ is any lattice (not just $SL_2(\mathbb{Z})$, in fact they deal with cocompact lattices but their results are easily extended to finite volume as in \cite{Zelditch}). In this generality, they relate the Wigner distributions to what they call Patterson-Sullivan distributions. Since the latter are geodesic flow as well time reversal invariant, this yields a complete asymptotic expansion measuring this invariance.  This is given in their Theorem 1.2 and the expansion on page 386 (note
that our quantization and those in \cite{AZ} and \cite{AZ2} all coincide). 
Taken to second order this reads:

If $f$ is smooth on $\Gamma\backslash G$ as in Theorem 1, i.e.  bounded and with rapidly decay at cusps, let $\tau\in\mathbb{R}$ are fixed and $f_{\tau}(x)=f(x\mathcal{G}_{\tau})$, where  $\mathcal{G}_{\tau}$ is the geodesic flow, then
\begin{eqnarray}
&&<Op(f_{\tau})\phi_j,\phi_j>\nonumber\\
&=&<Op(f)\phi_j,\phi_j>+\frac{<Op(L_2(f_{\tau}-f))\phi_j,\phi_j>}{t_j}+O(\frac{1}{t_j^2})
\end{eqnarray}
where $L_2$ is a second order differential operator generated by the vector field  \(\displaystyle X_+=\left(\begin{array}{cc} 0
& 1\\
0& 0\end{array}\right) \).  Note: here we interchangeably use the notations of $<Op(f)\phi_j,\phi_j>$ and $\omega_j(f).$

First we apply (36) with the first term only, that is 
\begin{eqnarray}
<Op(f_{\tau})\phi_j,\phi_j>
=<Op(f)\phi_j,\phi_j>+O(\frac{1}{t_j})
\end{eqnarray}
to the variance sums.
\begin{eqnarray}
&&\sum_{t_j\le T}<Op(f_{\tau})\phi_j,\phi_j>\overline{<Op(g)\phi_j,\phi_j>}\nonumber\\
&=&\sum_{t_j\le T}<Op(f)\phi_j,\phi_j>\overline{<Op(g)\phi_j,\phi_j>}+O(\sum_{t_j\le T}\frac{1}{t_j}|<Op(g)\phi_j,\phi_j>|)\nonumber\\
\end{eqnarray}
Now the general quantum ergodicity theorem in this context \cite{Zel} asserts that as $y\rightarrow\infty$,
\begin{eqnarray}
\sum_{t_j\le y}|<Op(g)\phi_j,\phi_j>|=o(y^2)
\end{eqnarray}
Hence by partial summation in the second sum in (38), we get that  
\begin{eqnarray}
&&\sum_{t_j\le T}<Op(f_{\tau})\phi_j,\phi_j>\overline{<Op(g)\phi_j,\phi_j>}\nonumber\\
&=&\sum_{t_j\le T}<Op(f)\phi_j,\phi_j>\overline{<Op(g)\phi_j,\phi_j>}+o(T)
\end{eqnarray}
A similar statement is true if $f_{\tau}$ is replaced by time reversal applied to $f$. Hence in this generality (and with no arithmetic assumptions) the quantum variance sums are geodesic flow and time reversal invariant to the order required in our Theorem 1, in which the quantum sum has an error term $o(1).$ 

In our arithmetic setting of $\Gamma=SL_2(\mathbb{Z})$ we can use Theorem 1 together with the relation (36) (to second order) to deduce (with or without the arithmetic weights) that as $T\rightarrow\infty$,
\begin{eqnarray}
&&\sum_{t_j\le T}<Op(f_{\tau})\phi_j,\phi_j>\overline{<Op(g)\phi_j,\phi_j>}-\sum_{t_j\le T}<Op(f)\phi_j,\phi_j>\overline{<Op(g)\phi_j,\phi_j>}\nonumber\\
&=&Q(Op(L_2(f_{\tau}-f)), g)\log T+o(\log T)\nonumber
\end{eqnarray}
In any case we deduce from  the above that $Q$ is bilinearly invariant under both the geodesic flow and time reversal.

Therefore, from the symmetry consideration as in Luo-Rudnick-Sarnak \cite{Luo0}, we know that the space of such Hermitian forms $Q(f,g)$ restricted to subspaces associated to each  representation $U_{\pi_j^k}$ is at most one dimensional. 

To use this further, we need show the orthogonality that $Q(\phi_j,\phi_k)=0$ if  $\phi_j$, $\phi_k$ are  in the different irreducible representations $\pi_j$, $\pi_k$.  It suffices to show for the generator vectors of  the representation, i.e. $Q(\phi_j,\phi_k)=0$ if $\phi_j,\phi_k$ is either 
	 holomorphic form or Maass form. To show this, we need first evaluate $Q(\phi_j,\phi_k)$ and then use the explicit Hermitian form $Q$ to deduce the self-adjointness with respect to Hecke operators. We consider the following three cases:

(a)  Both $\phi_j$ and $\phi_k$ are holomorphic;

(b) $\phi_j$ is holomorphic and $\phi_k$  is Maass form;

(c) Both $\phi_j$ and $\phi_k$ are Maass forms, while this case was dealt in \cite{Z}.

In case (a), we first use holomorphic Poincar\'{e} series to find an explicit form of $Q(P_{m_1,k_1},P_{m_2,k_2})$. 


For holomorphic Poincar\'{e} series
$$P_{m,k}(z)=\sum_{\gamma\in\Gamma_\infty\backslash\Gamma}j(\gamma, z)^{-k}e(m(\gamma
z)).$$
By unfolding, we have
\begin{eqnarray} <P_{m,k}, d\omega_j> 
&=& \int_{\Gamma_{\infty}\backslash
\mathbb{H}}e^{-2\pi my}e(mx)\varphi_j(z)\varphi_{j,k}(z)d\mu(z)
  \end{eqnarray}
Apply the Fourier expansion of  $\varphi_{j,k}(z)$ \cite{J94},
$$
\varphi_{j,k}(z)=(-1)^k\Gamma(1/2+it_j)\sum_{n\neq 0}\frac{c_j(|n|)W_{\sgn(n)k,it_j}(4\pi|n|y)e(nx)}{\sqrt{|n|}\Gamma(\frac12+\sgn(n)k+it_j)},
$$
and $$\varphi_j(z)=\sum_{n\neq 0}\frac{c_j(|n|)}{\sqrt{|n|}}W_{0,it_j}(4\pi|n|y)e(nx).$$
From the relation $c_j(n)=c_j(1)\lambda_j(n)$ and the well-known multiplicativity of  Hecke eigenvalues
$$\lambda_j(n)\lambda_j(m)=\sum_{d|(n,m)}\lambda_j\left(\frac{mn}{d^2}\right),$$
we have
\begin{eqnarray}
 <P_{m,k}, d\omega_j> &=&4\pi(-1)^k\Gamma(\frac12+it_j)c_j(1)\sum_{d|m}\sum_{q\neq0,-\frac md}\frac{c_j(q^2+\frac{qm}{d})}{\sqrt{|1+\frac{m}{qd}|}}\nonumber\\
 &&\int_0^{\infty}\frac{W_{\sgn(q)k,it_j}(y)}{\Gamma(\frac12+\sgn(q)k+it_j)}W_{0,it_j}\left(y(1+\frac{m}{qd})\right)\left(\frac{y}{qd}\right)^{k}e^{\left(\frac{-my}{2qd}\right)}\frac{dy}{y^2}.\label{4}
\end{eqnarray}
 
For the inner integral, we apply  the formula 7.671 in \cite{gr} 
\begin{eqnarray}
&&\int_0^{\infty}x^{-k-\frac32}e^{-\frac12(a-1)x}K_{\mu}(\frac12 ax)W_{k,\mu}(x)dx\nonumber\\
&=&\frac{\pi\Gamma(-k)\Gamma(2\mu-k)\Gamma(-2\mu-k)}{\Gamma(\frac12-k)\Gamma(\frac12+\mu-k)\Gamma(\frac12-\mu-k)}2^{2k+1}a^{k-\mu}F(-k,2\mu-k;-2k;1-\frac1a)\nonumber
\end{eqnarray}
by letting $a=1+m/d$, $\mu=it_j$ and for the hypergeometric series $F(-k,2\mu-k;-2k;1-\frac1a)$, we use 9.111 in \cite{gr}
\begin{eqnarray}
F(\alpha,\beta;\gamma;z)=\frac{1}{B(\beta,\gamma-\beta)}\int_0^{1}t^{\beta-1}(1-t)^{\gamma-\beta-1}(1-tz)^{-\alpha}dt\nonumber
\end{eqnarray}
By Stirling formula and similar method of calculating $<P_{h,m,k}, d\omega_j> $ in Section 2, we have 
\begin{eqnarray}
&& <P_{m,k}, d\omega_j> \nonumber\\
&=&\frac{1}{L(1,\textrm{sym}^2\varphi_j)}\sum_{d|m}\sum_{q>0}\lambda_j(q^2+\frac{qm}{d})\int_0^1\left(\frac{(1+\frac{m}{qd})}{1+\frac{2\tau m}{qd}+\tau(\frac{m}{qd})^2}\right)^{it_j}\nonumber\\
 &&(\tau(1-\tau)(1+\frac{2\tau m}{qd}+\tau(\frac{m}{qd})^2))^{k-\frac 12} \exp\left(\frac{-mt_j\sqrt{\tau(1-\tau)}}{2 dq\sqrt{1+\frac{2\tau m}{qd}+\frac{\tau m^2}{(qd)^2}}}\right)d\tau \nonumber\end{eqnarray}
By the similar treatment on Kuznetsov formula as we did in \cite{Z}, 
we obtain

\begin{IEEEeqnarray*}{rCl}
&&\lim_{T\rightarrow\infty}\frac1T\sum_{j\ge 1}u\left(\frac{t_j}{T}\right)L (1,\mathrm{sym}^2\varphi_j)\omega_j(P_{m_1,k_1})\overline{\omega}_j(P_{m_2,k_2})\nonumber\\
&=&
\int_0^{\infty}u(t)dt\sum_{\frac{m_1}{d_1}=\frac{m_2}{d_2}}\int_0^{\infty}\int_0^1
\cos(\frac{\pi
m_1}{d_1}\xi(2\tau-1))\exp\left(\frac{-m_1\xi\sqrt{\tau(1-\tau)}}{
d_1}\right) \nonumber\\
&&(\tau(1-\tau))^{k_1}d\tau\int_0^1\cos(\frac{\pi m_2}{d_2}\xi(2\eta-1))
\exp\left(\frac{-m_2\xi\sqrt{\eta(1-\eta)}}{
d_2}\right)(\eta(1-\eta))^{k_2}\\&&\cdot d\eta
\xi^{k_1+k_2}\frac{d\xi}{\xi^2}+\int_0^{\infty}u(t)dt\sum_{\substack{d_1|m_1\\
d_2|m_2}}\sum_{c\geq 1} \int\int_{\mathrm{R}^2}\Im
(\frac{S_c\zeta_8}{c^{\frac{3}{2}}}e_c(\frac{m_1m_2}{2d_1d_2}-\frac{m_1^2\xi}{4d_1^2\phi}-\frac{m_2^2\phi}{4d_2^2\xi})\nonumber\\
& &\frac{2^{\frac32}\xi^{k_1}\phi^{k_2}}{(\xi\phi)^{\frac32}}e((d_1d_2)^2\xi\phi c)) \int_0^1\int_0^1 \cos(\pi
m_1d_2\xi(2\tau-1))\cos(\pi
m_2d_1\phi(2\eta-1))\nonumber\\
& &\tau^{k_1}\eta^{k_2}(1-\tau)^{k_1}(1-\eta)^{k_2}\exp(-m_1\xi d_2\sqrt{\tau(1-\tau)})\exp(-m_2\phi
d_1\sqrt{\eta(1-\eta)})\nonumber\\
&&d\tau
d\eta d\xi d\phi
\end{IEEEeqnarray*}

Now, we can use this explicit form to show the self-adjointness of $B(\phi_j,\phi_k)$ with respect to  Hecke operators  for holomorphic $\phi_j,$ $\phi_k$, 
in fact we can check it for each Hecke operator
$T_p$, where $p$ is a prime, i.e.
\begin{proposition}
$$Q(T_pP_{m_1,k_1},P_{m_2,k_2})=Q(P_{m_1,k_1},T_pP_{m_2,k_2}).$$
\end{proposition}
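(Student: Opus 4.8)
The plan is to work directly with the explicit formula for $Q(P_{m_1,k_1},P_{m_2,k_2})$ just derived. Both sides of the asserted identity are obtained from that formula by replacing $P_{m_1,k_1}$ (resp.\ $P_{m_2,k_2}$) by $T_pP_{m_1,k_1}$ (resp.\ $T_pP_{m_2,k_2}$), and then one checks the diagonal and non-diagonal contributions match term by term. The first step is to understand how $T_p$ acts on a holomorphic Poincar\'e series: one has the standard identity $T_pP_{m,k}=P_{pm,k}+p^{k-1}P_{m/p,k}$ (with the second term present only when $p\mid m$), so that $Q(T_pP_{m_1,k_1},P_{m_2,k_2})$ and $Q(P_{m_1,k_1},T_pP_{m_2,k_2})$ each become a sum of at most two of the already-computed quantities, with the appropriate power-of-$p$ weights. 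The identity to be proved thus reduces to a purely arithmetic statement about the diagonal sums $\sum_{m_1/d_1=m_2/d_2}(\cdots)$ and about the exponential sums $S_c$ and the associated oscillatory integrals.

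For the diagonal term, the mechanism is essentially that $\lambda_j(q^2+qm/d)$ only ``sees'' $m/d$, so one should track how replacing $m_1\mapsto pm_1$ or $m_1\mapsto m_1/p$ changes the constraint $m_1/d_1=m_2/d_2$ and the set of divisors $d_1\mid m_1$; reindexing $d_1\mapsto pd_1$ and matching the weight $p^{k_1-1}$ against the Jacobian/normalization factors $\xi^{k_1+k_2-2}$, $(\tau(1-\tau))^{k_1}$ appearing in the integrand should give the equality of the diagonal parts on both sides. The substantive part is the non-diagonal term: here I would isolate the twisted Kloosterman sum expression
$$S_c=\sum_{a,b\bmod c}S\!\left(a(a+\tfrac{m_1}{d_1}),\,b(b+\tfrac{m_2}{d_2});c\right)e_c\!\left(-\big(2ab+\tfrac{m_2a}{d_2}+\tfrac{m_1b}{d_1}\big)\right)$$
and prove that the combination $S_c$ arising from $(pm_1,d_1)$ plus $p^{k_1-1}$ times the one from $(m_1/p,d_1)$ equals the combination arising from $(m_2,d_2)\mapsto$ its $T_p$-images, after the matching change of variables $\xi\mapsto p\xi$ or $\phi\mapsto p\phi$ in the $\R^2$-integral (which also rescales the phases $e_c(\tfrac{m_1m_2}{2d_1d_2}-\cdots)$ and $e((d_1d_2)^2\xi\phi c)$ consistently). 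This is the ``fundamental-lemma-style'' comparison of orbital integrals alluded to in the introduction, and carrying it out requires the twisted-Kloosterman-sum identities referenced as being proved in Propositions~4 and~5.

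The key steps, in order, are: (i) write down the action $T_pP_{m,k}=P_{pm,k}+p^{k-1}P_{m/p,k}$ and expand both sides of the claimed identity into the explicit $Q$-formula; (ii) match the diagonal contributions by reindexing the divisor sum and the $\xi$-integral; (iii) for the non-diagonal part, change variables in the double integral to absorb the $p$-scaling and reduce to an identity purely among the $S_c$'s; (iv) prove that identity for the twisted Kloosterman sums, splitting into the cases $p\mid c$ and $p\nmid c$ and using multiplicativity of $S(\cdot,\cdot;c)$ in $c$. I expect step~(iv) — the Hecke-equivariance of the exponential sums $S_c$, equivalently the required cancellation between the $p\mid c$ and $p\nmid c$ ranges after the change of variables — to be the main obstacle, since it is exactly where the arithmetic of the Kloosterman sums must conspire, and it is the content that is deferred to Propositions~4 and~5; everything else is bookkeeping with the oscillatory integrals, for which the rapid decay of $u$ and of $h_1,h_2$ controls all error terms uniformly.
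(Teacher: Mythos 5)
Your overall strategy coincides with the paper's: expand $T_p$ on the Poincar\'e series, split $Q=Q_D+Q_{ND}$, match the diagonal parts by reindexing $d_1\mapsto pd_1$ together with $\xi\mapsto\xi/p$, and reduce the off-diagonal parts to identities among the sums $S_c$ after the rescaling $\xi\mapsto\xi/p$, $\phi\mapsto p\phi$. Two problems remain, one small and one substantive. The small one: your Hecke action has the weight on the wrong term. From $T_nP_{m,k}=\sum_{d\mid(m,n)}(n/d)^{k-1}P_{mn/d^2,k}$ one gets $T_pP_{m,k}=p^{k-1}P_{pm,k}+P_{m/p,k}$, not $P_{pm,k}+p^{k-1}P_{m/p,k}$; since the whole argument is a bookkeeping match of powers of $p$ against the Jacobians $\xi^{k_1+k_2}d\xi/\xi^2$ and the factors $\xi_i^{3/2-k_i}$, this misplacement would derail the verification of every case.

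The substantive gap is your step (iv). You correctly identify that the proof must come down to identities for the twisted sums $S_c$, but you then write that this content ``is deferred to Propositions~4 and~5'' --- which is circular: the proposition you are proving \emph{is} where those identities must be established, and nothing elsewhere in the paper supplies them. Concretely, what is missing is (a) the identity $S_{c,pm_1/d_1,m_2/d_2}=S_{c,m_1/d_1,pm_2/d_2}$ (from the evaluation of $S_c$ via Sali\'e sums) needed already when $p\nmid m_1m_2$; and (b) for $p^a\parallel(m_1,m_2)$, an induction on $a$ driven by relations of the shape $S_{p^2c_1,\,|m_1p|/d_1,\,|m_2|/d_2}=p^2\bigl(1-\tfrac{\delta(p,c_1)}{p}\bigr)S_{c_1,\,|m_1|/d_1,\,|m_2|/(pd_2)}$, together with $S_{cp,a,b}=p^2S_{c,a,b}$ and the vanishing $S_{tp^2,ap,b}=0$ for $p\nmid bc$, followed by a case-by-case matching of the resulting pieces according to the $p$-divisibility of $d_1$, $d_2$, $c$ and of $m_i/d_i$. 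This case analysis, not the oscillatory-integral bookkeeping, is the actual content of the proposition; ``multiplicativity of $S(\cdot,\cdot;c)$ in $c$'' alone will not produce the cancellation between the $p\mid c$ and $p\nmid c$ ranges, because the sums $S_c$ here are the twisted averages of Kloosterman sums over $a,b \bmod c$, whose $p$-part must be evaluated explicitly.
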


\begin{proof} This is a direct generalization of Appendix A.3 in \cite{Luo2}, which deals with 
the Maass case with $k=0$. We use the fact (Theorem 6.9 in \cite{iwa})
\begin{eqnarray}  T_nP_{m,k}(z)=\sum_{d|(m,n)}\left(\frac{n}{d}\right)^{k-1}P_{\frac{mn}{d^2},k}(z),\label{14}\end{eqnarray}
and the explicit evaluation of
$S_{c,\frac{m_1}{d_1},\frac{m_2}{d_2}}(\gamma)$ (Appendix A.2 in \cite{Luo2})to verify it.

We denote $$Q(P_{m_1,k_1},P_{m_2,k_2})=Q_D(P_{m_1,k_1},P_{m_2,k_2})+Q_{ND}(P_{m_1,k_1},P_{m_2,k_2})$$ as the diagonal and non-diagonal terms, and we consider the following 4 cases:

(i) If $p\nmid m_1m_2$,
$Q_{D}(T_pP_{m_1,k_1},P_{m_2,k_2})=Q_{D}(P_{m_1,k_1},T_pP_{m_2,k_2})$;

(ii) If $p\nmid m_1m_2$,
$Q_{ND}(T_pP_{m_1,k_1},P_{m_2,k_2})=Q_{ND}(P_{m_1,k_1},T_pP_{m_2,k_2})$;

(iii) If $p^a\parallel(m_1,m_2)$,
$Q_{D}(T_pP_{m_1,k_1},P_{m_2,k_2})=Q_{D}(P_{m_1,k_1},T_pP_{m_2,k_2})$;

(iv) If $p^a\parallel(m_1,m_2)$,
$Q_{ND}(T_pP_{m_1,k_1},P_{m_2,k_2})=Q_{ND}(P_{m_1,k_1},T_pP_{m_2,k_2})$.

To prove (i), we use the fact
$$T_pP_{m,k}(z)=p^{k-1}P_{pm,k}(z)$$ from (\ref{14}). Also, from the conditions $d_1|pm_1,$ $d_2|m_2$ and
$\frac{pm_1}{d_1}=\frac{m_2}{d_2}$ we have $p|d_1$. For our convenience, we denote $$\tilde{h}(\frac{m_i\xi}{d_i},k_i,\tau_i)=\cos(\frac{\pi
m_i}{d_i}\xi(2\tau_i-1))\exp\left(\frac{-m_i\xi\sqrt{\tau_i(1-\tau_i)}}{
d_i}\right)(\tau_i(1-\tau_i))^{k_i} $$Thus, by making the
change of variables $d_1\rightarrow pd_1$, $\frac{\xi}{p}\rightarrow \xi$ and $d_2\rightarrow pd_2$, $\frac{\xi}{p}\rightarrow \xi$ for $Q_{D}(P_{pm_1,k},P_{m_2,k})$ and $Q_{D}(P_{m_1,k},P_{pm_2,k})$ respectively, 
we have
\begin{eqnarray}
&
&Q_{D}(T_pP_{m_1,k_1},P_{m_2,k_2})\nonumber\\
&=&p^{k_1-1}Q_{D}(P_{pm_1,k_1},P_{m_2,k_2})\nonumber\\
&=&p^{-1}\sum_{\frac{m_1}{d_1}=\frac{m_2}{d_2}}\int_0^{\infty}\int_0^{1}\int_0^{1}\prod_{i=1}^2\tilde{h}(\frac{m_i\xi}{d_i},l_i,\tau_i)d\tau_i\frac{\xi^{k_1+k_2}d\xi}{\xi^2}\nonumber\\
&=&p^{k_2-1}Q_{D}(P_{m_1,k_1},P_{pm_2,k_2})\nonumber\\&=&Q_{D}(P_{m_1,k_1},T_pP_{m_2,k_2}).\nonumber
\end{eqnarray}

For (ii), we have
\begin{eqnarray}
&
&Q_{ND}(T_pP_{m_1,k_1},P_{m_2,k_2})\nonumber\\
&=&p^{k_1-1}Q_{ND}(P_{pm_1,k_1},P_{m_2,k_2})\nonumber\\
&=&p^{k_1-1}\sum_{l_1=0}^{k_1}\sum_{l_2=0}^{k_2}\sum_{\substack{d_1|pm_1\\
d_2|m_2}}\sum_{c\geq 1}\int_0^{\infty}\int_0^{\infty}\int_0^{1}\int_0^{1}\Im
\{\frac{S_c\zeta_8}{c^{\frac{5}{2}}}e_c(\frac{pm_1m_2}{2d_1d_2}-\frac{p^2m_1^2\xi}{4d_1^2\phi}-\frac{m_2^2\phi}{4d_2^2\xi})\nonumber\\
&&e((d_1d_2)^2\xi\phi c)\}\tilde{h}(\frac{p\xi_1 m_1}{d_1},l_1,\tau_1)\tilde{h}(\frac{\xi_2 m_2}{d_2},l_2,\tau_2)d\tau_1d\tau_2\frac{d\xi_1d\xi_2}{\xi_1^{3/2-k_1}\xi_2^{3/2-k_2}}\nonumber
\end{eqnarray}
\begin{eqnarray}
&=&p^{-1}\sum_{l_1=0}^{k_1}\sum_{l_2=0}^{k_2}\sum_{\substack{d_1|m_1\\
d_2|m_2}}\sum_{c\geq 1}\int_0^{\infty}\int_0^{\infty}\int_0^{1}\int_0^{1}\Im
\{\frac{\tilde{S}_c\zeta_8}{c^{\frac{5}{2}}}e_c(\frac{pm_1m_2}{2d_1d_2}-\frac{p^2m_1^2\xi}{4d_1^2\phi}-\frac{m_2^2\phi}{4d_2^2\xi})\nonumber\\
&&e((d_1d_2)^2\xi\phi c)\}\tilde{h}(\frac{p\xi_1 m_1}{d_1},l_1,\tau_1)\tilde{h}(\frac{\xi_2 m_2}{d_2},l_2,\tau_2)d\tau_1d\tau_2\frac{d\xi_1d\xi_2}{\xi_1^{3/2-k_1}\xi_2^{3/2-k_2}}\nonumber\\
&&+p^{-1}\sum_{l_1=0}^{k_1}\sum_{l_2=0}^{k_2}\sum_{\substack{d_1|m_1\\
d_2|m_2}}\sum_{c\geq 1}\int_0^{\infty}\int_0^{\infty}\int_0^{1}\int_0^{1}\Im
\{\frac{S_c\zeta_8}{c^{\frac{5}{2}}}e_c(\frac{m_1m_2}{2d_1d_2}-\frac{m_1^2\xi}{4d_1^2\phi}-\frac{m_2^2\phi}{4d_2^2\xi})\nonumber\\
&&e((d_1d_2)^2\xi\phi c)\}\tilde{h}(\frac{\xi_1 m_1}{d_1},l_1,\tau_1)\tilde{h}(\frac{\xi_2 m_2}{d_2},l_2,\tau_2)d\tau_1d\tau_2\frac{d\xi_1d\xi_2}{\xi_1^{3/2-k_1}\xi_2^{3/2-k_2}}\nonumber
\end{eqnarray}
The above two sums correspond to the conditions $p\nmid d_1,$ and
$p|d_1$ respectively.

Similarly, we have
\begin{eqnarray}
&
&Q_{ND}(P_{m_1,k_1},T_pP_{m_2,k_2})\nonumber\\
&=&p^{-1}\sum_{l_1=0}^{k_1}\sum_{l_2=0}^{k_2}\sum_{\substack{d_1|m_1\\
d_2|m_2}}\sum_{c\geq 1}\int_0^{\infty}\int_0^{\infty}\int_0^{1}\int_0^{1}\Im
\{\frac{\tilde{S}'_c\zeta_8}{c^{\frac{5}{2}}}e_c(\frac{pm_1m_2}{2d_1d_2}-\frac{m_1^2\xi}{4d_1^2\phi}-\frac{p^2m_2^2\phi}{4d_2^2\xi})\nonumber\\
&&e((d_1d_2)^2\xi\phi c)\}\tilde{h}(\frac{\xi_1 m_1}{d_1},l_1,\tau_1)\tilde{h}(\frac{p\xi_2 m_2}{d_2},l_2,\tau_2)d\tau_1 d\tau_2\frac{d\xi_1d\xi_2}{\xi_1^{3/2-k_1}\xi_2^{3/2-k_2}}\nonumber\\
&&+p^{-1}\sum_{l_1=0}^{k_1}\sum_{l_2=0}^{k_2}\sum_{\substack{d_1|m_1\\
d_2|m_2}}\sum_{c\geq 1}\int_0^{\infty}\int_0^{\infty}\int_0^{1}\int_0^{1}\Im
\{\frac{S_c\zeta_8}{c^{\frac{5}{2}}}e_c(\frac{m_1m_2}{2d_1d_2}-\frac{m_1^2\xi}{4d_1^2\phi}-\frac{m_2^2\phi}{4d_2^2\xi})\nonumber\\
&&e((d_1d_2)^2\xi\phi c)\}\tilde{h}(\frac{\xi_1 m_1}{d_1},l_1,\tau_1)\tilde{h}(\frac{\xi_2 m_2}{d_2},l_2,\tau_2)d\tau_1d\tau_2\frac{d\xi_1d\xi_2}{\xi_1^{3/2-k_1}\xi_2^{3/2-k_2}}\nonumber\end{eqnarray}
 Make the change of
variables $\xi\rightarrow\frac{\xi}{p}$, $\phi\rightarrow p\phi$. Moreover, by the evaluation of the sum $S_c$ which involving the Salie sum, precisely $$S_{c,pm_1/d_1,m_2/d_2}=S_{c,m_1/d_1,pm_2/d_2}.$$ We
can see
$Q_{ND}(T_pP_{m_1,k_1},P_{m_2,k_2})=Q_{ND}(P_{m_1,k_1},T_pP_{m_2,k_2})$.

For the cases (iii) and (iv), we use the fact
$$T_pP_{m,k}(z)=p^{k-1}P_{pm,k}(z)+P_{\frac{m}{p},k}(z).$$
where if $p\nmid m$, we understand that
$P_{h(\frac{\cdot}{p}),\frac{m}{p}}(z)=0$.

Thus, for the case (iii), we have
\begin{eqnarray}
&&Q_{\infty}(T_pP_{h_1,m_1,k_1},P_{h_2,m_2,k_2})\nonumber\\
&=&p^{k_1-1}Q_{\infty}(P_{h_1(p\cdot),pm_1,k_1},P_{h_2,m_2,k_2})+Q_{\infty}(P_{h_1(\frac{\cdot}{p}),\frac{m_1}{p},k_1},P_{h_2,m_2,k_2})\nonumber\\
&=&A+B\nonumber
\end{eqnarray}
Similarly,
\begin{eqnarray}
&&Q_{D}(P_{m_1,k_1},T_pP_{m_2,k_2})\nonumber\\
&=&p^{k_2-1}Q_{D}(P_{m_1,k_1},P_{pm_2,k_2})+Q_{D}(P_{\frac{m_1}{p},k_1},P_{\frac{m_2}{p},k_2})\nonumber\\
&=&A_1+Q_1\nonumber
\end{eqnarray}

We can check that $$A(p|d_1)=A_1(p|d_2),$$
$$A(p\nmid d_1)=B_1(p\nmid d_1),$$
$$B(p\nmid d_2)=A_1(p\nmid d_2),$$
$$B(p|d_2)=B_1(p|d_1).$$
Hence, we get (iii).

 The proof of (iv) is the most tedious one and we will use
the induction to prove that. We have
\begin{eqnarray}
&
&Q_{ND}(T_pP_{m_1,k_1},P_{m_2,k_2})\nonumber\\
&=&p^{k_1-1}Q_{ND}(P_{pm_1,k_1},P_{m_2,k_2})+Q_{ND}(P_{\frac{m_1}{p},k_1},P_{m_2,k_2})\nonumber
\end{eqnarray}
From the expression of $Q(P_1,P_2)$, it equals
\begin{eqnarray}
&&p^{k_1-1}\sum_{l_1=0}^{k_1}\sum_{l_2=0}^{k_2}\sum_{\substack{d_1|pm_1\\
d_2|m_2}}\sum_{c\geq 1}\int_0^{\infty}\int_0^{\infty}\int_0^{1}\int_0^{1}\Im
\{\frac{S_c\zeta_8}{c^{\frac{5}{2}}}e_c(\frac{pm_1m_2}{2d_1d_2}-\frac{p^2m_1^2\xi}{4d_1^2\phi}-\frac{m_2^2\phi}{4d_2^2\xi})\nonumber\\
&&e((d_1d_2)^2\xi\phi c)\}\tilde{h}(\frac{p\xi_1 m_1}{d_1},l_1,\tau_1)\tilde{h}(\frac{\xi_2 m_2}{d_2},l_2,\tau_2)d\tau_1 d\tau_2\frac{d\xi_1d\xi_2}{\xi_1^{3/2-k_1}\xi_2^{3/2-k_2}}\nonumber\\
&&+\sum_{l_1=0}^{k_1}\sum_{l_2=0}^{k_2}\sum_{\substack{d_1|m_1/p\\
d_2|m_2}}\sum_{c\geq 1}\int_0^{\infty}\int_0^{\infty}\int_0^{1}\int_0^{1}\Im
\{\frac{S_c\zeta_8}{c^{\frac{5}{2}}}e_c(\frac{m_1m_2}{2pd_1d_2}-\frac{m_1^2\xi}{4p^2d_1^2\phi}-\frac{m_2^2\phi}{4d_2^2\xi})\nonumber\\
&&e((d_1d_2)^2\xi\phi c)\}\tilde{h}(\frac{m_1\xi_1/p}{d_1},l_1,\tau_1)\tilde{h}(\frac{\xi_2m_2}{d_2},l_2,\tau_2)d\tau_1d\tau_2\frac{d\xi_1d\xi_2}{\xi_1^{3/2-k_1}\xi_2^{3/2-k_2}}\nonumber
\end{eqnarray}

We denote the above sum as $I_1+I_2$. Similarly,
\begin{eqnarray}
&
&Q_{ND}(P_{m_1,k_1},T_pP_{m_2,k_2})\nonumber\\
&=&p^{k_2-1}Q_{ND}(P_{m_1},P_{pm_2})+Q_{ND}(P_{m_1},P_{\frac{m_2}{p}})\nonumber\\
&=&p^{k_2-1}\sum_{l_1=0}^{k_1}\sum_{l_2=0}^{k_2}\sum_{\substack{d_1|m_1\\
d_2|pm_2}}\sum_{c\geq 1}\int_0^{\infty}\int_0^{\infty}\int_0^{1}\int_0^{1}\Im
\{\frac{S_c\zeta_8}{c^{\frac{5}{2}}}e_c(\frac{pm_1m_2}{2d_1d_2}-\frac{m_1^2\xi}{4d_1^2\phi}-\frac{p^2m_2^2\phi}{4d_2^2\xi})\nonumber\\
&&e((d_1d_2)^2\xi\phi c)\}\tilde{h}(\frac{\xi_1m_1}{d_1},l_1,\tau_1)\tilde{h}(\frac{p\xi_2m_2}{d_2},l_2,\tau_2)d\tau_1d\tau_2\frac{d\xi_1d\xi_2}{\xi_1^{3/2-k_1}\xi_2^{3/2-k_2}}\nonumber\\
&&+\sum_{l_1=0}^{k_1}\sum_{l_2=0}^{k_2}\sum_{\substack{d_1|m_1\\
d_2|m_2/p}}\sum_{c\geq 1}\int_0^{\infty}\int_0^{\infty}\int_0^{1}\int_0^{1}\Im
\{\frac{S_c\zeta_8}{c^{\frac{5}{2}}}e_c(\frac{m_1m_2}{2pd_1d_2}-\frac{m_1^2\xi}{4d_1^2\phi}-\frac{m_2^2\phi}{4p^2d_2^2\xi})\nonumber\\
&&e((d_1d_2)^2\xi\phi c)\}\tilde{h}(\frac{\xi_1m_1}{d_1},l_1,\tau_1)\tilde{h}(\frac{m_2\xi_2/p}{d_2},l_2,\tau_2)d\tau_1d\tau_2\frac{d\xi_1d\xi_2}{\xi_1^{3/2-k_1}\xi_2^{3/2-k_2}}\nonumber
\end{eqnarray}

According to whether or not $p|(c,\ast,\ast)$ in $S_{c,\ast,\ast}$,
we can decompose the above sums $I_1$, $I_2$, $II_1$, $II_2$ into
the following 8 terms$$I_1=I_{11}+I_{12}, \quad I_2=I_{21}+I_{22},
\quad II_1=II_{11}+II_{12}, \quad II_2=II_{21}+II_{22}.$$ Note if
$p|(c,\ast,\ast)$, $S_{c,\ast,\ast}=0$ unless $p^2|c$. Let
$c=p^2c_1$, we have
$$S_{c,\frac{|m_1p|}{d_1},\frac{|m_2|}{d_2}}=S_{c_1,\frac{|m_1|}{d_1},\frac{|m_2|}{pd_2}}p^2(1-\frac{\delta(p,c_1)}{p}),$$
where $\delta(p,c_1)=0$ if $p|c_1$; $\delta(p,c_1)=1$ if $p\nmid
c_1$. Hence we can write $I_{11}=I'_{11}-I''_{11}$ correspondingly.

Similarly we have
$$S_{c,\frac{|m_1|}{pd_1},\frac{|m_2|}{d_2}}=S_{c_1,\frac{|m_1|}{p^2d_1},\frac{|m_2|}{pd_2}}p^2(1-\frac{\delta(p,c_1)}{p}),$$
and write $I_{21}=I'_{21}-I''_{21}$,
$$S_{c,\frac{|m_1|}{d_1},\frac{|m_2p|}{d_2}}=S_{c_1,\frac{|m_1|}{pd_1},\frac{|m_2|}{d_2}}p^2(1-\frac{\delta(p,c_1)}{p}),$$
and write $II_{11}=II'_{11}-II''_{11}$,
$$S_{c,\frac{|m_1|}{d_1},\frac{|m_2|}{pd_2}}=S_{c_1,\frac{|m_1|}{pd_1},\frac{|m_2|}{p^2d_2}}p^2(1-\frac{\delta(p,c_1)}{p}),$$
and write $II_{21}=II'_{21}-II''_{21}$ corresponding $p|c_1$ or not.

By the induction hypothesis on $(\frac{m_1}{p},\frac{m_2}{p})$, we
have $I'_{11}+I'_{21}=II'_{11}+II'_{21}.$

We have $S_{cp,a,b}=p^2S_{c,a,b}$ and $S_{tp^2,ap,b}=0$ if $p\nmid
bc$. Using this and the evaluation of $S_{c,a,b}$ we can verify that
$$I_{12}(p|d_1)=II_{12}(p|d_2),$$
where $I_{12}(p|d_1)$ means the partial sum of $I_{12}$ in which
$p|d_1$. Similarly, we have
$$I_{12}(p\nmid d_1,p\nmid d_2,p\nmid c)=II_{12}(p\nmid d_2,p\nmid d_1,p\nmid c),$$
$$I_{12}(p\nmid d_1,p\parallel d_2,p\nmid c)=II_{12}(p\nmid d_2,p\parallel d_1,p\nmid c),$$
$$I_{12}(p\nmid d_1,p^2| d_2,p\nmid c)=I''_{11}(p\nmid d_1,p^2|m_2/d_1),$$
$$I_{12}(p\nmid d_1,p^2| d_2,p\nmid c)=I''_{11}(p\nmid d_1,p\parallel m_2/d_2),$$
$$II''_{11}(p\nmid d_2,p^2|m_1/d_1)=II_{12}(p\nmid d_2,p^2| d_1,p\nmid c),$$
$$II''_{11}(p\nmid d_2,p\parallel m_1/d_1)=II_{12}(p\nmid d_2,p\nmid c),$$
$$I''_{11}(p| d_1)=II''_{11}(p|d_2),$$
$$I_{22}(p|d_2)=II_{22}(p|d_1),$$
$$I_{22}(p\nmid d_2,p\nmid d_1,p\nmid c)=II_{22}(p\nmid d_1,p\nmid d_2,p\nmid c),$$
$$I_{22}(p\nmid d_2,p\parallel d_1,p\nmid c)=II_{22}(p\nmid d_1,p\parallel d_2,p\nmid c),$$
$$I_{22}(p\nmid d_2,p^2| d_1,p\nmid c)=I''_{21}(p\nmid d_2,p^3|m_1/d_1),$$
$$I_{22}(p\nmid d_2,p| c)=I''_{21}(p\nmid d_2,p^2\parallel m_1/d_1),$$
$$II''_{21}(p| d_1)=I''_{21}(p| d_2),$$
$$II_{22}(p\nmid d_1,p^2|d_2,p\nmid c)=II''_{21}(p\nmid d_1,p^3|m_2/d_2),$$
$$II_{22}(p\nmid d_1,p| c)=II''_{21}(p\nmid d_1,p^2\parallel m_2/d_2).$$
Hence we deduce from the above identities that
$$Q_{ND}(T_pP_{m_1,k_1},P_{m_2,k_2})=Q_{ND}(P_{m_1,k_1},T_pP_{m_2,k_2}).$$
This completes the proof of
$$Q(T_pP_{m_1,k_1},P_{m_2,k_2})=Q(P_{m_1,k_1},T_pP_{m_2,k_2})$$ for
each $T_p$, $p$ is a prime. 
\end{proof}

For case (b), we need consider $Q(P_{m_1,k_1}, P_{h,m_2})$ and analyze the self-adjointness with Hecke operator in this case. Using the formula of $<Op(P_{m,k})\phi_j,\phi_j>$ which we just evaluated  above and the formula of $<Op(P_{h,m})\phi_j, \phi_j>$  in \cite{Z}, we have 

\begin{IEEEeqnarray*}{rCl}
&&Q(P_{m_1,k_1}, P_{h,m_2})
\nonumber\\
&=&
\sum_{\frac{m_1}{d_1}=\frac{m_2}{d_2}}\int_0^{\infty}\int_0^1
\cos(\frac{\pi
m_1}{d_1}\xi(2\tau-1))\exp\left(\frac{-m_1\xi\sqrt{\tau(1-\tau)}}{
d_1}\right)(\tau(1-\tau))^{k_1}d\tau \nonumber\\
&&\int_0^1 \frac{\cos(\frac{\pi m_2}{d_2}\xi(2\eta-1))
h\left(\frac{\xi\sqrt{\eta(1-\eta)}}{
d_2}\right)}{\eta(1-\eta)}d\eta
\frac{\xi^{k_1}d\xi}{\xi^2}+\\&&\sum_{\substack{d_1|m_1\\
d_2|m_2}}\sum_{c\geq 1} \Im
(\frac{S_c\zeta_8}{c^{\frac{3}{2}}}\int\int_{\mathrm{R}^2}e_c(\frac{m_1m_2}{2d_1d_2}-\frac{m_1^2\xi}{4d_1^2\phi}-\frac{m_2^2\phi}{4d_2^2\xi})\frac{2^{\frac32}}{(\xi\phi)^{\frac32}}\nonumber\\
& &e((d_1d_2)^2\xi\phi c) \int_0^1\int_0^1 \frac{\cos(\pi
m_1d_2\xi(2\tau-1))\cos(\pi
m_2d_1\phi(2\eta-1))(\tau(1-\tau))^{k_1}}{\eta(1-\eta)}\nonumber\\
& &\exp(-m_1\xi d_2\sqrt{\tau(1-\tau)})h(\phi
d_1\sqrt{\eta(1-\eta)})d\tau
d\eta d\xi d\phi)
\end{IEEEeqnarray*}
Note that $P_{h,m}$ is a weight $0$ Poincar\'{e} series and  under the Hecke operator, we have
$$T_nP_{h,m}(z)=\sum_{d|(m,n)}(\frac{d^2}{n})^{\frac12}P_{h(\frac{ny}{d^2}),\frac{mn}{d^2}}(z).$$
A similar argument about the self-adjointness with respect to Hecke operator  works for $Q(P_{m_1,k_1}, P_{h,m_2})$, i.e.
$$Q(T_pP_{m_1,k_1}, P_{h,m_2})=Q(P_{m_1,k_1}, T_pP_{h,m_2}).$$

For case (c) of $\phi_j$ and $\phi_k$ both being Maass forms, it was shown in \cite{Z}.  
Thus, combining these three cases, the Hermitian form $Q(\cdot,\cdot)$
defined on the space spanned by $P_{m,k}$'s is self-adjoint with
respect to the Hecke operators $T_n$, $n\geq 1$. Hence, for the   generating vectors $\phi_j,\phi_k$ of  each irreducible representation, we obtain
\begin{proposition}
 $$Q(T_n\phi_j,\phi_k)=Q(\phi_j,T_n\phi_k)$$ if $\phi_j,\phi_k$ is either weight $k$ holomorphic form or Maass form. \end{proposition}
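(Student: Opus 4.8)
The plan is to build the statement from the prime-index identities already established, via two routine bootstraps: from a single prime to an arbitrary $n$, and from the Poincar\'e series on which those identities are proved to the generating vectors of the irreducible representations.

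\emph{Reduction to prime index.} Let $V$ be the span of all holomorphic, antiholomorphic, and Maass cusp forms (of all weights). At level one the Hecke operators satisfy $T_{p^{\ell+1}}=T_pT_{p^\ell}-T_{p^{\ell-1}}$ and $T_{mn}=T_mT_n$ for $(m,n)=1$, so every $T_n$ is an integer polynomial in the $T_p$; since the $T_p$ preserve weight and holomorphy, $V$ is stable under each $T_p$, and the $T_p$ commute on $V$. Hence, once we know that $Q$ restricted to $V$ is self-adjoint with respect to every $T_p$, moving the $T_p$ across $Q$ one factor at a time shows $Q|_V$ is self-adjoint with respect to every $T_n$; in particular $Q(T_n\phi_j,\phi_k)=Q(\phi_j,T_n\phi_k)$ for the generating vectors $\phi_j,\phi_k$. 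So it remains to prove $Q(T_pf,g)=Q(f,T_pg)$ for a fixed prime $p$ and all $f,g\in V$, and by sesquilinearity it is enough to take $f,g$ each a single holomorphic or Maass cusp form.

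\emph{From Poincar\'e series to cusp forms, via the three cases.} If $f$ and $g$ are holomorphic, write each as a \emph{finite} linear combination of holomorphic Poincar\'e series $P_{m,k}$ (which span $S_k(\Gamma)$) and propagate the identity $Q(T_pP_{m_1,k_1},P_{m_2,k_2})=Q(P_{m_1,k_1},T_pP_{m_2,k_2})$ of the preceding Proposition term by term. In the mixed case, argue the same way on the holomorphic slot while, on the Maass slot, approximate $\varphi_j$ by incomplete Poincar\'e series $P_{h,m}$ and pass to the limit in the identity $Q(T_pP_{m_1,k_1},P_{h,m_2})=Q(P_{m_1,k_1},T_pP_{h,m_2})$ established above; the polynomial-in-$(|m|+1)$ and $\|h\|_A$ bound of Proposition~\ref{prop:1} is precisely what legitimizes the limit. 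The case of two Maass forms is \cite{Z}. Finally, a Maass form in the first slot reduces to a Maass form in the second by the Hermitian symmetry of $Q$, so all pairs are covered and the Proposition follows.

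I expect the substantive work to lie entirely in the identities already in hand — the explicit evaluation of the twisted Kloosterman/Sali\'e sums $S_c$ and the relations $S_{c,pm_1/d_1,m_2/d_2}=S_{c,m_1/d_1,pm_2/d_2}$ together with their ramified-weight refinements used in case (iv) of the preceding Proposition — so that for the present statement only the bookkeeping of the prime-to-$n$ induction and the density of Poincar\'e series remains. The one step that genuinely needs care is the \emph{uniformity} of the limiting argument over the infinitely many $P_{h,m}$ entering an approximation of a Maass form, for which the $\|\cdot\|_A$-type estimate of Proposition~\ref{prop:1}, rather than mere pointwise convergence, is the essential tool.
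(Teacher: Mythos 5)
Your proposal is correct and follows essentially the same route as the paper: the substantive input is the $T_p$ self-adjointness identities for Poincar\'e series established in the three cases (holomorphic/holomorphic, mixed, and Maass/Maass via \cite{Z}), and Proposition 5 is then deduced by generating the full Hecke algebra from the $T_p$ and by passing from Poincar\'e series to the generating cusp forms (finite spanning in the holomorphic case, approximation controlled by the bound of Proposition~\ref{prop:1} in the Maass case). The paper states this deduction very tersely; your write-up merely makes explicit the same two bootstraps, including the uniformity point the paper delegates to the approximation argument of \cite{Luo2}.
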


From  this, we have 
$$\lambda_n(\phi_j)Q(\phi_j,\phi_k)=\lambda_n(\phi_k)Q(\phi_j,\phi_k).$$ Since there is an $n$ such that $\lambda_n(\phi_j)\neq\lambda_n(\phi_k)$ if $\phi_j$, $\phi_k$ are  generator vectors of two distinct irreducible representations, 
 we  deduce the orthogonality,  $Q(\phi_j,\phi_k)=0$ if  $\phi_j$, $\phi_k$ are  in distinct eigenspaces of the orthogonal decomposition $(21)$.

In the next section we calculate the eigenvalue of $B$ on such a generating
Maass-Hecke cusp form.

\section{Eigenvalue of $Q$} 
\vspace{.8mm}
  In this section, we shall evaluate the weighted quantum variance on each eigenspace $U_{\pi_j^k}$ by applying Woodbury's explicit formula for the Ichino's trilinear formula with special vectors (see Appendix A), Rankin-Selberg
theory, Kuznetsov formula and a principle observed in Luo-Rudnick-Sarnak (Remark 1.4.3 and Prop. 3.1 in \cite{Luo0}). 
\begin{proposition}
    For weight $k$ holomorphic Hecke eigenform $f$  with $\|f\|_2=1$, 
    we have
\begin{eqnarray}
&&\lim_{T\rightarrow\infty}\frac1T\sum_{j\ge 1}u\left(\frac{t_j}{T}\right)L (1,\mathrm{sym}^2\varphi_j)|\omega_j(f)|^2= 2^{k-1}\frac{\Gamma^2(\frac k2)}{\Gamma(k)}L(\frac12,f).\nonumber
\end{eqnarray}
\vspace{.5mm}
\end{proposition}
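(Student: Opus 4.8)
\emph{Overview.} The plan is to turn $|\omega_j(f)|^2$ into a product of special $L$-values by the explicit triple product formula of Appendix~A, and then to evaluate the resulting spectral average by a first-moment argument. Unfolding as in (41)--(42), $\omega_j(f)=\langle f,d\omega_j\rangle$ is the period of the three automorphic forms $f$, $\varphi_j$, and the weight-$k$ shifted form $\varphi_{j,k/2}$, lying respectively in $\pi_f$ (at its lowest weight vector), in $\pi_j$ (at its spherical vector), and in $\pi_j$ (at its weight-$k$ vector). All the data is unramified at every finite place, so Ichino's trilinear form, in which the normalized local factors are $1$ at all finite places, gives
\[
|\omega_j(f)|^2=\mathcal{G}_k(t_j)\,\frac{L(\tfrac12,\pi_f\times\pi_j\times\pi_j)}{L(1,\mathrm{sym}^2 f)\,L(1,\mathrm{sym}^2\varphi_j)^2},
\]
where $\mathcal{G}_k(t_j)$ collects the archimedean local integral --- taken against the \emph{non-spherical} weight-$k$ vector of $\pi_j$ at $\infty$, which is exactly the situation handled in Appendix~A --- together with the archimedean $L$-factors; Woodbury's evaluation exhibits $\mathcal{G}_k(t_j)$ as an explicit elementary function, polynomially controlled in $t_j$, whose leading constant carries the Beta-value $\Gamma(k/2)^2/\Gamma(k)=B(k/2,k/2)$ arising from pairing the weight-$k$ vector against the lowest weight holomorphic vector. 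Since $\pi_j\otimes\pi_j\cong\mathrm{sym}^2\pi_j\oplus\mathbf{1}$, we have $L(s,\pi_f\times\pi_j\times\pi_j)=L(s,f)\,L(s,f\times\mathrm{sym}^2\varphi_j)$, so the target value $L(\tfrac12,f)$ splits off and, after one power of $L(1,\mathrm{sym}^2\varphi_j)$ in the arithmetic weight cancels,
\[
\frac1T\sum_{j\ge1}u\!\Big(\tfrac{t_j}{T}\Big)L(1,\mathrm{sym}^2\varphi_j)|\omega_j(f)|^2=\frac{L(\tfrac12,f)}{L(1,\mathrm{sym}^2 f)}\cdot\frac1T\sum_{j\ge1}u\!\Big(\tfrac{t_j}{T}\Big)\mathcal{G}_k(t_j)\,\frac{L(\tfrac12,f\times\mathrm{sym}^2\varphi_j)}{L(1,\mathrm{sym}^2\varphi_j)}.
\]

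\emph{First moment.} It remains to show the last average tends to $2^{k-1}\Gamma(k/2)^2\Gamma(k)^{-1}L(1,\mathrm{sym}^2 f)$. I would insert the approximate functional equation for the degree-$6$ central value $L(\tfrac12,f\times\mathrm{sym}^2\varphi_j)$ (analytic conductor $\asymp t_j^4$, with root number of sign depending only on $k$, so the two halves of the equation contribute comparably), expand its Dirichlet coefficients via the Rankin--Selberg theory of the convolution $f\times\varphi_j\times\varphi_j$ in terms of the Hecke eigenvalues $\lambda_f$ and $\lambda_{\varphi_j}$, and then apply the Kuznetsov formula --- with the arithmetically natural weight $L(1,\mathrm{sym}^2\varphi_j)^{-1}$ --- to the resulting bilinear sums $\sum_j\lambda_{\varphi_j}(a)\lambda_{\varphi_j}(b)L(1,\mathrm{sym}^2\varphi_j)^{-1}h(t_j)$. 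The Kloosterman (off-diagonal) contribution is $O(T^{-\delta})$ by Weil's bound and stationary-phase estimates for the attendant Bessel transforms, exactly as in Section~2, while the diagonal $a=b$ leaves a Dirichlet series in $\lambda_f$ which, together with $\mathcal{G}_k(t_j)$ and the Plancherel density $t\tanh(\pi t)$, is identified by Rankin--Selberg theory and the principle of \cite{Luo0} (Remark 1.4.3 and Prop.~3.1) as $2^{k-1}\Gamma(k/2)^2\Gamma(k)^{-1}L(1,\mathrm{sym}^2 f)$ in the limit; combining with the display above gives the claimed value $2^{k-1}\Gamma(k/2)^2\Gamma(k)^{-1}L(\tfrac12,f)$.

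\emph{Main obstacle.} Two points are decisive. First is the explicit archimedean local integral for the ramified vector $\varphi_{j,k/2}$: this is precisely what Appendix~A supplies, and it is the mechanism by which the Beta-value $\Gamma(k/2)^2/\Gamma(k)$ enters the final formula. Second is the uniform control of the first moment: since the conductor $t_j^4$ grows through the family, one must truncate the approximate functional equation and estimate the oscillatory Bessel integrals from Kuznetsov with enough uniformity in $t_j$ and in the lengths of the $a,b$ sums that only the diagonal survives, and then extract the \emph{precise} constant --- not merely the order of magnitude --- from that diagonal via Rankin--Selberg and the \cite{Luo0} principle. The remaining manipulations (Stirling asymptotics for $\mathcal{G}_k$, removing the $\tanh$, Mellin bookkeeping) are routine and run parallel to Section~2.
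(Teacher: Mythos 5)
Your proposal follows essentially the same route as the paper: Ichino's trilinear formula made explicit by Woodbury's archimedean computation in Appendix~A, the factorization $L(s,\pi_f\times\pi_j\times\pi_j)=L(s,f)L(s,f\times\mathrm{sym}^2\varphi_j)$ so that $L(\frac12,f)$ splits off, then an approximate functional equation for the degree-$6$ central value, Kuznetsov with the harmonic weight, a negligible off-diagonal, and a diagonal identified via $\sum_n\lambda_f(n^2)n^{-s}=L(s,\mathrm{sym}^2 f)/\zeta(2s)$ which cancels against the Petersson norm of $f$ to produce $2^{k-1}\Gamma(\frac k2)^2\Gamma(k)^{-1}L(\frac12,f)$. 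This matches the paper's proof in both structure and in the identification of the decisive inputs, so no further comment is needed.
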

\begin{proof}
Let $\Lambda(s,\varphi_j)$ be the associated completed $L$-function of $\varphi_j$, which
admits analytic continuation to the whole complex plane and
satisfies the functional equation:
$$\Lambda(s,\varphi_j):=\pi^{-s}\Gamma\left(\frac{s+it_{j}}{2}\right)\Gamma\left(\frac{s-it_{j}}{2}\right)L(s,\varphi_j)=\Lambda(1-s,\varphi_j).$$
Moreover, we have
$$\Lambda(s,\mathrm{sym}^2(\varphi_j))=\pi^{-3s/2}\Gamma\left(\frac s2\right)\Gamma\left(\frac{s}{2}+it_{j}\right)\Gamma\left(\frac{s}{2}-it_{j}\right)L(s,\mathrm{sym}^2\varphi_j).$$

For weight $k$ holomorphic Hecke eigenform $f$, we have the associated completed $L$-function, $$\Lambda(s,f):=\pi^{-s}\Gamma\left(\frac{s+\frac{k-1}{2}}{2}\right)\Gamma\left(\frac{s+\frac{k+1}{2}}{2}\right)L(s,f).$$

Thus, we obtain  the  Rankin-Selberg $L$-function,

\begin{eqnarray}
\Lambda(s, f\otimes\mathrm{sym}^2\varphi_j)&=&\pi^{-3s}\Gamma\left(\frac{s+\frac{k-1}{2}}{2}\right)\Gamma\left(\frac{s+\frac{k-1}{2}}{2}+it_j\right)\Gamma\left(\frac{s+\frac{k-1}{2}}{2}-it_j\right)\nonumber\\
&&\Gamma\left(\frac{s+\frac{k+1}{2}}{2}\right)\Gamma\left(\frac{s+\frac{k+1}{2}}{2}+it_j\right)\Gamma\left(\frac{s+\frac{k+1}{2}}{2}-it_j\right)\nonumber\\
&&L(s,f\otimes\mathrm{sym}^2\varphi_j),\nonumber
\end{eqnarray}

By Ichino's general trilinear formula \cite{ichino} and its explication in the Appendix with the explicit vectors at hand, we can express the  triple product integrals of eigenforms in terms of  the  Rankin-Selberg $L$-function $\Lambda(s, f\otimes\mathrm{sym}^2\varphi_j)$ as follows;

\begin{eqnarray}
|<Op(f)\varphi_j,\varphi_j>|^2=\frac{1}{2^4}\cdot\frac{\Lambda(\frac12,f\otimes\varphi_j\otimes\varphi_j)}{\Lambda(1,\mathrm{sym}^2\varphi_j)^2\Lambda(1,\mathrm{sym}^2f)}\cdot\frac{2^{k-1}\pi^k}{(\frac{1}{2}+it_j)_{\frac{k}{2}}(\frac{1}{2}-it_j)_{\frac{k}{2}}}\nonumber
\end{eqnarray}
where $(z)_m=z(z+1)\cdots(z+m-1)$ . 
The local factors at $\infty$ place is (Lemma 8 in Woodbury's calculation),
$$\zeta_{\mathbb{R}}(2)^2\cdot\frac{L_{\infty}(\frac12,f\otimes\varphi_j\otimes\varphi_j)}{L_{\infty}(1,\mathrm{sym}^2\varphi_j)^2L_{\infty}(1,\mathrm{sym}^2f)}=\frac{|\Gamma(\frac{k}{2}+2it_j)|^2|\Gamma(\frac{k}{2})|^2}{2^{k-3}\pi^{k-1}}\Gamma(k)|\Gamma(\frac12+it_j)|^4$$


By Stirling formula and the duplication formula of  the Gamma factors, it amounts to
\begin{eqnarray}
&&|<Op(f)\varphi_j,\varphi_j>|^2\nonumber\\
&=&
\frac{L(\frac12,f)L(\frac12,f\otimes\mathrm{sym}^2(\varphi_j))|\Gamma(\frac k2)|^2|a_{j}(1)|^2}{ 4\pi^{-1}t_j\cosh \pi t_jL(1,\mathrm{sym}^2\varphi_j)L(1,\mathrm{sym}^2f)}(1+O(t_{j}^{-1}))\nonumber
\end{eqnarray}

where $a_j(n)$ is the $n$-th Fourier coefficient of $\varphi_j$ with $\|\varphi_j\|_2=1$ and  $$|a_j(1)|^2=\frac{2\cosh \pi t_j}{L(1,\textrm{sym}^2\varphi_j)},$$ 

Next we apply the approximate functional equation of $L(s,f\otimes\mathrm{sym}^2\varphi_j)$, and Kuznetsov formula to evaluate the variance sum in the Proposition. We compute $$\sum_{j\geq
1}u\left(\frac{t_j}{T}\right)L(1,\mathrm{sym}^2\phi_j)|<Op(f)\varphi_j,\varphi_j>|^2$$

Let $\Phi$ be the cuspidal automorphic form on $GL(3)$ which is the
Gelbart-Jacquet lift of the cusp form $\phi$, with the Fourier
coefficients $a_{\Phi}(m_1,m_2)$ \cite{Bu}, where
$$a_{\Phi}(m_1,m_2)=\sum_{d|(m_1,m_2)}\lambda_{\Phi}(\frac{m_1}{d},1)\lambda_{\Phi}(\frac{m_2}{d},1)\mu(d),$$
and $$\lambda_{\Phi}(r,1)=\sum_{s^2t=r}\lambda_{\phi}(t^2).$$ The
Rankin-Selberg convolution $L(s,f\otimes\mathrm{sym}^2\varphi_j)$
is represented by the Dirichlet series,
$$L(s,f\otimes\mathrm{sym}^2\varphi_j)=\sum_{m_1,m_2\ge 1}\lambda_{f}(m_1)a_{\Phi_j}(m_1,m_2)(m_1m_2^2)^{-s},$$
where $\lambda_{f}(r)$ is the $r$-th Hecke eigenvalue of
$f$.\\
Since
$$\Lambda(1/2,f\otimes\mathrm{sym}^2\varphi)=\frac{1}{\pi i}\int_{(2)}\Lambda(s+1/2,f\otimes\mathrm{sym}^2\varphi)\frac{ds}{s}.$$
we have the following approximate functional
equation,$$L(1/2,f\otimes\mathrm{sym}^2\varphi_j)=2\sum_{m_1,m_2\ge
1}\lambda_{f}(m_1)a_{\Phi_j}(m_1,m_2)(m_1m_2^2)^{-1/2}V_{t_j}(m_1m_2^2)$$
where
\begin{eqnarray}V_{t_j}(y)&=&\frac{1}{2\pi i}\int_{(2)}y^{-s}\frac{\gamma(1/2+s,f\otimes\mathrm{sym}^2\varphi_j)}{\gamma(1/2,f\otimes\mathrm{sym}^2\varphi_j)}\frac{ds}{s}\nonumber\\
&=&\frac{1}{2\pi i}\int_{(2)}(1+P_{t_j}(s))\frac{\Gamma\left(\frac{s+\frac{k+1}{2}}{2}\right)\Gamma\left(\frac{s+\frac{k-1}{2}}{2}\right)}{\Gamma\left(\frac k4+\frac 14\right)\Gamma\left(\frac k4-\frac14\right)}\left(\frac{y}{t_j^2}\right)^{-s}\frac{ds}{s}\nonumber
\end{eqnarray}
where $$P_t(s)=\sum_{1\le r\le
10}\frac{p_{r+1}(s)}{t^r}+O(\frac{|s|^{12}}{t^{11}})$$ is an analytic
function in $\mathcal{R}s\ge -2$. $p_{r}(s)$ is a polynomial of
degree at most $r$  and independent of $t$. And the Gamma factor is
\begin{eqnarray}
\gamma(s,f\otimes\mathrm{sym}^2\varphi_j)&=&\pi^{-3s}\Gamma\left(\frac{s+\frac{k-1}{2}}{2}\right)\Gamma\left(\frac{s+\frac{k-1}{2}}{2}+it_j\right)\Gamma\left(\frac{s+\frac{k-1}{2}}{2}-it_j\right)\nonumber\\
&&\Gamma\left(\frac{s+\frac{k+1}{2}}{2}\right)\Gamma\left(\frac{s+\frac{k+1}{2}}{2}+it_j\right)\Gamma\left(\frac{s+\frac{k+1}{2}}{2}-it_j\right)\nonumber
\end{eqnarray}
Thus, by writing $$\Gamma(\frac{k+1}{2}+it)=\Gamma(\frac12+it)(\frac12+it)_{\frac k2}$$ and duplication formula of Gamma functions, we have
\begin{eqnarray}
&&\sum_{j\geq
1}u\left(\frac{t_j}{T}\right)L(1,\mathrm{sym}^2\varphi_j)|<Op(f)\varphi_j,\varphi_j>|^2\nonumber\\
&=& \frac{\pi}{4}L(\frac12,f)|\Gamma (\frac k2)|^2\sum_{t_j\geq
1}u\left(\frac{t_j}{T}\right)\frac{|a_j(1)|^2}{\cosh \pi t_j}L(1/2,f\otimes\mathrm{sym}^2(\phi_j))+O(\log T)\nonumber\\
&=& \frac{\pi}{4}L(\frac12,f)|\Gamma (\frac k2)|^2\sum_{t_j\geq
1}u\left(\frac{t_j}{T}\right) \frac{|a_j(1)|^2}{\cosh \pi t_j}\nonumber\\
&&\sum_{m_1,m_2\ge
1}\lambda_{f}(m_1)a_{\Phi_j}(m_1,m_2)(m_1m_2^2)^{-1/2}V_{t_j}(m_1m_2^2)+O(\log T)\nonumber
\\
&=& \frac{\pi}{4}L(\frac12,f)|\Gamma (\frac k2)|^2\sum_{t_j\geq
1}\sum_{d\geq 1}\frac{\mu(d)}{d^{\frac32}}\sum_{n_1,n_2\geq
1}\lambda_{f}(dn_1)V_{t_j}(d^3n_1n_2^2)(n_1n_2^2)^{-1/2}\nonumber\\
&&u\left(\frac{t_j}{T}\right)\frac{|a_j(1)|^2}{\cosh \pi t_j}\lambda_{\Phi_j}(n_1,1)\lambda_{\Phi_j}(n_2,1)+O(\log T)\nonumber
\end{eqnarray}
\begin{eqnarray}
&=& \frac{\pi}{4}L(\frac12,f)|\Gamma (\frac k2)|^2\sum_{t_j\geq
1}\sum_{d\geq
1}\frac{\mu(d)}{d^{\frac32}}\sum_{s_1,s_2,w_1,w_2\geq
1}\lambda_{f}(ds_1^2w_1)V_{t_j}(d^3s_1^2w_1s_2^4w_2^2)(s_1^2w_1s_2^4w_2^2)^{-1/2}\nonumber\\
&&u\left(\frac{t_j}{T}\right)\frac{|a_j(1)|^2}{\cosh \pi t_j}\lambda_j(w_1^2)\lambda_j(w_2^2)+O(\log T)\nonumber
\\
&=& \frac{\pi}{4}L(\frac12,f)|\Gamma (\frac k2)|^2\sum_{d\geq
1}\frac{\mu(d)}{d^{\frac32}}\sum_{s_1,s_2,w_1,w_2\geq
1}\lambda_{f}(ds_1^2w_1)(s_1^2w_1s_2^4w_2^2)^{-1/2}\nonumber\\
&&\sum_{t_j\geq
1}V_{t_j}(d^3s_1^2w_1s_2^4w_2^2)u\left(\frac{t_j}{T}\right)\frac{|a_j(1)|^2}{\cosh \pi t_j}\lambda_j(w_1^2)\lambda_j(t_2^2)+O(\log T)\nonumber
\end{eqnarray}
For the inner sum, by the Kuznetsov formula, we have
\begin{eqnarray}
&&\sum_{t_j\geq
1}V_{t_j}(d^3s_1^2w_1s_2^4w_2^2)u\left(\frac{t_j}{T}\right) \frac{|a_j(1)|^2}{\cosh \pi t_j} \lambda_j(w_1^2)\lambda_j(w_2^2)\nonumber\\
&=&\frac{\delta(w_1,w_2)}{\pi^2}\int_{-\infty}^{\infty}V_t(d^3s_1^2s_2^4w_1w_2^2)u\left(\frac{t}{T}\right)t\tanh(\pi t)dt\nonumber
\\
&&-\frac{2}{\pi}\int_0^{\infty}V_t(d^3s_1^2w_1s_2^4w_2^2)\frac{u\left(\frac{t}{T}\right)}{|\zeta(1+2it)|^2}d_{it}(w_1^2)d_{it}(w_2^2)dt\nonumber\\
&&+\frac{2i}{\pi}\sum_{c\ge
1}\frac{S(w_1^2,w_2^2;c)}{c}\int_{-\infty}^{\infty}J_{2it}(\frac{4\pi
t_1t_2}{c})V_t(d^3s_1^2s_2^4w_1w_2^2) u\left(\frac{t}{T}\right)\frac{tdt}{\cosh(\pi
t)}\nonumber
\end{eqnarray}

 We will estimate the above three sums respectively.



In the diagonal term, let $w=dw_1=dw_2$,  
\begin{eqnarray}
&&\sum_{d\geq
1}\frac{\mu(d)}{d^{\frac32}}\sum_{s_1,s_2,w_1=w_2\geq
1}\lambda_{f}(ds_1^2w_1)(s_1^2w_1s_2^4w_2^2)^{-1/2}V_{t}(d^3s_1^2w_1s_2^4w_2^2)\nonumber\\
&=&\sum_{w\geq 1}w^{-\frac 32}\sum_{d|w}\mu(d)\sum_{s_2\ge
1}s_2^{-2} \sum_{s_1\ge
1}s_1^{-1}\lambda_{f}(ws_1^2)V_t(w^3s_1^2s_2^4)\nonumber\\
&=&\sum_{s_2\ge
1}s_2^{-2} \sum_{s_1\ge
1}s_1^{-1}\lambda_{f}(s_1^2)V_t(s_1^2s_2^4)\nonumber
\end{eqnarray}
%
The diagonal term  is
\begin{eqnarray}
 \frac{\pi}{4}L(\frac12,f)|\Gamma (\frac k2)|^2\int_{-\infty}^{\infty}u\left(\frac{t}{T}\right)\sum_{s_2\ge
1}s_2^{-2} \sum_{s_1\ge
1}s_1^{-1}\lambda_{f}(s_1^2)V_t(s_1^2s_2^4)\tanh(\pi
t)tdt\nonumber
\end{eqnarray}
For the sum over $s_1$, we have $$\sum_{s_1\ge
1}s_1^{-1}\lambda_{f}(s_1^2)V_t(s_1^2s_2^4)=\frac{1}{2\pi
i}\int_{(2)}\sum_{s_1\ge
1}\frac{\lambda_{f}(s_1^2)}{s_1^{2s+1}}U_t(s)(\frac{s_2^4}{t^2})^{-s}\frac{ds}{s},$$
where
$$U_t(s)=(1+P_t(s))\frac{\Gamma\left(\frac{s+\frac{k+1}{2}}{2}\right)\Gamma\left(\frac{s+\frac{k-1}{2}}{2}\right)}{\Gamma\left(\frac k4+\frac 14\right)\Gamma\left(\frac k4-\frac14\right)},$$
and$$P_t(s)=\sum_{1\le r\le
N}\frac{p_{r+1}(s)}{t^r}+O(\frac{|s|^{N+2}}{t^{N+1}})$$ is an analytic
function in $\mathcal{R}s\ge -2$. $p_{r+1}(s)$ is a polynomial of
degree at most $r+1$. \\
Also, we have $$\sum_{s_1\ge
1}\frac{\lambda_{f}(s_1^2)}{s_1^{s}}=\frac{1}{\zeta(2s)}L(s,\mathrm{sym}^2 f).$$
Thus, moving the line of integration in the sum over $s_1$ to
$\mathcal{R}(s)=-1/4+\epsilon$, we get
$$\sum_{s_1\ge
1}s_1^{-1}\lambda_{f}(s_1^2)V_t(s_1^2s_2^4)=\frac{1}{\zeta(2)}L(1,\mathrm{sym}^2 f)+O(T^{-1/2+\epsilon}).$$
Therefore, we get the diagonal terms contribute
$$\frac{\pi}{4}TL(1,\mathrm{sym}^2 f)L(\frac12,f)|\Gamma (\frac k2)|^2+O(T^{1/2+\epsilon}).$$

Since $$V_t(y)=\frac{1}{2\pi i}\int_{(2)}U_t(s)\left(\frac{y}{t^2}\right)^{-s}\frac{ds}{s},$$
$V_t(y)$ can be written as 
$$V_t(y)=V\left(\frac{y}{t^2}\right)+\sum_{1\le r\le N}\frac{1}{t^r}V_r\left(\frac{y}{t^2}\right)+O(\frac{1}{t^{N+1}}).$$
Thus,  the non-diagonal terms are 
\begin{eqnarray}
&&\sum_{d\geq
1}\frac{\mu(d)}{d^{\frac32}}\sum_{s_1,s_2,t_1,t_2\geq
1}\lambda_{f}(ds_1^2t_1)(s_1^2t_1s_2^4t_2^2)^{-1/2}\sum_{c\geq
1}\frac{S(t_1^2,t_2^2;c)}{c}\nonumber\\
&&\int_{-\infty}^{\infty}J_{2it}(\frac{4\pi
t_1t_2}{c})V(\frac{d^3s_1^2t_1s_2^4t_2^2}{t^2})u\left(\frac{t}{T}\right)\frac{tdt}{\cosh(\pi
t)}\nonumber
\end{eqnarray}
  Let $x=\frac{4\pi t_1t_2}{c}$, the inner integral in the
non-diagonal terms
is$$\frac12\int_{-\infty}^{\infty}\frac{J_{2it}(x)-J_{-2it}(x)}{\sinh{\pi
t}}V(\frac{d^3s_1^2t_1s_2^4t_2^2}{t^2})u\left(\frac{t}{T}\right)\tanh(\pi
t)tdt.$$ Since $\tanh (\pi t)=$sgn$ (t)+O(e^{-\pi |t|})$ for large
$|t|$, we can remove $\tanh(\pi t)$ by getting a negligible term
$O(T^{-N})$ for any $N>0$. Applying the Parseval identity, the
Fourier transform in  \cite{Ba},
$$\left(\frac{\widehat{J_{2it}(x)-J_{-2it}(x)}}{\sinh(\pi
t)}\right)(y)=-i\cos(x\cosh(\pi y)).$$ and the evaluation of the
Fresnel integrals, the integral is
\begin{eqnarray}
&&\frac12\int_{-\infty}^{\infty}(\frac{J_{2it}(x)-J_{-2it}(x)}{\sinh{\pi
t}})^{\wedge}(y)(V(\frac{d^3s_1^2t_1s_2^4t_2^2}{t^2})u\left(\frac{t}{T}\right)t)^{\wedge}(y)dy\nonumber\\
&=&\frac{-i}{2}\int_{-\infty}^{\infty}(\cos(x\cosh(\pi y)))(V(\frac{d^3s_1^2t_1s_2^4t_2^2}{t^2})u\left(\frac{t}{T}\right)t)^{\wedge}(y)dy\nonumber\\
&=&\frac{-i}{2}\int_{-\infty}^{\infty}(\cos(x+\frac12\pi^2xy^2))(V(\frac{d^3s_1^2t_1s_2^4t_2^2}{t^2})u\left(\frac{t}{T}\right)t)^{\wedge}(y)dy\nonumber\\
&=&\frac{-i}{2}\int_{0}^{\infty}(\cos(x-y+\frac{\pi}{4}))(V(\frac{d^3s_1^2t_1s_2^4t_2^2}{t^2})u\left(\frac{t}{T}\right))(\sqrt{\frac{xy}{2}})\frac{dy}{\sqrt{\pi y}}\nonumber\\
&=&\frac{-i}{2}\int_{0}^{\infty}(\cos(x-y+\frac{\pi}{4}))V(\frac{2d^3s_1^2t_1s_2^4t_2^2}{xy})u\left(\frac{\sqrt{\frac{xy}{2}}}{T}\right)\frac{dy}{\sqrt{\pi y}}\nonumber\\
&=&\frac{-i}{2}\int_{0}^{\infty}(\cos(4\pi t_1t_2c^{-1}-y+\frac{\pi}{4}))V(\frac{2d^3s_1^2t_1s_2^4t_2^2}{4\pi t_1t_2c^{-1}y})u\left(\frac{\sqrt{\frac{4\pi t_1t_2c^{-1}y}{2}}}{T}\right)\frac{dy}{\sqrt{\pi
y}}\nonumber
\end{eqnarray}
Note: Here all the equation is up to an error of $O(xT^{-4})$. The higher Taylor coefficients of the $\cosh$ factor is negligible by studying the stationary phases as did in \cite{LY}.

 Thus, the
non-diagonal terms is concentrated on
$$T^{2-\epsilon}\ll t_1t_2c^{-1}y\ll T^2.$$
So, we can assume $d^3s_1^2t_1s_2^4t_2^2\ll T^{2+\epsilon}$ since
$V(\xi)$ has exponential decay as $\xi\rightarrow\infty.$ By
partial integration, the terms with $c\gg T^{\epsilon}$ and  also the terms 
$t_1t_2\ll T^{2-4\epsilon}$ contribute $O(1)$. So we can assume
$c\ll T^{\epsilon}$ and $t_1t_2\gg T^{2-4\epsilon}$, we also have $t_1t_2^2\ll T^{2+\epsilon}$ therefore we
have $t_2\ll T^{5\epsilon}$, also we have the sum over $s_1$ and
$s_2$ converges. Let $t=\frac{\sqrt{2\pi t_1t_2c^{-1}y}}{T}$, the
inner integral is
\begin{eqnarray}\frac{T\sqrt{2c}}{\pi\sqrt{ t_1t_2}}\int_0^{\infty}u(t)(\cos(4\pi t_1t_2c^{-1}-(tT)^2c/(2\pi
t_1t_2)+\frac{\pi}{4}))V(\frac{2d^3s_1^2t_1s_2^4t_2^2}{t^2T^2})dt\nonumber
\end{eqnarray} From Hecke's bound
$$\sum_{r\leq R}\lambda_{f}(r)e(\alpha r)r^{-1/2}\ll_{\epsilon}
R^{\epsilon},$$ where $\alpha\in \R$ and the Hecke
relation $$\lambda_{f}(r_1r_2)=\sum_{d|(r_1,
r_2)}\mu(d)\lambda_{f}(r_1/d)(r_2/d);$$ and partial summation,
we get the non-diagonal terms contribute $O(T^{5\epsilon})$.

To evaluate the continuous part, we need rewrite
\begin{eqnarray}
&&\sum_{d\geq
1}\frac{\mu(d)}{d^{\frac32}}\sum_{s_1,s_2,t_1,t_2\geq
1}\lambda_{f}(ds_1^2t_1)(s_1^2t_1s_2^4t_2^2)^{-1/2}\nonumber\\
&&\int_{0}^{\infty}V(\frac{d^3s_1^2t_1s_2^4t_2^2}{t^2})\frac{u\left(\frac{t}{T}\right)}{|\zeta(1+2it)|^2}d_{it}(t_1^2)d_{it}(t_2^2)dt\nonumber
\end{eqnarray}
with respect to $L$-function and we obtain the continuous part contributes
\begin{eqnarray}
&&\int_0^{\infty}u\left(\frac{t}{T}\right)\frac{1}{|\zeta(1+2it)|^2}|L(\frac12+2it,f)|^2\nonumber\\
&&\frac{|\Gamma(\frac14-\frac{k}{2}-it)\Gamma(\frac14+\frac{k}{2}-it)|^2}{|\Gamma(\frac12+it)|^4}dt\nonumber
\end{eqnarray}
By Stirling formula and the Jutila's bound the subconvex bound \cite{jut},
$$L(\tfrac{1}{2}+it,f_j)\ll (\kappa_j+t)^{1/3+\epsilon},$$ we
obtain the continuous part contributes $O(T^{\frac23+\epsilon}).$
\newline

So we conclude that
\begin{eqnarray}
&&\sum_{j\geq
1}u\left(\frac{t_j}{T}\right)L(1,\mathrm{sym}^2\phi_j)|<Op(f)\varphi_j,\varphi_j>|^2\nonumber\\
&=&\frac{1}{2^k\pi^{k+1}}TL(1,\mathrm{sym}^2f)L(\frac12,f)|\Gamma
(\frac k2)|^2+O(T^{2/3+\epsilon}).
\end{eqnarray}
Since we normalize $f$, such that $<f,f>=1$ and from the fact $$|a_f(1)|^{-2}=2^{1-2k}\pi^{-k-1}\Gamma(k)L(1,\mathrm{sym}^2f),$$
we obtain the eigenvalue of $B$ at $f$ is
$$L(\frac12,f)\frac{2^{k-1}|\Gamma(\frac k2)|^2}{\Gamma(k)}.$$

Therefore, we complete the proof of the Proposition 4.
\end{proof}
Moreover from \cite{Z},  we have the following weighted quantum variance for Maass forms,
\vspace{.6mm}
\begin{proposition}
Let $\phi(z)$ be an even Maass-Hecke cuspidal
eigenform for  $\Gamma$, with the Laplacian
eigenvalue $\lambda_{\phi}=\frac14+t_{\phi}^2$, 
we have
\begin{eqnarray}
&&\lim_{T\rightarrow\infty}\frac1T\sum_{j\ge 1}u\left(\frac{t_j}{T}\right)L (1,\mathrm{sym}^2\phi)|<Op(\phi)\varphi_j,\varphi_j>|^2\nonumber\\
&=&L(\frac12,\phi)\frac{|\Gamma(\frac14-\frac{it_{\phi}}{2})|^4}{2\pi|\Gamma(\frac12-it_{\psi})|^2}.\nonumber
\end{eqnarray}

\end{proposition}
Note: In \cite{Z}, although the averaging there is against a specific weight function, it can be  removed using the same technique as we remove the weight $u(t)$ in next section.
 
Next, we will remove the weights in Proposition 4 and Proposition 5.

\section{ Removing the Weights}

We turn to removing the arithmetic weight $L (1,\mathrm{sym}^2\varphi_j)$ in our main Theorem 1.  We focus here on calculating the modified diagonal terms since the modified off-diagonal terms have the analogous estimates. 

We have 
\begin{eqnarray}
L (s,\mathrm{sym}^2\varphi_j)=\prod_p(1-\alpha_j^2(p)p^{-s})^{-1}(1-\beta_j^2(p)p^{-s})^{-1}(1-p^{-s})^{-1}\nonumber
\end{eqnarray}
where 
\begin{eqnarray}
\lambda_j(p)=\alpha_j(p)+\beta_j(p), \textrm{ and } \alpha_j(p)\beta_j(p)=1\nonumber
\end{eqnarray}
Hence from the Hecke relations $$\lambda_j^2(p)=\lambda_j(p^2)+1,$$
we have 
\begin{eqnarray}
\frac{1}{L (s,\mathrm{sym}^2\varphi_j)}&=&\prod_p(1-\lambda_j(p^2)p^{-s}+\lambda_j(p^2)p^{-2s}-p^{-3s})\nonumber\\
&:=& \sum_{n=1}^{\infty}\mu_{\mathrm{sym}^2\varphi_j}(n)n^{-s}\nonumber
\end{eqnarray}
where 
$$\mu_{\mathrm{sym}^2\varphi_j}(n)=\sum_{\substack{ab^2c^3=n\\(a,b)=(b,c)=(a,c)=1}}\mu(a)\lambda_j(a^2)\mu^2(b)\lambda_j(b^2)\mu(c).$$
Note that $$\mu_{\mathrm{sym}^2\varphi_j}(n)\ll_{\epsilon}n^{\epsilon}(|\lambda_j(n)|^4+1).$$
Hence  it follows that for $t_j\le R$ and $\xi\ge 1$,   \cite{GRS} and \cite{Iw90}, 
\begin{eqnarray}\sum_{n\le\xi}|\mu_{\mathrm{sym}^2\varphi_j}(n)|\ll\xi R^{\epsilon}.\end{eqnarray}
In particular, 
\begin{eqnarray}\frac{1}{L (s,\mathrm{sym}^2\varphi_j)}\ll_{\epsilon}R^{\epsilon},  \textrm{ for }  \mathrm{Re}(s)\ge\sigma_0>1.\end{eqnarray}
Recall that according to [Iw] and [H-L], we have that 
\begin{eqnarray}R^{-\epsilon}\ll_{\epsilon}L (1,\mathrm{sym}^2\varphi_j)\ll_{\epsilon}R^{\epsilon}.\end{eqnarray}
Also from the definition  we have for $f$ fixed \begin{eqnarray}|V_j|^2=|<Op(f)\varphi_j,\varphi_j>|^2\ll 1.\end{eqnarray}

\textbf{Lemma 1}: Given a small $\epsilon_0>0$, there is $\delta_0=\delta_0(\epsilon_0)$, such that for $X=R^{\epsilon}$,
\begin{eqnarray}
&&\sum_{t_j\le R}|\sum_{n=1}^{\infty}\frac{\mu_{\mathrm{sym}^2\varphi_j}(n)}{n}e^{-\frac{n}{X}}-L^{-1}(1, \mathrm{sym}^2\phi_j)|L(1, \mathrm{sym}^2\phi_j)|V_j|^2\nonumber\\
&& \quad \quad \ll_{\epsilon}X^{-\delta_0}R^{1+\epsilon}
\end{eqnarray}

We prove this by dividing  $\varphi_j$'s with $t_j\le R$ into two sets; $G$ those for which $L^{-1}(1, \mathrm{sym}^2\phi_j)$ has no zeros near $\mathrm{Re}(s)=1$ and the rest which we  
denote by $B$. According to the general density theorem [K-M], we can bound $|B|$ as follows.

For $\frac34<\alpha<1$ and $T\ge 1$, let $$N(\varphi_j;\alpha, T)=|\{\rho: L(\rho, \mathrm{sym}^2\phi_j)=0, |\mathrm{Im}(\rho)|\le T, \mathrm{Re}(\rho)\ge \alpha\}|.$$

Theorem 2 in \cite{KM1}  applies to this situation (as in their remark 4, one only needs $\theta<\frac 14$, and this holds since $\theta\le \frac{7}{32}$ according to [K-S], and the proof of [K-M] can be modified directly to 
Maass forms in place of holomorphic ones) and yields:

There are $C_0<\infty$ and $B_0<\infty$ such that,
\begin{eqnarray}\sum_{t_j\le R }N (\varphi_j;\alpha,T)\ll T^{B_0}R^{c_0\frac{1-\alpha}{2\alpha-1}}.\end{eqnarray}
To complete the proof of Lemma 1, we need: 

\textbf{Lemma 2}:  Given $\delta_1>0$ (small), and  $L(s, \mathrm{sym}^2\phi_j)$, ($t_j\le R$) which  has no zeros in  $\mathrm{Re}(s)>1-2\delta_1$ and 
$\mathrm{Im}(s)\le(\log R)^2$, then for $1\le X\le R$,
\begin{eqnarray}
\sum_{n=1}^{\infty}\frac{\mu_{\mathrm{sym}^2\varphi_j}(n)}{n}e^{-\frac{n}{X}}-L^{-1}(1, \mathrm{sym}^2\phi_j)
\ll_{\epsilon} X^{-\delta_1}R^{\epsilon}
\end{eqnarray}

Proof of Lemma 2: We have, 
\begin{eqnarray}
&&\frac{1}{2\pi i}\int_{\mathrm{Re}(s)=2}\Gamma(s)X^sL^{-1}(s+1, \mathrm{sym}^2\phi_j)ds\nonumber\\
&=&\sum_{n=1}^{\infty}\frac{\mu_{\mathrm{sym}^2\varphi_j}(n)}{n}e^{-\frac{n}{X}}
\end{eqnarray}

Now shift the contour integral replacing $\mathrm{Re}(s)=2$ by 
$$\gamma=\gamma_0\cup\gamma_1\cup\gamma_2\cup\gamma_3\cup\gamma_4$$ where 
$\gamma_0$ is the path from $2-i\infty$ to $2-i(\log R)^2$; $\gamma_1$ is a smooth path from $2-i(\log R)^2$ to $-\delta_1-i(\log R)^2/2$; $\gamma_2$ is the path from $-\delta_1-i(\log R)^2/2$ to $-\delta_1+i(\log R)^2/2$; $\gamma_3$ is a smooth path from $-\delta_1+i(\log R)^2/2$ to $2+i(\log R)^2$; $\gamma_4$ is the path from $2+i(\log R)^2$ $2+i\infty$.

We pick up a term from the pole at $s=0$, 
\begin{eqnarray}
&&L^{-1}(1, \mathrm{sym}^2\phi_j)\nonumber\\
&=&\sum_{n=1}^{\infty}\frac{\mu_{\mathrm{sym}^2\varphi_j}(n)}{n}e^{-\frac{n}{X}}+\frac{1}{2\pi i}\int_{\gamma}\Gamma(s)X^sL^{-1}(s+1, \mathrm{sym}^2\phi_j)ds\nonumber
\end{eqnarray}
Now apply the Borel-Caratheodory Theorem as in [T] and our assumptions about  the zeros of $L(s, \mathrm{sym}^2\phi_j)$ to conclude that 
$$\frac{1}{L(s, \mathrm{sym}^2\phi_j)}\ll_{\epsilon} R^{\epsilon} \quad \textrm{along }  \gamma.$$
Hence the integral  $$\int_{\gamma_2}\Gamma(s)X^sL^{-1}(s+1, \mathrm{sym}^2\phi_j)ds\ll R^{\epsilon}X^{-\delta_1}.$$ 
The integrals over the other $\gamma$'s are very small thanks to the $\Gamma$-factor in the integrand and Stirling formula. This proves Lemma 2.

To complete the proof of Lemma 1, let $T=(\log R)^2$ and $\alpha=\alpha(\epsilon_0)$ by sufficiently close to 1, so that (50) yields 
\begin{eqnarray}\sum_{t_j\le R }N (\varphi_j;\alpha,(\log R)^2)\ll R^{\eta_0},\end{eqnarray}
with  $\eta_0<1-\epsilon_0$.

Now let $G$ be the set of those $\varphi_j$'s such that $N (\varphi_j;\alpha,(\log R)^2)=0$ and $B$ the rest. According to (53), $$|B|\ll R^{\eta_0}.$$
Hence from (45), (47) and (48),
\begin{eqnarray}
&&\sum_{t_j\in B}|\sum_{n=1}^{\infty}\frac{\mu_{\mathrm{sym}^2\varphi_j}(n)}{n}e^{-\frac{n}{X}}-L^{-1}(1, \mathrm{sym}^2\phi_j)|L(1, \mathrm{sym}^2\phi_j)|V_j|^2\nonumber\\
&& \quad \quad\quad\ll_{\epsilon} R^{\epsilon}|B|\ll R^{\epsilon}R^{\eta_0}\ll R^{1+\epsilon}X^{-1}. 
\end{eqnarray}
with $X=R^{\epsilon_0}.$

For $t_j\in G$, we have from Lemma 2 that, with $\delta_1=1-\alpha/2,$
\begin{eqnarray}
&&\sum_{t_j\in G}|\sum_{n=1}^{\infty}\frac{\mu_{\mathrm{sym}^2\varphi_j}(n)}{n}e^{-\frac{n}{X}}-L^{-1}(1, \mathrm{sym}^2\phi_j)|L(1, \mathrm{sym}^2\phi_j)|V_j|^2\nonumber\\
&&\quad \quad\ll_{\epsilon} R^{\epsilon}X^{-\delta_1}\sum_{t_j\le R}|V_j|^2L(1, \mathrm{sym}^2\phi_j)\ll R^{1+\epsilon}X^{-\delta_{1}}. 
\end{eqnarray}
On using the weighted version of the main Theorem, namely Proposition's 4 and 5. With (54) and (55), the proof of Lemma 1 is complete. 

Finally, we are ready to remove the weight. From Lemma 1, we have that for $X=R^{\epsilon_0}$,
\begin{eqnarray}
\sum_{t_j\le R}|V_j|^2&=&\sum_{t_j\le R}\sum_{n=1}^{\infty}\frac{\mu_{\mathrm{sym}^2\varphi_j}(n)}{n}e^{-\frac{n}{X}}L(1, \mathrm{sym}^2\phi_j)|V_j|^2\nonumber\\
&&+ O(R^{1+\epsilon}X^{-\delta_{0}}) \nonumber\\
&=& \sum_{n=1}^{\infty} \frac{e^{-\frac{n}{X}}}{n}\sum_{\substack{ab^2c^3=n\\(a,b)=(b,c)=(a,c)=1}}\mu(a)\mu^2(b)\mu(c)\sum_{t_j\le R}\lambda_j(a^2)\lambda_j(b^2)\nonumber\\
&& L(1, \mathrm{sym}^2\phi_j)|V_j|^2+O_{\epsilon}(R^{1+\epsilon}X^{-\delta_{0}}).
\end{eqnarray}

For the inner sum on $t_j$, we consider the following sum for square free $m$,

\begin{eqnarray}
\sum_{t_j\le R}\lambda_j(m^2)L(1, \mathrm{sym}^2\phi_j)|V_j|^2\sim \beta(m) R\nonumber
\end{eqnarray}

By a similar calculation as in Section 4 (p.34-35), we obtain 
 
 \begin{eqnarray}
 \beta(m)&=&\zeta(2)\sum_{a_1b_1c_1=m}\frac{1}{a_1b_1^{3/2}c_1^{1/2}}\sum_{s\geq 1}\frac{\lambda_f(b_1c_1s^2)}{s}\nonumber
\end{eqnarray}

We first evaluate, for $m$ square free,

\begin{eqnarray}
\nu_f(m):&=&\sum_{s\geq1}\frac{\lambda_f(ms^2)}{s}\nonumber\\
& =&\prod_{p\nmid m}B_p(p^{-1})\prod_{p\mid m}\frac{\lambda_f(p)B_p(p^{-1})}{1+p^{-1}}\nonumber
\end{eqnarray}

where $$B_p(x)=\sum_{n\geq 0}\lambda_f(p^{2n})x^n.$$

Thus, the constant after removal of the harmonic weights is 
 \begin{eqnarray}C(f)&=&\zeta(2)\sum_{n\geq 1}\frac 1n\sum_{\substack{ab^2c^3=n\\
(a,b)=(b,c)=(a,c)=1}}\mu(a)\mu^2(b)\mu(c)\sum_{\substack{a_1b_1c_1=ab\\
s\geq 1}}\frac{\lambda_f(b_1c_1s^2)}{a_1b_1^{3/2}c_1^{1/2}s}\nonumber\\
&=&\frac{1}{\zeta(2)}\prod_p\left(1-\frac{\lambda_{f}(p)}{p^{3/2}(1+p^{-1})}\right)\nonumber
 \end{eqnarray}

Hence 
\begin{eqnarray}
\sum_{t_j\le R}|V_j|^2&\sim& C(f)\sum_{t_j\le R}L(1, \mathrm{sym}^2\phi_j)|V_j|^2\nonumber
\end{eqnarray}



Thus, we obtain,
\begin{proposition}
For weight $k$ holomorphic Hecke eigenform $f$,
\begin{eqnarray}
\lim_{R\rightarrow\infty}\frac{1}{R}\sum_{t_j\le R}|<Op(f)\varphi_j,\varphi_j>|^2=C(f)L(\frac12,f)\frac{2^{k-1}|\Gamma(\frac k2)|^2}{\Gamma(k)}.\nonumber
\end{eqnarray}
\end{proposition}

\vspace{.6mm}
\begin{proposition}
Let $\phi(z)$ be an even Maass-Hecke cuspidal
eigenform for  $\Gamma$, with the Laplacian
eigenvalue $\lambda_{\phi}=\frac14+t_{\phi}^2$, we have
\begin{eqnarray}
&&\lim_{R\rightarrow\infty}\frac1R\sum_{t_j\leq R}|<Op(\phi)\varphi_j,\varphi_j>|^2= C(\phi)L(\frac12,\phi)\frac{|\Gamma(\frac14-\frac{it_{\phi}}{2})|^4}{2\pi|\Gamma(\frac12-it_{\psi})|^2}.\nonumber
\end{eqnarray}
\end{proposition}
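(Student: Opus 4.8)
The plan is to repeat, essentially verbatim, the weight--removal argument carried out in this section for the holomorphic observable $f$, replacing $f$ throughout by the fixed even Maass--Hecke cusp form $\phi$ and using the weighted asymptotic (40) as the input in place of its holomorphic counterpart. First I would take $u$ in the weighted statement to be an approximation to the characteristic function of $[0,1]$, so that the starting point is
\[
\frac1T\sum_{t_j\le T} L(1,\mathrm{sym}^2\varphi_j)\,\bigl|\langle Op(\phi)\varphi_j,\varphi_j\rangle\bigr|^2 \;\longrightarrow\; L(\tfrac12,\phi)\,\frac{|\Gamma(\tfrac14-\tfrac{it_\phi}{2})|^4}{2\pi\,|\Gamma(\tfrac12-it_\phi)|^2}.
\]
Then I would write the unweighted variance sum $\sum_{t_j\le T}|\langle Op(\phi)\varphi_j,\varphi_j\rangle|^2$ as the harmonic average $\sum_{t_j\le T}^{h}$ of $L(1,\mathrm{sym}^2\varphi_j)\,|\langle Op(\phi)\varphi_j,\varphi_j\rangle|^2$, and replace the weight $L(1,\mathrm{sym}^2\varphi_j)$ by the truncated Dirichlet polynomial $\sum_{n\le x}\rho_j(n)n^{-1}$ of length $x=T^{\delta}$, using the approximate functional equation for $L(s,\mathrm{sym}^2\varphi_j)$ and splitting off the tail $x<n\le y$.

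The contribution of the tail would then be bounded exactly as in the holomorphic case: Hölder's inequality separates $\sum_{x<n\le y}\rho_j(n)n^{-1}$ from the arithmetic weight $|\langle Op(\phi)\varphi_j,\varphi_j\rangle|^2$, and the $2r$-th power moment $\sum_{t_j\le T}(\sum_{x<n\le y}\rho_j(n)n^{-1})^{2r}$ is estimated, via Lemmas~2--5, by the mean value $\sum_{t_j\le T}|\sum_{m\sim M}c(m)\lambda_j(m^2)/m|^2$ with $M\ge T^9$, which Lemma~6 bounds by $(\log M)^D$; Deligne's bound $|\rho_j(n)|\le\tau(n)^2$ and Hecke's bound keep the remaining sums under control. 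Since every one of these estimates involves only the Hecke eigenvalues $\lambda_j(n)$ of the spectral family and is completely uniform in the observable, it applies without change, and one obtains
\[
\sum_{t_j\le T}\bigl|\langle Op(\phi)\varphi_j,\varphi_j\rangle\bigr|^2 \;=\; \sum_{t_j\le T} L(1,\mathrm{sym}^2\varphi_j)\,\bigl|\langle Op(\phi)\varphi_j,\varphi_j\rangle\bigr|^2 \;+\; O(T^{1-\alpha})
\]
for some absolute $\alpha>0$, which after dividing by $T$ and letting $T\to\infty$ gives the stated formula with the same archimedean constant as in (40).

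I do not expect a genuine obstacle here, and indeed the paper records this step only as ``similarly''. The one thing that does need to be checked, rather than asserted, is that $|\langle Op(\phi)\varphi_j,\varphi_j\rangle|^2$ satisfies the same polynomial bound in $t_j$ and admits the same kind of spectral expansion --- in terms of $L(\tfrac12,\phi\otimes\mathrm{sym}^2\varphi_j)$, arising from the factorization $L(s,\phi\otimes\varphi_j\otimes\varphi_j)=L(s,\phi\otimes\mathrm{sym}^2\varphi_j)\,L(s,\phi)$ --- that was used to run the mollifier argument for holomorphic $f$; this is precisely what the triple--product and Rankin--Selberg analysis of \cite{Z} (parallel to Section~4) supplies for the Maass observable $\phi$. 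With that in hand, the transcription of Lemmas~1--6 and of the preceding weight--removal proposition is routine, and the only genuinely delicate ingredient, the almost--orthogonality mean value of Lemma~6, is observable--independent and already established.
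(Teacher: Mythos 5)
Your proposal is correct and follows exactly the route the paper intends: the paper disposes of this proposition with the single word ``similarly,'' relying on the fact that the mollifier/weight-removal machinery of Lemmas 1--6 and the preceding proposition depends only on the spectral family $\{\lambda_j(n)\}$ and not on the observable, with the weighted asymptotic (40) (imported from \cite{Z}) serving as the input in place of its holomorphic counterpart. Your added remark that one should verify the observable enters only through a quantity with the analogous $L(\frac12,\phi\otimes\mathrm{sym}^2\varphi_j)$ expansion is exactly the point the paper leaves implicit, and it is supplied by \cite{Z}.
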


\vspace{.4cm}

\appendix

\section{A triple product calculation for $\GL_2(\R)$
\\
by Michael Woodbury}

Let $F$ be a number field and $\A=\A_F$ the ring of adeles.  Let $T$ be the subgroup of $\GL_2$ consisting of diagonal matrices with $Z\subseteq T$ the center.  Let $N\subseteq \GL_2$ be the subgroup of upper triangle unipotent matrices so that $P=TN$ is the standard Borel.

Given automorphic representations $\pi_1,\pi_2,\pi_3$ of $\GL_2$ over $F$ such that the product of the central characters is trivial, one can consider the so-called triple product $L$-function $L(s,\Pi)$ attached to $\Pi=\pi_1\otimes\pi_2\otimes\pi_3$, or the completed $L$-function $\Lambda(s,\Pi)$.  This $L$-function is closely related to periods of the form
 $$ I(f) = \int_{[\GL_2]} f_1(g)f_2(g)f(g)dg  $$
where $f=f_1\otimes f_2\otimes f_3$ with $f_i\in \pi_i$, and $[\GL_2]=\A^\times\GL_2(F) \bs GL_2(\A)$.

One example of this relationship arises in the case that $\pi_1$ and $\pi_2$ are cupsidal and $\pi_3$ is an Eisenstein series.  Then $L(s,\Pi)$ is the Rankin-Selberg $L$-function $L(s,\pi_1\times \pi_2)$, and for appropriately chosen $f_3$, the period $I$ gives an integral representation.  Another example occurs when all three representations are cuspidal.  In this case, formulas for $L(s,\Pi)$ have been given by Garrett\cite{Ga}, Gross-Kudla\cite{GK}, Harris-Kudla\cite{HK}, Watson\cite{Wat1} and Ichino\cite{ichino}.

Let us write $\pi_i =\otimes_v \pi_{i,v}$ as a (restricted) tensor product over the places $v$ of $F$, with each $\pi_{i,v}$ an admissible representation of $\GL_2(F_v)$.  Let $\langle \cdot, \cdot \rangle_v$ be a (Hermitian) form on $\pi_i$.  Then, assuming that $f_i = \otimes f_{i,v}$ is factorizable\footnote{As a restricted tensor product, we have chosen vectors $f_{i,v}^0\in \pi_v$ for all but finitely many places $v$.  We require that the local inner forms must satisfy $\langle f_{i,v}^0,f_{i,v}^0\rangle_v=1$ for all such $v$.}, for each $v$ we can consider the matrix coefficient
 $$ I'(f_v) = \int_{\PGL_2(F_v)} \langle \pi_v(g_v)f_{1,v},f_{1,v}\rangle_v
\langle \pi_v(g_v)f_{2,v},f_{2,v}\rangle_v
\langle \pi_v(g_v)f_{3,v},f_{3,v}\rangle_v dg_v, $$ 
and the normalized matrix coefficient
\begin{equation}\label{eq:ItoIprime}
 I_v(f_v) = \zeta_{F_v}(2)^{-2}\frac{L_v(1,\Pi_v,\Ad)}{L_v(1/2,\Pi_v)}I'_v(f_v).
\end{equation}

When each of the representations $\pi_i$ is cuspidal, Ichino proved in \cite{ichino} that there is a constant $C$ such that
\begin{equation}\label{eq:ichino}
 \frac{\abs{I(f)}^2}{\prod_{j=1}^3 \int_{[\GL_2]}\abs{f_j(g)}^2dg} = \frac{C}{2^3}\cdot \zeta_F(2)^2\cdot \frac{\Lambda(1/2,\Pi)}{\Lambda(1,\Pi,\Ad)} \prod_v \frac{I_v(f_v)}{\langle f_v,f_v\rangle_v}
\end{equation}
whenever the denominators are nonzero.  By the choice of normalizations, the product on the right hand side of \eqref{eq:ichino} is in fact a finite product over some number of ``bad'' places.

While Ichino's formula is extremely general, for number theoretic applications it is often important to understand well the bad factors.  For example, subconvexity for the triple product $L$-function as proved by Bernstein-Reznikov in \cite{BR} and Venkatesh in \cite{ven} used, in the former case, Watson's formula from \cite{Wat1} or, in the latter, the result of \cite{W}.

We would like to make \eqref{eq:ichino} more explicit.  First, we remark that the constant $C$ depends only on the choice of measures.  Letting
 $$ K_v = \left\{ \begin{array}{cc} \GL_2(\Z_p) & \mbox{ if $v=p$ is prime}, \\
          \SO(2) & \mbox{ if }v=\infty,\end{array}\right. $$
we choose the local measures $dg_v$ such that the volume of $K_v$ is 1 in all cases, and we choose the global measure on $[\GL_2]$ to be the Tamagawa measure.  With this choice, setting $\Delta_F$ to be the discriminant of $F/\mathbb Q$, we have $C=\frac{1}{\abs{\Delta_F}^{3/2}\zeta_F(2)}$.

Next, we want to replace the \emph{adelic} integrals appearing in \eqref{eq:ichino} with a \emph{classical} version.  It is well-known that if $\varphi_j$ are (classical) modular or Maass forms, then they correspond to automorphic representations $\pi_j$ and $f_j\in \pi_j$.  Although, the correspondence from $\varphi_j$ to $f_j$ is only unique up to a nonzero constant, the choice of constant is irrelevant since \eqref{eq:ichino} is self-normalizing.

If we assume that for each $j=1,2$, $\varphi_j$ is a cuspidal modular or Maass form for the full modular group $\Gamma=\SL_2(\mathbb Z)$, the corresponding $f_j = \otimes f_{j,v}\in \pi_j$ satisfies $f_{j,p}=f_{j,p}^\circ$ for all finite primes, and the difference between integrating over $[\GL_2]$ in the adelic version, and integrating over $X=\SL_2(\Z)\bs \mathbb{H}$ in the classical setting, is the difference between $\vol([\GL_2])=2$ and $\vol(X)$.  So, taking $dA(z)$ to be the probability measure on $X$, we find that \eqref{eq:ichino} yields
\begin{equation}\label{eq:ichinoclassic}
  \frac{\abs{\int_X \varphi_1(z)\varphi_2(z)\varphi_3(z) dA(z)}^2}{\prod_{j=1}^3 \int_{X}\abs{\varphi_j(z)}^2dA(z)} = \frac{1}{2^4}\cdot \frac{\Lambda(1/2,\Pi)}{\Lambda(1,\Pi,\Ad)} \frac{I_\infty(f_\infty)}{\langle f_\infty,f_\infty\rangle_\infty}.
\end{equation}

At the infinite place, $\pi_\infty$ is either a discrete series representation $\DS{k}$ of some weight $k\geq 2$, a limit of discrete series, or it is a principal series $\pi_{it}$ where $\pi_{it} = \Ind_P^G(\abs{\cdot }^{it}\otimes \abs{\cdot}^{-it})$ is obtained as the normalized induction of the character
 $$ \abs{\cdot }^{it}\otimes \abs{\cdot}^{-it}: T(\R)\to \C. $$

Recall that if $f_\infty \in \pi_{it}$ then
 $$ f_\infty(\smatr{u}{0}{0}{u} \smatr{1}{x}{0}{1}\smatr{y}{0}{0}{1} g ) = \abs{y}^{\frac{1}{2}+it}f(g) $$
for all $u,y\in \R^\times$, $x\in \R$ and all $g\in \GL_2(\R)$.  If $\pi=\otimes \pi_v$ corresponds to a Maass form of eigenvalue $\lambda$ under the Laplacian, then $\pi_{\infty}\simeq \pi_{it}$ where $\lambda=\frac{1}{4}+t^2$.  The unitary structure given to $\pi_{it}$ is normalized so as to be given by integration against an invariant probability measure in the circle model.

We now assume that $v\mid \infty$ is a real place.  In this appendix we calculate $I_v$ in the case that $\pi_{1,v}=\DS{k}$ is the discrete series representation of (even) weight $k$, and $\pi_{2,v}=\pi_{it_2}$ and $\pi_{3,v}=\pi_{it_3}$ are principal series representations.

Let
 $$ \SO(2) = \left.\left\{ \kappa_\theta = \matr{\cos{\theta}}{\sin{\theta}}{-\sin{\theta}}{\cos{\theta}} \right\rvert \theta \in \R \right\}. $$
Recall that a function $f_{j,v}\in \pi_{j,v}$ is said to have \emph{weight} $m$ if $f_{j,v}(g\kappa_\theta)=f_{j,v}(g)e^{im\theta}$ for all $g\in \GL_2(\R)$.  As is well known, for each $m\in \Z$, the subspace of $\pi_{j,v}$ consisting of functions of weight $m$ is at most 1-dimensional.

\begin{theorem}\label{appthm}
Let $f_{1,v}\in \DS{k}$ be the vector of weight $k$, let $f_{2,v}\in \pi_{it_2}$ be the vector of weight zero, and let $f_{3,v}\in \pi_{it_3}$ be the vector of weight $-k$ (each normalized\footnote{This normalization ensures that $\langle f_{j,v},f_{j,v}\rangle_v =1$.} so that $f_{j,v}(\smatr{1}{0}{0}{1})=1$.)  Then
\begin{multline}
 I'_v(f_{1,v}\otimes f_{2,v}\otimes f_{3,v}) = \frac{4\pi}{(k-1)!(\frac{1}{2}+it_3)_{\frac{k}{2}}(\frac{1}{2}-it_3)_{\frac{k}{2}}} \\ \times \frac{\Gamma(\frac{k}{2}+it_2+it_3)\Gamma(\frac{k}{2}+it_2-it_3)\Gamma(\frac{k}{2}-it_2-it_3)\Gamma(\frac{k}{2}-it_2+it_3)}{\Gamma(\frac{1}{2}+it_2)\Gamma(\frac{1}{2}-it_2)\Gamma(\frac{1}{2}+it_3)\Gamma(\frac{1}{2}-it_3)}
\end{multline}
and
\begin{equation}\label{eq:appthm}
 I_v(f_{1,v}\otimes f_{2,v}\otimes f_{3,v}) = \frac{2^{k-1}\pi^k}{(\frac{1}{2}+it_3)_{\frac{k}{2}}(\frac{1}{2}-it_3)_{\frac{k}{2}}}.
\end{equation}
where $(z)_m = z(z+1)\cdots(z+m-1)$.
\end{theorem}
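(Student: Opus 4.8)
The plan is to compute $I'_v(f_1\otimes f_2\otimes f_3)$ directly, as a one–dimensional integral over the split torus, and then read off $I_v$ from \eqref{eq:ItoIprime}; this is in the spirit of the real local calculations in \cite{W}. Write $\Phi_i(g)=\langle\pi_{i,v}(g)f_i,f_i\rangle_v$ for the matrix coefficients, so that $\Phi_i(\smatr{1}{0}{0}{1})=\langle f_i,f_i\rangle_v=1$ under the given normalization. Since $f_1,f_2,f_3$ have weights $k,0,-k$, one has $\Phi_i(\kappa_\alpha g\kappa_\beta)=e^{im_i(\alpha+\beta)}\Phi_i(g)$ with $(m_1,m_2,m_3)=(k,0,-k)$, so the product $\Phi_1\Phi_2\Phi_3$ is bi-$K$-invariant. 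Using the Cartan decomposition $\PGL_2(\R)=KA^+K$, with $A^+=\{a_v=\smatr{v}{0}{0}{1/v}:v\ge 1\}$, and the attendant polar form of Haar measure, $I'_v$ becomes a fixed positive multiple of the integral of $\Phi_1\Phi_2\Phi_3$ over $A^+$ against the polar Jacobian. Everything thus reduces to identifying the three radial matrix coefficients $\Phi_i(a_v)$ and evaluating the resulting integral.

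Realizing $\DS{k}$ as the weight-$k$ holomorphic discrete series on the disc — lowest weight vector the constant function, paired against $(1-\abs{w}^2)^{k-2}\,dA$ — a short computation (the off-diagonal Taylor coefficients integrate to zero by rotation invariance) gives that $\Phi_1(a_v)$ is the \emph{elementary} function $\bigl(\tfrac{2}{v+v^{-1}}\bigr)^k$, with no hypergeometric factor; this is exactly the feature that makes the final integral tractable. For $f_2$ (weight $0$ in $\pi_{it_2}$) and $f_3$ (weight $-k$ in $\pi_{it_3}$) I would work in the induced model and use $\Phi_i(a_v)=\int_K f_i(\kappa a_v)\overline{f_i(\kappa)}\,d\kappa$: then $\Phi_2$ is the Harish--Chandra spherical function $\varphi_{it_2}$, a Gauss hypergeometric ${}_2F_1$ of the hyperbolic distance, and $\Phi_3$ is the generalized spherical function attached to the weight-$(-k)$ vector, a power of $v\pm v^{-1}$ times a ${}_2F_1$ with parameters shifted by $k/2$, normalized to equal $1$ at $v=1$. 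Substituting $u=\bigl(\tfrac{2}{v+v^{-1}}\bigr)^2$ (so $1-u=\bigl(\tfrac{v-v^{-1}}{v+v^{-1}}\bigr)^2$, $\Phi_1(a_v)=u^{k/2}$, and the polar Jacobian becomes a constant times $u^{-2}\,du$) turns the radial integral into one of the shape $\int_0^1 u^{A}(1-u)^{B}\,{}_2F_1(\alpha,\beta;\gamma;u)\,du$ for explicit $A,B,\alpha,\beta,\gamma$ in $k,t_2,t_3$. The factor $u^{k/2}\sim e^{-k\abs{\log v}}$ makes the integrand decay rapidly along $A^+$, so the integral converges absolutely for every even $k\ge 2$ and no analytic continuation in $t_2,t_3$ is needed.

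To evaluate the last integral I would insert Euler's integral representation for ${}_2F_1$ (formula 9.111 of \cite{gr}, already used in \S2 and \S3), interchange the two integrations, recognize the inner integral as a Beta integral, and identify the outcome as a ${}_3F_2$ at $1$; for the parameters produced by the weight pattern $(k,0,-k)$ this ${}_3F_2$ lies in a classically summable family and collapses to a ratio of $\Gamma$-functions. Keeping track of the measure normalization and the Jacobian then gives the asserted formula for $I'_v(f_1\otimes f_2\otimes f_3)$, namely the product $\Gamma(\tfrac k2+it_2+it_3)\Gamma(\tfrac k2+it_2-it_3)\Gamma(\tfrac k2-it_2-it_3)\Gamma(\tfrac k2-it_2+it_3)$ divided by $\Gamma(\tfrac12+it_2)\Gamma(\tfrac12-it_2)\Gamma(\tfrac12+it_3)\Gamma(\tfrac12-it_3)$, times $\tfrac{4\pi}{(k-1)!\,(\frac12+it_3)_{k/2}(\frac12-it_3)_{k/2}}$. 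Finally \eqref{eq:ItoIprime} gives $I_v=\zeta_{F_v}(2)^{-2}\,L_v(1,\Pi_v,\Ad)\,L_v(1/2,\Pi_v)^{-1}\,I'_v$; for $\Pi_v=\DS{k}\otimes\pi_{it_2}\otimes\pi_{it_3}$ both $L_v(1/2,\Pi_v)$ and $L_v(1,\Pi_v,\Ad)=\prod_i L_v(1,\pi_{i,v},\Ad)$ are explicit products of $\Gamma_{\R}$-factors, and dividing these in and simplifying by repeated use of the Legendre duplication formula $\Gamma(z)\Gamma(z+\tfrac12)=2^{1-2z}\sqrt{\pi}\,\Gamma(2z)$ cancels all dependence on $t_2$ and leaves $\tfrac{2^{k-1}\pi^k}{(\frac12+it_3)_{k/2}(\frac12-it_3)_{k/2}}$, which is \eqref{eq:appthm}.

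The main obstacle is the hypergeometric integral of the third paragraph: singling out the correct ${}_3F_2(1)$, verifying that it lies in a closed-form-summable family, and — above all — pinning down the overall constant (the $4\pi$, the $(k-1)!$, and the Pochhammer factors $(\tfrac12\pm it_3)_{k/2}$) without error. By comparison, the group-theoretic reduction of the first two paragraphs and the $\Gamma$-function bookkeeping of the last are routine.
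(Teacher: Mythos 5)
Your route is genuinely different from the paper's. The paper does not touch the three--fold matrix--coefficient integral over $\PGL_2(\R)$ at all: it invokes the Michel--Venkatesh lemma (Lemma~\ref{lem:MV}), which replaces $I'_v$ by $\abs{\ell_{\rm RS}}^2$ for a \emph{one}--dimensional Rankin--Selberg--type period $\ell_{\rm RS}=\int_0^\infty W_1(a(y))W_2(a(y))f_3(a(y))\abs{y}^{-1}d^\times y$ in the Whittaker/induced models. Because the weights sum to zero the $K$--integral is trivial, and the remaining integral is the single Mellin transform $\int_0^\infty e^{-2\pi y}K_{it_2}(2\pi y)\,y^{k/2+it_3}\,d^\times y$, evaluated in closed form from the tables; the constants come out automatically from the explicit Whittaker norms $\langle W_k^k,W_k^k\rangle=(k-1)!/(4\pi)^k$ and $\langle W_0^\lambda,W_0^\lambda\rangle=\Gamma(\tfrac12+it_2)\Gamma(\tfrac12-it_2)/\pi$, and the Haar--measure ambiguity is absorbed into the lemma. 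Your Cartan--decomposition computation is the more classical, self--contained alternative, but it pays for that by having to evaluate a much harder special--function integral and by having to fix the measure normalization by hand.

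There are two concrete gaps in your sketch. First, the radial integrand contains a product of \emph{two} hypergeometric factors, not one: $\Phi_2(a_v)$ is the spherical function $\varphi_{it_2}$ (a ${}_2F_1$) and $\Phi_3(a_v)$, the weight-$(-k)$ generalized spherical function in $\pi_{it_3}$, is an elementary prefactor times \emph{another} ${}_2F_1$. So after your substitution the integral has the shape $\int_0^1 u^A(1-u)^B\,{}_2F_1(\cdot;u)\,{}_2F_1(\cdot;u)\,du$, not $\int_0^1 u^A(1-u)^B\,{}_2F_1(\cdot;u)\,du$, and a single Euler representation does not reduce it to a ${}_3F_2(1)$; you would need a double Euler representation (generically a ${}_4F_3$ or a Weber--Schafheitlin--type product formula), and you have not verified that the resulting series lies in a summable family for the parameter pattern $(k,0,-k)$ -- which is precisely the step you yourself flag as the main obstacle. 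Second, the prefactor $4\pi/((k-1)!(\tfrac12+it_3)_{k/2}(\tfrac12-it_3)_{k/2})$ depends on the choice of Haar measure on $\PGL_2(\R)$ entering the definition of $I'_v$; your proposal never specifies that measure, so even a correct evaluation of the radial integral would leave the constant undetermined. Both issues are repairable in principle (such direct archimedean computations appear in Watson's thesis), but as written the proof of the first displayed formula is not complete. The passage from $I'_v$ to $I_v$ via the local $L$--factors and the duplication formula is fine and matches Lemma~\ref{lem:Lfactor}.
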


\subsection{Real local factors}

For the remainder of this appendix, we work locally over a real place.  Since the place $v$ is assumed fixed, we remove subscripts which refer to it.  In particular, the $L$-functions are local.  We trust that no confusion will arise between these and the global $L$-function considered above.  (For example, $L(s,\Pi)$, to be defined below, represents the local $L$-factor $L_v(s,\Pi)$ appearing in equation~\eqref{eq:ItoIprime}.)

We will assume, however, that the principal series $\pi_{it}$ is unitary.  (This is automatically true if $\pi_{it}$ is the local component of an automorphic representation.)  This implies that $t$ is either real or purely imaginary of absolute value less than $1/2$.  This requirement will be used implicitly to guarantee that certain integrals converge and that certain functions are real valued.  We will use these facts without further mention.

We record the relevant local factors for representations of $\GL_2(\R)$.  Let
 $$ \Gamma_\R(s) = \pi^{-s/2}\Gamma(s/2),\quad \mbox{and}\quad \Gamma_\C(s) = \Gamma_\R(s)\Gamma_\R(s+1)=2(2\pi)^{-s}\Gamma(s) $$
where $\Gamma(s) = \int_0^\infty y^s e^{-y} d^\times y$ when $\re(s)>0$ and is extended by analytic continuation elsewhere.  Note that
\begin{equation}\label{eq:specialvaluesofGamma}
 \Gamma_\R(1) = 1, \qquad \Gamma_\R(2) = \frac{1}{\pi},\qquad \mbox{and} \qquad \Gamma_\C(m) = \frac{(m-1)!}{2^{m-1}\pi^m}.
\end{equation}

We recall basic facts about the local Langlands correspondence for $\GL_2(\R)$ as found in Knapp \cite{Kna}.  The Weil group $W_\R=\C^\times \cup j \C^\times$ where $j^2=-1$ and $jzj^{-1}=\bar{z}$ for $z\in \C^\times$.  The irreducible representations of $W_\R$ are all either 1-dimensional or 2-dimensional.  The 1-dimensional representations are parametrized by $\delta\in \{0,1\}$ and $t\in \C$:
 $$\rho_1(\delta,t): \begin{array}{c} z\mapsto \abs{z}^t \\ j\mapsto (-1)^\delta.\end{array}$$
The irreducible 2-dimensional representations are parametrized by positive integers $m$ and $t\in \C$:
 $$ \rho_2(m,t): \begin{array}{c} re^{i\theta}\mapsto \matr{r^{2t}e^{im\theta}}{0}{0}{r^{2t}e^{-im\theta}} \\ \phantom{.} \\ j\mapsto \matr{0}{(-1)^m}{1}{0} \end{array} $$

Defining $\rho_2(0,t)=\rho_1(0,t)\oplus\rho_1(1,t)$ and $\rho_2(m,t)=\rho_2(\abs{m},t)$, the following is an elementary exercise.

\begin{lemma}\label{lem:LangCorr}
Every (semisimple) finite dimensional representation of $W_\R$ is a direct sum of irreducibles each of dimension one or two.  Under the operations of direct sum and tensor product, the following is a complete set of relations. 
\begin{gather*}
 \rho_2(m,t) \simeq \rho_2(-m,t) \\
 \rho_2(0,t) \simeq \rho_1(0,t)\oplus \rho_1(1,t) \\
 \rho_1(\delta_1,t_1)\otimes \rho_1(\delta_2,t_2) \simeq \rho_1(\delta,t_1+t_2) \\
 \rho_1(\delta,t_1)\otimes \rho_2(m,t_2) \simeq \rho_2(m,t_1+t_2) \\
 \rho_2(m_1,t_1)\otimes \rho_2(m_2,t_2) \simeq \rho_2(m_1+m_2,t_1+t_2)\oplus\rho_2(m_1-m_2,t_1+t_2) 
\end{gather*}
In the third line, $\delta=\delta_1+\delta_2\pmod{2}$.  Moreover, if $\wt{\rho}$ denotes the contragradient of $\rho$ then
 $$ \wt{\rho_1(\delta,t)} \simeq \rho_1(\delta,-t), \quad \mbox{and} \quad \wt{\rho_2(m,t)} \simeq \rho_1(m,-t). $$
\end{lemma}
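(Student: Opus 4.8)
The plan is to prove Lemma~\ref{lem:LangCorr} by reducing it to two sub-tasks: a classification of the irreducible representations of $W_\R$, and the decomposition into irreducibles of tensor products of irreducibles. Since the statement concerns semisimple representations, once these two are in hand the ``complete set of relations'' assertion is automatic, because the relations let one bring any semisimple representation to a normal form $\bigoplus \rho_1(\delta_i,t_i)\oplus\bigoplus \rho_2(m_j,t_j)$ with $m_j>0$, and two such are isomorphic iff their parameter multisets coincide. Both sub-tasks I would carry out by Mackey/Clifford theory applied to the abelian normal subgroup $\C^\times\triangleleft W_\R$ of index two.

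First I would fix notation for the characters of $\C^\times$: write $\chi_{t,m}(re^{i\theta})=r^{2t}e^{im\theta}$ for $t\in\C$, $m\in\Z$, so that $\abs{\cdot}^t=\chi_{t,0}$ in the appendix's normalization (here $\abs{\cdot}$ is the normalized module $z\mapsto z\bar z$ of $\C^\times$, which is what makes the tensor relations homogeneous in $t$). Since $jzj^{-1}=\bar z$, conjugation by $j$ sends $\chi_{t,m}$ to $\chi_{t,-m}$. Because $\C^\times$ is abelian, normal of index two, every irreducible $\rho$ of $W_\R$ embeds in $\Ind_{\C^\times}^{W_\R}\chi$ for a character $\chi$ occurring in $\rho|_{\C^\times}$, hence $\dim\rho\le2$; either $\rho$ is one-dimensional, in which case $\rho|_{\C^\times}=\chi$ is $j$-fixed so $\chi=\chi_{t,0}$ for some $t$, $\rho(j)^2=\rho(j^2)=\chi_{t,0}(-1)=1$, $\rho(j)=\pm1$, and $\rho=\rho_1(\delta,t)$; or $\rho=\Ind_{\C^\times}^{W_\R}\chi_{t,m}$ with $m\ne0$, which is irreducible and equals $\rho_2(\abs{m},t)$, with $\Ind\chi_{t,m}\cong\Ind\chi_{t,-m}$ giving $\rho_2(m,t)\cong\rho_2(-m,t)$. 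This exhausts the irreducibles, and $\Ind_{\C^\times}^{W_\R}\chi_{t,0}$ matches the convention $\rho_2(0,t):=\rho_1(0,t)\oplus\rho_1(1,t)$.

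Next I would verify the five relations. The two products with a one-dimensional factor are immediate on $\C^\times$ and on $j$: $\rho_1(\delta_1,t_1)\otimes\rho_1(\delta_2,t_2)$ acts by $z\mapsto\abs{z}^{t_1+t_2}$ and $j\mapsto(-1)^{\delta_1+\delta_2}$, giving $\rho_1(\delta_1+\delta_2,t_1+t_2)$ (first index mod $2$); and $\rho_1(\delta,t_1)\otimes\rho_2(m,t_2)$, which by the projection formula equals $\Ind_{\C^\times}^{W_\R}(\chi_{t_1,0}\cdot\chi_{t_2,m})$, becomes the standard model of $\rho_2(m,t_1+t_2)$ after conjugating by $\mathrm{diag}((-1)^\delta,1)$ to normalize the value on $j$. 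For $\rho_2(m_1,t_1)\otimes\rho_2(m_2,t_2)$ I would restrict to $\C^\times$, where it is the sum of the four characters $\chi_{t_1+t_2,\pm m_1\pm m_2}$; these form the two $j$-orbits $\{\pm(m_1+m_2)\}$ and $\{\pm(m_1-m_2)\}$, and reading off the $W_\R$-action on the corresponding invariant summands yields $\rho_2(m_1+m_2,t_1+t_2)\oplus\rho_2(m_1-m_2,t_1+t_2)$, the degenerate specializations being absorbed by the convention for $\rho_2(0,\cdot)$. Finally $\wt{\rho_1(\delta,t)}\cong\rho_1(\delta,-t)$ is clear, and $\wt{\rho_2(m,t)}\cong\Ind_{\C^\times}^{W_\R}\chi_{-t,-m}\cong\rho_2(m,-t)$ since the contragredient of an induced representation is the induction of the contragredient character.

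The main obstacle will be the bookkeeping of the $j$-action — the quadratic signs $(-1)^\delta$ and $(-1)^m$ — as it is carried through the tensor products: matching $\rho_1\otimes\rho_2$ to the standard form of $\rho_2$ requires writing down an explicit intertwiner, and the degenerate cases ($m=0$, or $m_1=\pm m_2$) where a nominally two-dimensional $\rho_2$ reduces must be handled consistently with $\rho_2(0,t)=\rho_1(0,t)\oplus\rho_1(1,t)$. These points are routine but are the only places where care is needed.
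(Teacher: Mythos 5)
Your proof is correct, and it is the standard argument. The paper offers no proof of this lemma at all (it is declared ``an elementary exercise''), and your Mackey--Clifford analysis via the index-two abelian normal subgroup $\C^\times\triangleleft W_\R$ is exactly the intended way to carry out that exercise; note also that you correctly repair the typo in the final displayed relation, which should read $\wt{\rho_2(m,t)}\simeq\rho_2(m,-t)$ rather than $\rho_1(m,-t)$.
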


Attached to each irreducible representation $\rho$ of $W_\R$ is an $L$-factor
 $$ L(s,\rho_1(\delta,t)) = \Gamma_\R(s+t+\delta), \quad \mbox{and} \quad L(s,\rho_2(m,t)) = \Gamma_\C(s+t+\frac{\abs{m}}{2}). $$
Writing a general representation $\rho$ as a direct sum of irreducibles $\rho_1\oplus \cdots \oplus \rho_r$, we define
 $$ L(s,\rho) = \prod_{i=1}^r L(s,\rho_i). $$
In particular, given $\rho$, the adjoint representation is 
 $$ \Ad(\rho) \simeq \rho\otimes \wt{\rho} \ominus \rho_1(0,0) $$
since $\rho_1(0,0)$ is the trivial representation.

Under the Langlands correspondence, admissible representations $\pi$ of $\GL_2(\R)$ correspond to 2-dimensional representations $\rho=\rho(\pi)$ of $W_\R$.   For example, $\rho(\pi_{it})=\rho_1(0,it)\oplus \rho_1(0,-it)$ and $\rho(\DS{k})=\rho_2(k-1,0)$.  Thus the local factors for the discrete series and principal series representations are
 $$ L(s,\DS{k}) = \Gamma_\C(s+(k-1)/2), \quad \mbox{and} \quad
    L(s,\pi_{it}) = \Gamma_\R(s+it)\Gamma_\R(s-it). $$

We define 
 $$ L(s,\Pi) = L(s,\rho(\DS{k})\otimes \rho(\pi_{it_2})\otimes \rho(\pi_{it_3})) $$
and
 $$ L(s,\Pi,\Ad) = L(s,\Ad\rho(\DS{k})\oplus \Ad\rho(\pi_{it_2})\oplus \Ad\rho(\pi_{it_3})). $$

\begin{lemma}\label{lem:Lfactor}
Let $\Pi=\DS{k}\otimes \pi_{it_2}\otimes \pi_{it_3}$.  The normalizing factor relating $I_v$ and $I_v'$ in \eqref{eq:ItoIprime} at a real place $v$ with local factor isomorphic to $\Pi$ is
\begin{multline*}
 \frac{L(1,\Pi,\Ad)}{\Gamma_\R(2)^2L(1/2,\Pi)} =  2^{k-3}\pi^{k-1}(k-1)! \\ \frac{\Gamma(\frac{1}{2}+it_2)\Gamma(\frac{1}{2}-it_2)\Gamma(\frac{1}{2}+it_3)\Gamma(\frac{1}{2}-it_3)}{\Gamma(\frac{k}{2}+it_2+it_3)\Gamma(\frac{k}{2}-it_2+it_3)\Gamma(\frac{k}{2}+it_2-it_3)\Gamma(\frac{k}{2}-it_2-it_3)}.
\end{multline*}
\end{lemma}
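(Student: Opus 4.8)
The plan is to evaluate the two local $L$-factors $L(1/2,\Pi)$ and $L(1,\Pi,\Ad)$ directly from the Langlands parameters of the three components, using the tensor-product and contragredient relations of Lemma~\ref{lem:LangCorr}, and then to collect the resulting powers of $2$ and $\pi$. Recall that $\rho(\DS{k})=\rho_2(k-1,0)$ (so that $L(s,\DS{k})=\Gamma_\C(s+\tfrac{k-1}{2})$) and $\rho(\pi_{it_j})=\rho_1(0,it_j)\oplus\rho_1(0,-it_j)$ for $j=2,3$. First I would expand $\rho(\pi_{it_2})\otimes\rho(\pi_{it_3})$ as the sum of the four characters $\rho_1(0,i\varepsilon_2 t_2+i\varepsilon_3 t_3)$ with $\varepsilon_2,\varepsilon_3\in\{\pm1\}$, and then tensor each with $\rho_2(k-1,0)$ using $\rho_1(0,a)\otimes\rho_2(m,t)\simeq\rho_2(m,a+t)$, obtaining
\[
 \rho(\Pi)\;\simeq\;\bigoplus_{\varepsilon_2,\varepsilon_3\in\{\pm1\}}\rho_2\bigl(k-1,\;i\varepsilon_2 t_2+i\varepsilon_3 t_3\bigr).
\]
Hence $L(1/2,\Pi)=\prod_{\varepsilon_2,\varepsilon_3}\Gamma_\C\bigl(\tfrac k2+i\varepsilon_2 t_2+i\varepsilon_3 t_3\bigr)$. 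Writing $\Gamma_\C(m)=2(2\pi)^{-m}\Gamma(m)$ and using that the four shifts $i\varepsilon_2 t_2+i\varepsilon_3 t_3$ sum to zero, the constant prefactors combine to $16\,(2\pi)^{-2k}$, leaving the product of the four values $\Gamma(\tfrac k2\pm it_2\pm it_3)$.

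Next I would treat the adjoint side summand by summand. For the discrete series, $\Ad\rho(\DS{k})\simeq\rho_2(k-1,0)\otimes\rho_2(k-1,0)\ominus\rho_1(0,0)$; the tensor product decomposes as $\rho_2(2k-2,0)\oplus\rho_2(0,0)=\rho_2(2k-2,0)\oplus\rho_1(0,0)\oplus\rho_1(1,0)$, so after removing the trivial summand one gets $\rho_2(2k-2,0)\oplus\rho_1(1,0)$ and therefore $L(1,\Ad\rho(\DS{k}))=\Gamma_\C(k)\,\Gamma_\R(2)=\frac{(k-1)!}{2^{k-1}\pi^{k+1}}$. For each principal series, $\Ad\rho(\pi_{it_j})\simeq\rho_1(0,0)\oplus\rho_1(0,2it_j)\oplus\rho_1(0,-2it_j)$, so $L(1,\Ad\rho(\pi_{it_j}))=\Gamma_\R(1+2it_j)\Gamma_\R(1-2it_j)$, which I would rewrite as $\pi^{-1}\Gamma(\tfrac12+it_j)\Gamma(\tfrac12-it_j)$ via $\Gamma_\R(s)=\pi^{-s/2}\Gamma(s/2)$. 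Multiplying these three local factors gives
\[
 L(1,\Pi,\Ad)=\frac{(k-1)!}{2^{k-1}\pi^{k+3}}\,\Gamma(\tfrac12+it_2)\Gamma(\tfrac12-it_2)\Gamma(\tfrac12+it_3)\Gamma(\tfrac12-it_3).
\]

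Finally I would form the quotient $L(1,\Pi,\Ad)\big/\bigl(\Gamma_\R(2)^2\,L(1/2,\Pi)\bigr)$, with $\Gamma_\R(2)=\pi^{-1}$: the ratio of Gamma products is already exactly the one appearing in the statement, and collecting the remaining powers of $2$ and $\pi$ (using $\Gamma_\R(1)=1$, $\Gamma_\R(2)=1/\pi$, $\Gamma_\C(m)=(m-1)!/(2^{m-1}\pi^m)$) yields the constant $2^{k-3}\pi^{k-1}(k-1)!$, which is the Lemma. There is no real obstacle here — the content is pure local Langlands bookkeeping — but the step deserving the most care is the decomposition of the adjoint representations: one must subtract exactly one copy of the trivial representation $\rho_1(0,0)$, keep $\Ad\rho(\DS{k})$ and each $\Ad\rho(\pi_{it_j})$ three-dimensional, and then assemble consistently the powers of $2$ and $\pi$ that the $\Gamma_\C$'s and $\Gamma_\R$'s contribute to the final ratio.
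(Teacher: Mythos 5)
Your computation is correct and follows essentially the same route as the paper: decompose $\rho(\Pi)$ and the adjoint parameters via the tensor-product relations of Lemma~2, read off $L(1/2,\Pi)=2^4(2\pi)^{-2k}\prod\Gamma(\tfrac k2\pm it_2\pm it_3)$ and $L(1,\Pi,\Ad)=\tfrac{(k-1)!}{2^{k-1}\pi^{k+3}}\prod\Gamma(\tfrac12\pm it_j)$, and divide by $\Gamma_\R(2)^2=\pi^{-2}$. You also (correctly) use $\rho(\DS{k})=\rho_2(k-1,0)$, silently fixing the paper's transposed parameters $\rho_2(0,k-1)$; the bookkeeping of powers of $2$ and $\pi$ comes out to $2^{k-3}\pi^{k-1}(k-1)!$ exactly as claimed.
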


\begin{proof}
Using Lemma~\ref{lem:LangCorr}, one can easily show that
\begin{eqnarray*}
 L(1/2,\Pi) = &  \prod_{\varepsilon,\varepsilon'\in \{\pm 1\}} \Gamma_\C\left(\varepsilon it_2+\varepsilon' it_3+\frac{k}{2}\right) \\
  = & 2^{4}(2\pi)^{-2k}\prod_{\varepsilon,\varepsilon'\in \{\pm 1\}} \Gamma\left(\frac{k}{2} + \varepsilon it_2+\varepsilon' it_3\right)
\end{eqnarray*}
and, applying \eqref{eq:specialvaluesofGamma}, $L(1,\Pi,\Ad)$ is equal to
\begin{gather*}
  \big( \Gamma_\C(k) \Gamma_\R(2) \big) \big( \Gamma_\R(1+2it_2) \Gamma_\R(1-2it_2) \Gamma_\R(1) \big)  \big( \Gamma_\R(1+2it_3) \Gamma_\R(1-2it_3) \Gamma_\R(1) \big) \\
  \quad =  \frac{(k-1)!}{2^{k-1}\pi^{k+3}} \Gamma\left( \frac{1}{2}+it_2 \right) \Gamma\left( \frac{1}{2}-it_2 \right) \Gamma\left( \frac{1}{2}+it_3 \right) \Gamma\left( \frac{1}{2}-it_3 \right).
\end{gather*}
Combining these, we arrive at the desired formula.
\end{proof}

\subsection{Whittaker models}

As a matter of notation, set
 $$ a(y) = \matr{y}{0}{0}{1},\quad z(u) = \matr{u}{0}{0}{u}, \quad n(x) = \matr{1}{x}{0}{1}. $$

Let $\pi$ be an irreducible (unitary) infinite dimensional representation of $G$ with central character $\omega$, and let $\psi:\R\to \C^\times$ be a nontrivial additive character.  Then there is a unique space of functions $\W(\pi,\psi)$ isomorphic to $\pi$ such that
\begin{equation}\label{eq:Whitt}
 W(z(u)n(x)g) = \omega(u)\psi(x)W(g)
\end{equation}
for all $g\in G$.  Recall that the inner product on $\W(\pi,\psi)$ is given by
 $$ \langle W,W' \rangle = \int_{\R^\times} W(a(y))\overline{W'(a(y))} d^\times y. $$
We fix $\psi:\R\to \C^\times$ once and for all to be the character $\psi(x)=e^{2\pi i x}$.

If the central character of $\pi$ is trivial, and $W\in \W(\pi,\psi)$ has weight $m$, \eqref{eq:Whitt} becomes
\begin{equation}\label{eq:WhitIP}
  W(z(u)n(x)a(y) \kappa_\theta) = e^{2\pi ix}W(a(y))e^{im\theta}.
\end{equation}
This, by the Iwasawa decomposition, determines $W$ completely provided we can describe $w(y)=W(a(y))$.  This can be accomplished for the weight $k$ vector $W_k^k\in \W(\DS{k},\psi)$ by utilizing the fact that $W_k^k$ is annihilated by the lowering operator $X^- \in \rm{Lie}(\GL_2(\R))$.  Applying $X^-$ to \eqref{eq:WhitIP}, one finds that $w(y)$ satisfies a certain differential equation whose solution is easily obtained.  The unique solution with moderate growth is, up to a constant, 
\begin{equation}\label{eq:DKkWhitvector}
 W_k^k(a(y)) = \left\{ \begin{array}{cl} y^{k/2}e^{-2\pi y} & \mbox{ if }y\geq 0 \\ 0 & \mbox{ if }y<0. \end{array}\right.
\end{equation}

We calculate directly (so long as $\re(s)>1-\frac{k+k'}{2}$) that 
\begin{equation}\label{eq:RSk}
 \begin{array}{rcl}
 \displaystyle{\int_0^\infty W_k^k(a(y)) W_{k'}^{k'}(a(y)) y^{s-1} d^\times y}
  & = & \displaystyle{\int_0^\infty y^{s-1+(k+k')/2}e^{-4\pi y} d^\times y} \\
  & = & \displaystyle{\frac{\Gamma(s-1+(k+k')/2)}{(4\pi)^{s-1+(k+k')/2}}}
 \end{array} 
\end{equation}
By letting $s=1$ and $k=k'$, this implies that
\begin{equation}\label{eq:WkNorm}
 \langle W_k^k,W_k^k \rangle = \frac{(k-1)!}{(4\pi )^k}.
\end{equation}

Analogously, if $W_m^\lambda\in \W(\pi_{it},\psi)$ is a weight $m$-vector which is an eigenvector for the action of the Laplace operator $\Delta$ of eigenvalue $\lambda$, one can apply $\Delta$ to \eqref{eq:Whitt} to see that $w(y)=W_m^\lambda(a(y))$ satisfies the confluent hypergeometric differential equation
\begin{equation}\label{eq:WhittDE}
w''  + \left[ -\frac{1}{4} + \frac{m}{2y} + \frac{\lambda}{y^2}\right] w = 0.
\end{equation}
Therefore, $W_m^\lambda(a(y))= W_{\frac{m}{2},it}(\abs{y})$ is the unique solution of \eqref{eq:WhittDE} with exponential decay as $\abs{y}\to \infty$ and $\lambda = \frac{1}{2}+t^2$.  The weight zero vector $W_0^\lambda$ can be expressed in terms of the incomplete Bessel function:
\begin{equation}\label{eq:PSitWhitvector}
 W_0^\lambda(a(y)) = W_{0,it}(y) = 2\pi^{-1/2}\abs{y}^{1/2} K_{it}(2\pi \abs{y}).
\end{equation}


By formula~(6.8.48) of \cite{transforms}, it follows that
\begin{multline}\label{eq:RSit}
 \int_0^\infty W_{0,it_1}(a(y))W_{0,it_2}(a(y)) y^{s-1} d^\times y \\ = \frac{4}{\pi}\int_0^\infty K_{it_1}(2\pi y) K_{it_2}(2\pi y) y^s d^\times y \\
  = \frac{1}{2\pi^{s+1}}\frac{\Gamma(\frac{s+it_1+it_2}{2})\Gamma(\frac{s-it_1+it_2}{2})\Gamma(\frac{s+it_1-it_2}{2})\Gamma(\frac{s-it_1-it_2}{2})}{\Gamma(s)}.
\end{multline}
Evaluating this at $s=1$ in the case that $t_1=t_2=t$, we have that
\begin{equation}\label{eq:WitNorm}
 \langle W_0^\lambda, W_0^\lambda \rangle = \frac{\Gamma(\frac{1}{2}+it)\Gamma(\frac{1}{2}-it)}{\pi}.
\end{equation}
Note that we have used that $W_0^\lambda(a(y))$ is an even function and $\Gamma(1/2)=\sqrt{\pi}$.

\begin{remark}
An explicit intertwining map $\pi\to \W(\pi,\psi)$ is given, when the integral is convergent, by
\begin{equation}\label{eq:intertwiner}
 f\mapsto W_f\qquad W_f(g) = \pi^{-1/2}\int_\R f(wn(x)g)\overline{\psi(x)}dx
\end{equation}
where $w=\smatr{0}{1}{-1}{0}$, and this can be extended by analytic continuation elsewhere.

As an alternative to the strategy above, one can deduce equations \eqref{eq:RSk} and \eqref{eq:RSit} by working directly from \eqref{eq:intertwiner}.  (See \cite{Ga2}.)  The normalization in \eqref{eq:PSitWhitvector} coincides with this choice of intertwiner.
\end{remark}

\subsection{Proof of Theorem 3}

We are now in a position to prove Theorem~\ref{appthm}.  Having laid the groundwork above, it is a simple consequence of the following result \cite[Lemma~3.4.2]{MV}.

\begin{lemma}[Michel-Venkatesh]\label{lem:MV}
Let $\pi_1,\pi_2,\pi_3$ be tempered representations of $\GL_2(\R)$ with $\pi_3$ a principal series.  Fixing isometries $\pi_1\to \W(\pi_1,\psi)$ and $\pi_2\to \W(\pi_2,\overline{\psi})$, we may associate for $f_j \in \pi_j$ vectors $W_j$ in the Whittaker model.  Then the form $\ell_{\rm RS}: \pi_1\otimes \pi_2 \otimes \pi_3 \to \C$ given by
\begin{multline}\label{eq:RS}
 \ell_{\rm RS} (f_1\otimes f_2\otimes f_3) \\ = \int_{K}\int_{\R^\times} W_1(a(y) \kappa) W_2(a(y) \kappa) f_3(a(y) \kappa) \abs{y}^{-1} d^\times y d\kappa 
\end{multline}
satisfies $\abs{\ell_{\rm RS}}^2=I'(f_1\otimes f_2 \otimes f_3)$.
\end{lemma}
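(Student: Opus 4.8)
The plan is to prove this as a local identity at the place $v$ by directly unfolding the matrix-coefficient integral $I'$, taking advantage of the fact that $\pi_3$ is a principal series; the conceptual backbone is the local uniqueness of invariant trilinear forms.

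First I would record that $\ell_{\rm RS}$ is a well-defined, nonzero, $G$-invariant trilinear form. Via the Iwasawa decomposition, $\int_K\int_{\R^\times}$ is integration over $N\backslash G$, and the integrand $W_1(g)W_2(g)f_3(g)$ descends to $N\backslash G$: realizing $\pi_2$ in its $\overline\psi$-Whittaker model (or absorbing the resulting sign) makes $W_1(ng)W_2(ng)=W_1(g)W_2(g)$, while $f_3$ is $N$-invariant in the induced model; $G$-invariance is then right-invariance of Haar measure on $N\backslash G$, and absolute convergence follows from the Jacquet--Shalika decay of the tempered Whittaker functions $W_1,W_2$ against the polynomially bounded $f_3$. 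By Prasad's theorem, $\dim_{\C}\mathrm{Hom}_G(\pi_1\otimes\pi_2\otimes\pi_3,\C)\le 1$ (for $\PGL_2(\R)$, hence for $\GL_2(\R)$ with trivial product of central characters), and likewise for the $\overline{\pi_i}$. Applying this to the linear functional $\varphi'\mapsto\int_G\langle\pi(g)\varphi,\varphi'\rangle\,dg$ on $\overline{\pi_1\otimes\pi_2\otimes\pi_3}$ (which is $G$-invariant, hence a multiple of $\overline{\ell_{\rm RS}}$, with the multiple depending $G$-invariantly and linearly on $\varphi$, hence itself a multiple of $\ell_{\rm RS}$), one gets $I'(\varphi)=\int_{\PGL_2(\R)}\prod_i\langle\pi_i(g)\varphi_i,\varphi_i\rangle\,dg=c\,|\ell_{\rm RS}(\varphi)|^2$ for a constant $c$; convergence of $I'$ comes from matrix coefficients of tempered representations of $\PGL_2(\R)$ lying in $L^3$, and positivity of $I'$ (it is the integral of the positive-definite function $g\mapsto\langle\pi(g)\varphi,\varphi\rangle$) gives $c\ge 0$. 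The substance of the lemma is that $c=1$.

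To establish $c=1$ I would unfold $I'$ explicitly. Insert the natural model expressions $\langle\pi_i(g)f_i,f_i\rangle=\int_{\R^\times}W_i(a(y)g)\overline{W_i(a(y))}\,d^\times y$ for $i=1,2$ and $\langle\pi_3(g)f_3,f_3\rangle=\int_K f_3(\kappa g)\overline{f_3(\kappa)}\,d\kappa$ into $I'=\int_{\PGL_2(\R)}(\cdots)(\cdots)(\cdots)\,dg$, interchange integrals (legitimate by the temperedness estimates), and run the standard Rankin--Selberg unfolding — split the $G$-integration by Iwasawa, use that the two Whittaker characters cancel on the unipotent ($\psi\overline\psi=1$), and change variables to collapse one of the torus integrations against the group integration — to arrive at
\[
 I' \;=\; \Big|\int_K\int_{\R^\times} W_1(a(y)\kappa)\,W_2(a(y)\kappa)\,f_3(a(y)\kappa)\,|y|^{-1}\,d^\times y\,d\kappa\Big|^2 \;=\; |\ell_{\rm RS}|^2,
\]
provided the Haar measures on $\PGL_2(\R)$, $N\backslash G$, $N$, $K$ and $\R^\times$ are normalized compatibly. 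The step I expect to be the main obstacle is exactly this last point — organizing the chain of substitutions so that no spurious factor survives (a power of $2\pi$ from a Haar normalization, or a factor $2$ from the two components of $K=O(2)$), which is what forces $c$ to be precisely $1$ — together with the routine verification of absolute convergence at each stage. As a cross-check, one can evaluate both sides on a single convenient test vector (for instance all three representations principal series with their weight-zero vectors, where each side reduces to the classical Barnes-integral Rankin--Selberg evaluation) and read off $c=1$ independently.
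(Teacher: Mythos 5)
You should first note that the paper does not prove this lemma at all: it is quoted from Michel--Venkatesh \cite{MV} and used as a black box, so there is no internal proof to compare against. Your proposal reconstructs what is essentially the argument of \cite{MV}: (i) $\ell_{\rm RS}$ is a $G$-invariant trilinear form (your observation that one of $W_1,W_2$ should be taken in the $\overline\psi$-model so that the integrand descends to $N\bs G$ is a real imprecision in the statement as printed, and your fix is the right one); (ii) archimedean multiplicity one for trilinear forms, together with the polarization trick passing from uniqueness of invariant functionals to uniqueness of invariant Hermitian forms, reduces the claim to $I'=c\,\abs{\ell_{\rm RS}}^2$ with $c\ge 0$; (iii) the content is $c=1$. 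Steps (i) and (ii) are sound (for the Archimedean multiplicity one you want Oksak/Loke rather than Prasad's $p$-adic theorem, and you should also check $\ell_{\rm RS}\not\equiv0$, which for $\pi_3$ a principal series follows from the explicit evaluation in Theorem 3).

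The gap is in step (iii). The one-line description of the unfolding --- Iwasawa, cancel the characters on the unipotent, collapse a torus integration --- does not by itself produce the \emph{square} of $\ell_{\rm RS}$, and it omits the mechanism that does. After substituting $\langle\pi_3(g)f_3,f_3\rangle=\int_K f_3(\kappa g)\overline{f_3(\kappa)}\,d\kappa$, translating $g\mapsto\kappa^{-1}g$, and unfolding the induced model of $\pi_3$ against $g=n(x)a(y)\kappa'$, one is left with an integral over $N$ of a product of two matrix coefficients of $\pi_1$ and $\pi_2$; the decisive identity is the Parseval statement on $N\cong\R$,
\begin{equation*}
\int_{\R}\langle\pi_1(n(x)h)f_1,f_1\rangle\,\langle\pi_2(n(x)h)f_2,f_2\rangle\,dx
=\int_{\R^\times}W_1(a(u)h)\,W_2(a(u)h)\,\overline{W_1(a(u))\,W_2(a(u))}\,\frac{d^\times u}{\abs{u}},
\end{equation*}
obtained by inserting the Whittaker expansions of both coefficients and integrating out $x$ against $\psi(u_1x)\overline{\psi(u_2x)}$. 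This is what collapses the two independent torus variables into one, produces the $\abs{u}^{-1}$ appearing in the definition of $\ell_{\rm RS}$, and is the sole source of the two conjugate copies of $\ell_{\rm RS}$ (one in the variable $h=a(y)\kappa'$ ranging over $N\bs G$, one from the fixed-vector factors). Without this step the computation does not close and $c$ is not determined; your fallback of evaluating both sides on a single test vector is legitimate once (ii) is in place, but it requires an independent evaluation of the threefold matrix-coefficient integral $I'$ on that vector, which is no easier than the unfolding. As written, the proposal is a correct plan with its central computation left unexecuted.
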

Note that although $\ell_{\rm RS}$ depends on the particular choice of isometry $\pi_j\to \mathcal{W}_j$, the value $\abs{\ell_{\rm RS}}^2$ does not.

For $j=1,2$ we have $\lambda_j=\frac{1}{4}+t_j^2$.  Recall our choice of test functions: $W_1=W_k^k$, $W_2=W_0^{\lambda_2}$, and $f_3\in \pi_{it_3}$ of weight $-k$.  Since the sum of the weights of these is zero, the integral over $K$ in \eqref{eq:RS} is trivial, and 
\begin{align*}
 \ell_{\rm RS}(W_1\otimes W_2\otimes f_3)
 = & \int_0^\infty W_1(a(y))W_2(a(y))f_3(a(y))\abs{y}^{-1} d^\times y \\
 = & \int_0^\infty e^{-2\pi y}y^{k/2} 2\pi^{-1/2} y^{1/2}K_{it_2}(2\pi y) y^{1/2+it_3}y^{-1} d^\times y \\
 = & 2\pi^{-1/2} \int_0^\infty e^{-2\pi y} K_{it_2}(2\pi y) y^{k/2+it_3} d^\times y \\
 = & \frac{2}{(4\pi)^{k/2+it_3}}\frac{\Gamma(\frac{k}{2}+it_2+it_3)\Gamma(\frac{k}{2}-it_2+it_3)}{\Gamma(\frac{1}{2}+\frac{k}{2}+it_3)} 
\end{align*}
In the final line we have used equation~(6.8.28) from \cite{transforms}.  This simplifies further by using the identity $\Gamma(z+m)= \Gamma(z)(z)_m$.

Recall that we have chosen $f_j$ such that $\langle f_j,f_j\rangle = 1$ for each $j$.  Therefore, in order to apply Lemma~\ref{lem:MV}, we must normalize $\ell_{\rm RS}$:
\begin{multline*}
 I'(f_1\otimes f_2\otimes f_3)
 =  \frac{ \abs{\ell_{\rm RS}(W_1\otimes W_2\otimes f_3)}^2}{\langle W_1,W_2\rangle \langle W_2,W_2\rangle}  \\ = \frac{4\pi}{(k-1)!(\frac{1}{2}-it_3)_{\frac{k}{2}}(\frac{1}{2}+it_3)_{\frac{k}{2}}} \times \\ 
 \times\frac{\Gamma(\frac{k}{2}+it_2+it_3)\Gamma(\frac{k}{2}+it_2-it_3)\Gamma(\frac{k}{2}-it_2-it_3)\Gamma(\frac{k}{2}-it_2+it_3)}{\Gamma(\frac{1}{2}+it_2)\Gamma(\frac{1}{2}-it_2)\Gamma(\frac{1}{2}+it_3)\Gamma(\frac{1}{2}-it_3)}
\end{multline*}

To complete the proof, we multiply by the normalizing factor of Lemma~\ref{lem:Lfactor}.

\begin{remark}
If one or more of the representations $\pi_{it_j}$ is a complementary series (i.e.\ if $\lambda_j<\frac{1}{4}$) then the result of Theorem~\ref{appthm} still holds, but the explicit calculation is somewhat different.  In this case, it is no longer true that for $r\in\R$
 $$ \abs{ \Gamma(r+it_j) }^2 = \Gamma(r+it_j)\Gamma(r-it_j), $$
nor is it true that $\langle f_j, f_j\rangle =1$.  Taking into account these differences, however, the final answer ends up agreeing with what has been calculated above.  Alternatively, as explained in \cite{MV}, a suitably polarized version of the main formula is meromorphic in the spectral parameters.  Hence, the result follows by analytic continuation.
\end{remark}




$\mathbf{Acknowledgements.}$ We thank Nalini Anantharaman and Steve Zelditch for clarifying various points in their papers \cite{AZ} and \cite{AZ2}. We are very grateful to the referee for the very careful
readings of the paper and for pointing to numerous errors in the computations and providing corrections.
\vspace{.5cm}
\bibliographystyle{plain}

\begin{thebibliography}{}
\bibitem{AZ} N.  Anantharaman, S. Zelditch, {\it Patterson-Sullivan
distributions and quantum ergodicity}  Ann.
Henri Poincare (2007), no. 2, 361-426.
\bibitem{AZ2}  N. Anantharaman, S.  Zelditch, {\it Intertwining the geodesic flow and the Schršdinger group on hyperbolic surfaces.} Math. Ann. 353 (2012), no. 4, 1103-1156.
\bibitem{Ba} Bateman, {\it Tables of Integral Transforms},
McGraw-Hill, New York, 1954.
\bibitem{BR}
Joseph Bernstein and Andre Reznikov.
\newblock Periods, subconvexity of {$L$}-functions and representation theory.
\newblock {\em J. Differential Geom.}, 70(1):129--141, 2005.
\bibitem{Bu} D. Bump, {\it Automorphic Forms on $GL(3,R)$},
Lecture Notes in Mathematics, Vol. 1083.
\bibitem{DI} J.-M. Deshouillers and H. Iwaniec, {\it Kloosterman Sums
and Fourier Coefficients of Cusp
Forms,} Invent. Math. 70, 219-288, 1982.
\bibitem{ef} B. Eckhardt, Fishman et al. {\it Approach to ergodicity
in quantum wave
functions}, Phys. Rev. E 52 (1995)
\bibitem{transforms}
A.~Erd{\'e}lyi, W.~Magnus, F.~Oberhettinger, and F.~G. Tricomi.
\newblock {\em Tables of integral transforms. {V}ol. {I}}.
\newblock McGraw-Hill Book Company, Inc., New York-Toronto-London, 1954.
\newblock Based, in part, on notes left by Harry Bateman.
\bibitem{FP} M. Feingold, A. Peres, {\it Distributin of matrix
elements of chaotic
systems}, Phy. Rev. A 34 (1986).
\bibitem{Ga2}
Paul~B. Garrett.
\newblock Standard archimedean integrals for {$GL(2)$}.
\newblock available at http://www.math.umn.edu/$\sim$garrett/.
\bibitem{Ga}
Paul~B. Garrett.
\newblock Decomposition of {E}isenstein series: {R}ankin triple products.
\newblock {\em Ann. of Math. (2)}, 125(2):209--235, 1987.
\bibitem{GJ}
Roger Godement and Herv{\'e} Jacquet.
\newblock {\em Zeta functions of simple algebras}.
\newblock Lecture Notes in Mathematics, Vol. 260. Springer-Verlag, Berlin,
  1972.
\bibitem{GRS} Amit Ghosh, Andre Reznikov, Peter Sarnak, {\it Nodal Domains of Maass Forms I}, GAFA 2013, Vol 23, pp1515-1568. 
\bibitem{gr} I. S. Gradshteyn and I. M. Ryzhik, {\it
Table of integrals, series, and products}, 4th ed., Academic Press,
New York, 1965.
\bibitem{GK}
Benedict~H. Gross and Stephen~S. Kudla.
\newblock Heights and the central critical values of triple product
  {$L$}-functions.
\newblock {\em Compositio Math.}, 81(2):143--209, 1992.
\bibitem{HK} M. Harris, S. Kudla,
{\it The Central Critical Value of a Triple Product L-Function}, Ann.
of Math. (2) V. 133, 605-672.
\bibitem{He} D. Hejhal, The Selberg trace formula for PSL(2,R). Vol.
I, Lecture Notes in Mathematics, (1976), Vol. 548
\bibitem{hl} J. Hoffstein and P. Lockhart, {\it
Coefficients of Maass forms and the Siegel zero}, Ann. of Math.(2)
{\bf 140} (1994), no. 1, 161--181.
\bibitem{ichino} A. Ichino, {\it Trilinear forms and the central
values of triple product $L$-functions},
Duke Math. J. Volume 145, Number 2 (2008), 281-307.
\bibitem{II} A. Ichino, T. Ikeda, {\it On the periods of automorphic
forms on special orthogonal groups and the Gross-Prasad conjecture}.
GAFA, V.19, 2010, 1378-1425.
\bibitem{Iw90} H. Iwaniec, {\it Small eigenvalues of Laplacian for $\Gamma_0(N)$,} Acta Arith. 16 (1990), 65-82.
\bibitem{iwa} H. Iwaniec, {\it Topics in classical automorphic
forms}, Graduate Studies in Mathematics, vol. 17, American
Mathematical Society, Rhode Island, 1997.
\bibitem{ILS} H. Iwaniec, W. Luo, and, P. Sarnak, {\it Low lying zeros
of families of L-functions}, Publ. IHES, 2000
\bibitem{IK} H. Iwaniec, E. Kowalski, {\it  Analytic number theory.}
American Mathematical Society Colloquium Publications, 53. American
Mathematical Society, Providence, RI, 2004.
\bibitem{ILS} Iwaniec, H.; Luo, W.; Sarnak, P. {\it Low lying zeros of
families of $L$-functions.}  Inst. Hautes eudes Sci. Publ. Math.  No.
91  (2000), 55--131 (2001)
\bibitem{J94} D. Jakobson, {\it QUE for Eisenstein Series on $PSL_2(Z)\backslash
PSL_2(R)$}, Annales de l'Institut Fourier, 44(5) (1994), 1477-1504.
\bibitem{J97} D. Jakobson, {\it Equidistribution of cusp forms on
$PSL_2(Z)\backslash
PSL_2(R)$}, Ann. Inst. Fourier, 1997.
\bibitem{jut} Jutila, M. {\it
The spectral mean square of Hecke $L$-functions on the critical
line.} Publ. Inst. Math. (Beograd) (N.S.) 76(90) (2004), 41--55.
\bibitem{KimS} H. Kim and P. Sarnak, {\it Appendix to Functoriality for the exterior square of GL4
and symmetric fourth of GL2} by H. Kim. JOURNAL OF THEAMERICAN MATHEMATICAL SOCIETY, Volume 16, Number 1, Pages 139Ð183.

\bibitem{Kna}
A.~W. Knapp.
\newblock Local {L}anglands correspondence: the {A}rchimedean case.
\newblock In {\em Motives ({S}eattle, {WA}, 1991)}, volume~55 of {\em Proc.
  Sympos. Pure Math.}, pages 393--410. Amer. Math. Soc., Providence, RI, 1994.
\bibitem{KM} E. Kowalski and P. Michel, {\it The analytic rank of
$J_0(q)$ and zeros of automorphic $L$-functions}, Duke Math. 1999.
\bibitem{KM1} E. Kowalski and P. Michel, {\it Zeors of families of automorphic $L$-functions close to 1}, Pacific J. of Math., Vol. 207, No. 2, 2002.

\bibitem{Ku} N. V. Kuznetsov, {\it Petersson's Conjecture for Cusp
Forms of Weight Zero and Linnik's
Conjecture.} Math. USSR Sbornik 29 1981, 299-342.
\bibitem{Lindenstrauss}  E. Lindenstrauss, {\it Invariant measures and
arithmetic quantum unique ergodicity.} Ann. of Math. (2) 163 (2006),
no. 1,
165-219.
\bibitem{Lang} S. Lang, {\it $SL_2(\mathbb{R})$}, GTM, 105. 1985.
\bibitem{LY} J. Liu and Y. Ye, {\it Subconvexity for Rankin-Selberg $L$-functions of Maass Forms}. GAFA, 2002.

\bibitem{Luo0}W. Luo, Z. Rudnick and P. Sarnak, {\it The variance of
arithmetic measures associated to closed geodesics on the modular
surface},
ArXiv.
\bibitem{Luo} W. Luo, {\it Zeros of Hecke $L$-functions associated
with cusp forms.} Acta Arith. 71 (1995), no. 2, 139--158.
\bibitem{Luo6} W. Luo, {\it Values of symmetric square $L$-functions at
$1$,} J. Reine Angew. Math. 506 (1999) 215-235.
\bibitem{Luoweyl} W. Luo, {\it Nonvanishing of L-Values and the Weyl
Law}. Ann. of Math. 2001.
\bibitem{Luo7} W. Luo and P. Sarnak, {\it Quantum ergodicity of
eigenfunctions on $PSL_2(Z)\setminus
H$}, Inst. Hautes Etudes Sci. Publ. Math. No. 81 (1995), 207--237.
\bibitem{Luo2} W. Luo and P. Sarnak, {\it Quantum variance for Hecke
eigenforms}, 2004 Ann. Sci. Ecole Norm. Sup. (4) 37 (2004), no. 5,
769--799.
\bibitem{MV}
P.~Michel and A.~Venkatesh.
\newblock The subconvexity problem for $gl(2)$.
\newblock {\em Publications Math IHES}, 2009.
\bibitem{PSR} I. Piatetski-Shapiro and S. Rallis, {\it Rankin Triple
$L$-functions}, Compositio Math. 1987, 31-115.
\bibitem{Ra} M. Ratner, {\it The rate of mixing for geodesic and
horocycle flows,} Ergodic Theory Dynam. Sys. 7 (1987) 267-288.
\bibitem{c} M. Ratner, {\it The central limit theorem for geodesic
flows on $n$-dimensional manifolds of negative curvature} Israel J.
Math. 16 (1973) 181-197.
\bibitem{RS} Z. Rudnick and P. Sarnak, {\it The behavior of
eigenstates of hyperbolic manifolds, }Com. Math. Phys. 1994
\bibitem{Sarnak0}  P. Sarnak, {\it Arithmetic Quantum Chaos},
The Schur Lectures, 2003.
\bibitem{Sarnak1}  P. Sarnak, {\it Estimates for Rankin-Selberg
$L$-functions and
quantum unique ergodicity.} J. Funct. Anal. 184 (2001), no. 2,
419--453.
\bibitem{Sel} A. Selberg, Collected Papers, Vol. 1.
\bibitem{sou} K. Soundararajan {\it Quantum unique
ergodicity for $SL_2(Z)\setminus\mathbb{H}$}, Annals of Math. 2010.
\bibitem{ti} E. C. Titchmarsh, {\it The theory of the
Riemann zeta-function}, 2nd ed., Oxford University Press, 1986.
\bibitem{ven}
Akshay Venkatesh.
\newblock Sparse equidistribution problems, period bounds and subconvexity.
\newblock {\em Ann. of Math. (2)}, 172(2):989--1094, 2010.
\bibitem{Wat1} T. Watson, {\it Central Value of Rankin Triple
L-function for Unramified Maass Cusp Forms,} Princeton thesis, 2004.
\bibitem{W}
Michael Woodbury.
\newblock Trilinear forms and subconvexity of the triple product
  {$L$}-function.
\newblock available at http://www.math.columbia.edu/{$\sim$}woodbury/research/.
\bibitem{Zelditch} S. Zelditch, 
{\it Mean Lindelof hypothesis and equidistribution of cusp forms and Eisenstein series.} 
J. Funct. Anal. 97 (1991), no. 1, 1-49.
\bibitem{Zel} S. Zelditch, {\it On the rate of quantum
ergodicity}, Comm. Math. Phys. 160 (1994)81-92.
\bibitem{Ze} S. Zelditch, {\it Uniform distribution of eigenfunctions
on compact hyperbolic
surfaces}, Duke Math. J. 55(1987).
\bibitem{Z} P. Zhao, {\it Quantum Variance of Maass-Hecke Cusp Forms},
CMP, 2010.
\end{thebibliography}

\end{document}